\documentclass[11pt]{amsart}
\usepackage[utf8]{inputenc}

\usepackage{amsmath,amssymb,amsfonts,amsthm}
\usepackage{graphicx}
\usepackage[colorlinks=true]{hyperref}
\usepackage{enumerate}
\usepackage{tikz}

\numberwithin{equation}{section}
\newcommand{\R}{\mathbb{R}}
\newcommand{\N}{\mathbb{N}}
\newcommand{\Z}{\mathbb{Z}}
\newcommand{\C}{\mathbb{C}}

\newcommand{\W}{\mathcal{W}}
\newcommand{\CZ}{\mathcal{CZ}}

\newcommand{\beqnn}{\begin{eqnarray*}}
\newcommand{\eeqnn}{\end{eqnarray*}}
\newcommand{\beqn}{\begin{eqnarray}}
\newcommand{\eeqn}{\end{eqnarray}}
\newcommand{\beq}{\begin{equation}}
\newcommand{\eeq}{\end{equation}}

\theoremstyle{plain}
\newtheorem{thm}{Theorem}[section]
\newtheorem{prop}[thm]{Proposition}
\newtheorem{conj}[thm]{Conjecture}
\newtheorem{lem}[thm]{Lemma}
\newtheorem{cor}[thm]{Corollary}
\newtheorem{rmk}[thm]{Remark}
\newtheorem{defi}{Definition}[section]
\newtheorem{exm}{Example}[section]
\begin{document}

\title{Local Centralizer Rigidity near Elements of the Weyl Chamber Flow}
\author{Zhijing Wendy Wang}
\address{Yau Mathematical Sciences Centre, Tsinghua University, Beijing, China}
\email{zj-wang19@mails.tsinghua.edu.cn}
\address{Department of Mathematics, University of Chicago, Chicago, IL 60637, USA}
\email{zhijingw@uchicago.edu}

\maketitle
\begin{abstract}
In this paper, we prove centralizer rigidity near an element of the Weyl chamber flow on a semisimple Lie group. We show that a $C^1$ perturbation of an element of the Weyl chamber flow on a quotient $G/\Gamma$ of an $\R$-split, simple Lie group $G$ either has a centralizer of dimension $0$ or $1$, or is smoothly conjugate to an element of the Weyl chamber flow. We also obtain a general condition for the center-fixing centralizer of a partially hyperbolic diffeomorphism to be a Lie group.
\end{abstract}

\tableofcontents
\section{Introduction}

For a diffeomorphism $f\in \mathrm{Diff}^\infty(M)$ of a Riemannian manifold $M$, we define its $C^r$-centralizer, for $1\le r\le \infty$, to be
\[
\mathcal{Z}^r(f):=\{g\in \mathrm{Diff}^r(M): g\circ f=f\circ g\}.
\]
We say that a diffeomorphism has
\begin{itemize}
 \item \emph{trivial centralizer} if $\mathcal{Z}^\infty(f)=\langle f\rangle;$ 
 \item \emph{virtually trivial centralizer} if $\langle f\rangle$ is a finite-index subgroup of $\mathcal{Z}^\infty(f)$.
\end{itemize}

The study of smooth centralizers started with Smale \cite{smale}, who conjectured that centralizers of ``generic'' diffeomorphisms are trivial. The conjecture was partially answered in the affirmative in \cite{BCW}, where the authors showed that a residual set of $C^1$ diffeomorphisms of a compact smooth manifold has trivial centralizer.

Here, we are interested in the problem of describing the centralizer completely around an algebraic model and showing that the only reason for the centralizer to be non-trivial is that the diffeomorphism is very special, or, in other words, that ``big centralizer implies being algebraic.'' This is called the (local) \emph{centralizer rigidity} problem.

The centralizer rigidity problem can also be seen as a generalization of the rigidity of higher-rank actions under perturbation, which was studied by Katok--Spatzier \cite{KSpat}, Damjanović \cite{DK}, Vinhage \cite{Vinhage}, Wang \cite{VinWang}, and others. Instead of perturbing an entire action, the centralizer rigidity question asks whether the condition that a single element is perturbed is enough to establish rigidity of the whole action.

In this paper, we consider perturbations of affine diffeomorphisms on the quotient $G/\Gamma$ of a semisimple Lie group, for example, a diagonal matrix acting by left multiplication on $\mathrm{SL}_n\R/\Gamma$. Here we take $G$ to be a connected, $\R$-split semisimple Lie group with no compact factor. We also require $G$ to be \emph{genuinely higher rank}, meaning that each of its simple factors has rank at least $2$. We require $\Gamma$ to be a cocompact lattice in $G$.

We pick the diffeomorphisms to be elements of the Weyl chamber flow, which is an $\R^k$-action on $G/\Gamma$ given by left multiplication by elements of its Cartan subgroup. An element $f_0$ of the Weyl chamber flow is called \emph{generic} if the corresponding element in the Cartan subalgebra does not lie on the Weyl chamber walls. See Section~\ref{semisimple} for more detailed definitions of this terminology.

In the simplest example where $G=\mathrm{SL}_3\R$, the Weyl chamber flow is just the $\R^2$-action given by left multiplication by diagonal matrices
\[
\begin{pmatrix}
 e^{t_1} &&\\ &e^{t_2}&\\ &&e^{t_3}
\end{pmatrix},
\qquad t_1,t_2,t_3\in \R,\; t_1+t_2+t_3=0,
\]
on $\mathrm{SL}_3\R/\Gamma$. In this case, an element
\[
f_0=\begin{pmatrix}
 e^{t_1} &&\\ &e^{t_2}&\\ &&e^{t_3}
\end{pmatrix}
\]
is generic if and only if $t_i\neq t_j$ for any $1\le i<j\le 3$.

The centralizer $\mathcal{Z}^\infty(f_0)$ of a generic element $f_0$ in the Weyl chamber flow contains $\R^k$, since it commutes with every other element of the Weyl chamber flow. We show that if $G$ is simple, then for a $C^1$ perturbation of $f_0$, the diffeomorphism either has a centralizer of dimension at most $1$, or is smoothly conjugate to an element of the Weyl chamber flow. We note that for an $\R$-split Lie group, an open dense subset of the elements of the Weyl chamber flow is generic.

\begin{thm}\label{cenrig}
Let $X=G/\Gamma$, where $G$ is a connected, $\R$-split simple Lie group of rank at least $2$ with finite center, and $\Gamma$ is a cocompact lattice in $G$.

Let $f_0: X\to X$ be a generic element of the Weyl chamber flow on $G/\Gamma$. Then for any $C^\infty$ diffeomorphism $f:X\to X$ that is a $C^1$-small perturbation of $f_0$, the smooth centralizer $\mathcal{Z}^\infty(f)$ is
\begin{itemize}
 \item either a (not necessarily connected) Lie group of dimension $0$ or $1$,
 \item or virtually $\R^k$, where $k$ is the rank of $G$.
\end{itemize}
In the latter case, $f$ is $C^\infty$-conjugate to an element of the Weyl chamber flow.

\end{thm}

In the case that $G$ is semisimple, the condition that the centralizer has dimension at least $2$ is not enough to ensure that $f$ is essentially affine. For example, take
\[
X=(G_1/\Gamma_1)\times (G_2/\Gamma_2),
\]
and let $f_0=(a_1,a_2)$, where $a_i$ is a generic element of the Weyl chamber flow on $G_i/\Gamma_i$. Then we may take a twisted perturbation
\[
f: X\to X,\qquad (x_1,x_2)\mapsto (a(x_2)\cdot x_1,\, a_2 x_2),
\]
where $a:G_2/\Gamma_2\to D(G_1)$ is a $C^1$ perturbation of the constant map $a_1$. Under this construction, the centralizer $\mathcal{Z}^\infty(f)$ has dimension $\mathrm{rank}(G_1)+1>2$. Instead, we prove the following centralizer rigidity result for semisimple Lie groups.

\begin{thm}\label{cenrigsemi}
Let $X=G/\Gamma$, where $G$ is a connected, $\R$-split semisimple Lie group with finite center, and $\Gamma$ is a cocompact lattice in $G$. Suppose the Lie algebra $\mathfrak{g}=\oplus \mathfrak g_i$, where each simple component $\mathfrak g_i$ has real rank $k_i\ge 2$
and the rank of $G$ is $k=\sum_i k_i$.

Let $f_0: X\to X$ be a generic element of the Weyl chamber flow on $G/\Gamma$. Then for any $C^\infty$ diffeomorphism $f:X\to X$ that is a $C^1$-small perturbation of $f_0$, the smooth centralizer $\mathcal{Z}^\infty(f)$ is
\begin{itemize}
 \item either a (not necessarily connected) Lie group of dimension at most $k-\min_ik_i+1$,
 \item or virtually $\R^k$, where $k$ is the rank of $G$.
\end{itemize}
In the latter case, $f$ is $C^\infty$-conjugate to an element of the Weyl chamber flow.
\end{thm}

An ingredient in the proofs of Theorems~\ref{cenrig} and~\ref{cenrigsemi} is a result of independent interest, which we state as Theorem~\ref{czlieg} in Section~\ref{cenfix}. It shows that for a ``nice enough'' partially hyperbolic diffeomorphism, the center-fixing centralizer is always a Lie group.

One application of Theorem~\ref{czlieg} is that we remove the volume-preserving assumptions in the centralizer rigidity results for discretized Anosov flows in \cite{DWX} and \cite{BG}; see Theorems~\ref{1dcen1} and~\ref{1dcen2}.

\subsection{Some historical remarks and further questions}

The notion of centralizer rigidity for partially hyperbolic diffeomorphisms was first introduced by Damjanović, Wilkinson, and Xu \cite{DWX}, where the authors studied perturbations of discretized geodesic flows and affine toral automorphisms. In particular, they showed that a volume-preserving perturbation of a discretized geodesic flow of a negatively curved locally symmetric manifold either has virtually trivial centralizer or embeds into a smooth flow.
Gan et al.\ \cite{GSXZ} proved that a partially hyperbolic diffeomorphism on the torus $\mathbb{T}^3$ homotopic to an Anosov linear map either has virtually trivial centralizer or is smoothly conjugate to a linear map.
Damjanović, Wilkinson, and Xu \cite{DWX23} later established centralizer rigidity for some automorphisms on nilmanifolds.

Most of the work in this direction, except for the results on discretized geodesic flows, has been carried out in the setting of nilmanifolds, especially tori. In this paper, we extend the landscape to $\R$-split simple Lie groups. This is also the first result establishing centralizer rigidity for partially hyperbolic systems with center leaves of arbitrary dimension; all previous results were for systems with center dimension $1$ or $2$.

In a recent work by Damjanović, Wilkinson, Wu, and Xu \cite{DWWX}, the authors made the following conjectures on centralizer rigidity of an affine diffeomorphism $f_0=L_a\circ \Psi$ acting on a homogeneous space $X=G/\Gamma$, where $G$ is a Lie group, $\Gamma<G$ is a cocompact discrete subgroup, $L_a$ is left multiplication by some $a\in G$, and $\Psi$ is a $G$-automorphism preserving $\Gamma$.

\begin{conj}[Conjecture 1, \cite{DWWX}]\label{DWXconj1}
Let $f_0:X\to X$ be an affine diffeomorphism that satisfies the $K$-property. Then for every $f\in \mathrm{Diff}^\infty(X)$ sufficiently $C^1$-close to $f_0$, the centralizer $\mathcal{Z}^\infty(f)$ is a $C^0$-closed Lie subgroup of $\mathrm{Diff}^\infty(X)$.
\end{conj}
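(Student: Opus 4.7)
The plan is to deduce the conjecture from Theorem~\ref{czlieg} by treating the full centralizer as an extension whose kernel is the center-fixing part and whose quotient acts on the leaf space of the center foliation.

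First I would verify that $f_0 = L_a \circ \Psi$ is partially hyperbolic, dynamically coherent, and plaque-expansive. The derivative of $f_0$ is conjugate to $\mathrm{Ad}(a)\circ d\Psi_e$ acting on $\mathfrak{g}$, and the $K$-property forces nontrivial expanding and contracting subspaces; the center subspace (eigenvalues of modulus $1$) integrates to a smooth $f_0$-invariant foliation whose leaves are orbits of an algebraic subgroup $C\leq G$. Hirsch--Pugh--Shub theory then gives that every $C^1$-small $C^\infty$ perturbation $f$ remains partially hyperbolic and admits a unique $f$-invariant center foliation $\mathcal{W}^c_f$, $C^0$-close to $\mathcal{W}^c_{f_0}$. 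With these structures in hand I would verify the hypotheses of Theorem~\ref{czlieg} and conclude that the center-fixing centralizer $\mathcal{Z}^\infty_c(f) := \{g \in \mathcal{Z}^\infty(f): g \text{ preserves each leaf of } \mathcal{W}^c_f\}$ is a $C^0$-closed Lie subgroup of $\mathrm{Diff}^\infty(X)$.

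To promote this to the full centralizer, I would consider the natural homomorphism $\rho : \mathcal{Z}^\infty(f) \to \mathrm{Homeo}(X/\mathcal{W}^c_f)$, whose kernel is exactly $\mathcal{Z}^\infty_c(f)$. The image acts on the leaf space equipped with induced (H\"older) stable and unstable foliations, and a Montgomery--Zippin type argument applied to this action---using that centralizer elements must intertwine the hyperbolic holonomies of $\mathcal{W}^s_f$ and $\mathcal{W}^u_f$---should identify $\rho(\mathcal{Z}^\infty(f))$ with a finite-dimensional Lie group. Patching this Lie structure onto the Lie structure of the kernel, and tracking $C^0$-closedness through both, would yield that $\mathcal{Z}^\infty(f)$ is itself a $C^0$-closed Lie subgroup of $\mathrm{Diff}^\infty(X)$.

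The principal obstacle I anticipate is controlling $\rho(\mathcal{Z}^\infty(f))$ in the absence of additional abelian symmetry. In the Weyl chamber flow setting treated in this paper the center of $f_0$ is a maximal $\R$-split torus and the ambient higher-rank action provides strong external rigidity; for a general affine $K$-diffeomorphism the center can be a nonabelian nilpotent subgroup and the leaf space $X/\mathcal{W}^c_f$ is typically not a manifold, so one must work with holonomy pseudogroups or groupoid models and extract Lie structure purely from the single diffeomorphism $f$. A secondary technical point is that the $C^0$-closure must be verified in $\mathrm{Diff}^\infty(X)$ itself rather than in a weaker topology, which will require uniform regularity estimates on elements of $\mathcal{Z}^\infty(f)$ of the sort produced by Journ\'e's regularity lemma applied along the stable/unstable holonomies.
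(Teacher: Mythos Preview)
The statement you are attempting is Conjecture~\ref{DWXconj1}, which the paper does \emph{not} prove in general; it is stated as an open conjecture of Damjanovi\'c--Wilkinson--Xu. The paper establishes only the special case where $f_0$ is a generic element of the Weyl chamber flow on $X=G/\Gamma$ with $G$ an $\R$-split, genuinely higher-rank semisimple Lie group (Theorems~\ref{cenrig} and~\ref{cenrigsemi}). In that restricted setting the paper's scheme matches the first half of your outline: Theorem~\ref{czlieg} handles the center-fixing part $\CZ^r(f)$, and the quotient $\mathcal{Z}^r(f)/\CZ^r(f)$ is controlled not by a Montgomery--Zippin argument on the leaf space but by Witte's topological superrigidity (Theorem~\ref{witte}), which forces the quotient to be \emph{finite}. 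The Lie structure on $\mathcal{Z}^r(f)$ then comes trivially from that on the finite-index subgroup $\CZ^r(f)$.

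Your proposal, read as an attack on the full conjecture, has two genuine gaps. First, the hypotheses of Theorem~\ref{czlieg}---accessibility, center $r$-bunching, narrow band spectrum, global holonomy on some center leaf---are not consequences of the $K$-property for a general affine $f_0$; narrow band spectrum in particular is a restrictive condition that can fail even on nilmanifolds, and without it the normal-form bootstrap used to promote $C^0$ limits in $\CZ_*(f)$ to $C^r$ maps breaks down. Second, your treatment of the quotient is where the real difficulty lies and your sketch does not close it: the leaf space $X/\W^c_f$ is not Hausdorff in general, the holonomy groupoid carries no obvious Lie structure, and there is no known analogue of Witte's theorem for arbitrary homogeneous spaces. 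The paper's success in the Weyl chamber case depends essentially on higher-rank lattice rigidity to force the quotient to be finite; absent that, showing $\rho(\mathcal{Z}^\infty(f))$ is even finite-dimensional is exactly the open problem. You correctly identify this as the principal obstacle, but the Montgomery--Zippin sketch you give does not supply a mechanism to overcome it.
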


\begin{conj}[Conjecture 2, \cite{DWWX}]\label{DWXconj3}
Let $f_0:X\to X$ be an affine diffeomorphism that satisfies the $K$-property, and suppose $\mathcal{Z}^\infty(f_0)$ has no rank-$1$ factor. Then for every $f\in \mathrm{Diff}^\infty(X)$ sufficiently $C^1$-close to $f_0$, if the centralizer $\mathcal{Z}^\infty(f)$ has no rank-$1$ factor, then $f$ is smoothly conjugate to an affine diffeomorphism.
\end{conj}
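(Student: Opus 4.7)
The plan is to reduce the conjecture to an application of higher-rank abelian rigidity for partially hyperbolic actions on homogeneous spaces, using Theorem~\ref{czlieg} as the key structural input. Because $f_0$ is affine with the $K$-property, $f_0$ is partially hyperbolic with a well-controlled spectral decomposition, and any $C^1$-small perturbation $f$ inherits a center foliation tangent to a distribution close to the center of $f_0$. My first move would be to verify, in the generality of the conjecture, that the hypothesis of Theorem~\ref{czlieg} applies to $f$, so that $\mathcal{Z}^\infty(f)$ is a (possibly disconnected) Lie group $H$; in the cases studied in this paper this is already available, and I would try to reuse the same accessibility and normal hyperbolicity input.

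Second, I would extract from the hypothesis ``no rank $1$ factor'' a connected abelian subgroup $A \subset H$ of rank $\geq 2$. This yields a higher-rank $C^\infty$ abelian action on $X$ that commutes with $f$, and which is a $C^1$-small perturbation of a subaction of the algebraic centralizer $\mathcal{Z}^\infty(f_0)$. At this point I would invoke the smooth local rigidity results for genuinely higher-rank affine abelian actions on homogeneous spaces, due to Katok--Spatzier, Damjanovi\'c--Katok, and Vinhage--Wang, to produce a $C^\infty$ conjugacy $h$ between the $A$-action and its algebraic model. Because $f$ commutes with $A$, the conjugated diffeomorphism $h\circ f\circ h^{-1}$ commutes with the model $A$-action, and a standard centralizer computation for affine $A$-actions on $G/\Gamma$ then identifies $h\circ f\circ h^{-1}$ with an affine diffeomorphism, completing the argument.

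The main obstacle is the breadth of the conjecture. The paper establishes centralizer rigidity only for generic elements of the Weyl chamber flow, where $f_0=L_a$ is a pure left-translation by a semisimple element $a$ on a quotient of an $\R$-split semisimple Lie group. To treat an arbitrary affine $f_0=L_a\circ\Psi$ on an arbitrary $G/\Gamma$, one must handle (a) the unipotent part of $a$, which produces polynomially growing drift in the center direction and obstructs a clean partially hyperbolic picture; (b) compact and non-split torus directions, which give isometric center subbundles whose centralizer need not be captured by the hyperbolicity-based arguments used here; and (c) the automorphism $\Psi$, which can twist the action in a way not aligned with any Weyl chamber flow, so that the algebraic model of the $A$-action is not a standard Weyl chamber flow but a more general affine higher-rank action. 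Extending Theorem~\ref{czlieg} and the higher-rank smooth rigidity results to cover all three phenomena simultaneously appears to be the essential content of the conjecture, and I would expect each of these extensions to require a substantial new argument rather than a direct adaptation of the tools developed here.
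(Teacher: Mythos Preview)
The statement is a \emph{conjecture} quoted from \cite{DWX24}; the paper does not prove it in general. It establishes only the special case where $X=G/\Gamma$ with $G$ simple, $\R$-split, of rank $2$, and $f_0$ a generic element of the Weyl chamber flow (see the paragraph following Conjecture~\ref{DWXconj3} and the ``Furthermore'' clause of Theorem~\ref{cenrig}). The paper explicitly leaves the general semisimple case open. Your last paragraph is therefore accurate in spirit: extending beyond this setting would require genuinely new ideas.

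There is, however, a substantive gap in your main strategy even in the cases the paper does treat. You write that the rank~$\ge 2$ abelian subgroup $A\subset\mathcal{Z}^\infty(f)$ gives ``a higher-rank $C^\infty$ abelian action \dots\ which is a $C^1$-small perturbation of a subaction of the algebraic centralizer $\mathcal{Z}^\infty(f_0)$''. This is not available: only $f$ itself is assumed $C^1$-close to $f_0$; an arbitrary $g\in\mathcal{Z}^\infty(f)$ need not be $C^1$-close to any affine map. One cannot invoke the Katok--Spatzier / Damjanovi\'c--Katok / Vinhage--Wang local rigidity theorems directly, because those require $C^1$ proximity of the entire action. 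This is precisely the difficulty the paper spends Sections~\ref{dichcen}--\ref{wcf} overcoming: Proposition~\ref{z2:dtof0} and Corollary~\ref{trorf} show that $g$ is close to a translation only in a coarse $C^0$ sense along center leaves; Section~\ref{eph} then shows that the resulting $\Z^2$-action is a \emph{topological perturbation} (Definition~\ref{topper}) of a higher-rank restriction of the Weyl chamber flow, not a $C^1$ perturbation; and Section~\ref{wcf} re-runs the cocycle rigidity argument of \cite{VinWang} under this weaker hypothesis. Your proposal skips this entire mechanism.

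A second, smaller gap: extracting a rank~$\ge 2$ abelian subgroup from ``$\mathcal{Z}^\infty(f)$ has no rank~$1$ factor'' is not automatic and is itself a nontrivial step; in the rank~$2$ case the paper uses volume preservation, ergodicity, and the rotation-number argument of Corollary~\ref{rank2} to force the existence of a $g$ in a genuinely different direction from $f$.
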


\begin{conj}[Conjecture 3, \cite{DWWX}]\label{DWXconj2}
Let $f_0:X\to X$ be an affine diffeomorphism that satisfies the $K$-property, and suppose $\mathcal{Z}^\infty(f_0)$ has no rank-$1$ factor. Then for every $f\in \mathrm{Diff}^\infty(X)$ sufficiently $C^1$-close to $f_0$, if the centralizer $\mathcal{Z}^\infty(f)\simeq \mathcal{Z}^\infty(f_0)$, then $f$ is smoothly conjugate to an affine diffeomorphism.
\end{conj}

Here, a group action is said to have a rank-$1$ factor if it admits a group action of $\Z$ or $\R$, or their compact extensions, as a factor. The simplest example of a group action with a rank-$1$ factor is when the group is virtually $\Z$ or $\R$. If Smale's conjecture is true, then a generic perturbation of a diffeomorphism would have trivial centralizer and hence a centralizer with a rank-$1$ factor.

Theorems~\ref{cenrig} and~\ref{cenrigsemi} give an affirmative answer to Conjectures~\ref{DWXconj1} and~\ref{DWXconj2} for perturbations of generic elements of the Weyl chamber flow on quotients of $\R$-split semisimple Lie groups $G/\Gamma$. They give strong evidence toward Conjecture~\ref{DWXconj3} for simple $\R$-split Lie groups.

It remains an open question whether Conjecture~\ref{DWXconj3} holds for quotients of $\R$-split semisimple Lie groups. For example, in Theorem~\ref{cenrigsemi}, if $\Gamma$ is an irreducible lattice and the centralizer of $f$ has dimension at least $2$, is $f$ necessarily conjugate to an element of the Weyl chamber flow?

Some of the methods in this paper can also be extended to twisted Weyl chamber flows, non-split semisimple Lie groups, and non-generic elements of the Weyl chamber flow, assuming larger centralizers in the latter two cases. These results will appear in separate papers.

\subsection{Organization and outline of the proof}\label{outl}

In Section~\ref{prelim}, we provide some preliminaries on basic Lie theory and partially hyperbolic dynamics.

Let $f_0$ be a generic element of the Weyl chamber flow, and let $f$ be a $C^1$-small perturbation as in Theorem~\ref{cenrig}. The action of $f_0$ fixes each coset of the Cartan subgroup, which we call the center leaves of $f_0$. Normal to the directions of the Cartan subgroup, we show that $f_0$ acts by contraction or expansion. These properties characterize $f_0$ as being normally hyperbolic. Furthermore, the action of $f_0$ restricted to the Cartan subgroup is simply a translation map. See Section~\ref{semisimple} for more details.

A classical result in partially hyperbolic dynamics (see Theorem~\ref{leafconj}) shows that a $C^1$ perturbation $f$ of $f_0$ also fixes each leaf of a foliation $\W^c_f$ and acts by contraction and expansion normal to that foliation. Let $g\in \mathcal{Z}^r(f)$ be any diffeomorphism commuting with $f$. It is not hard to show that $g$ must also preserve the foliation $\W^c_f$. Therefore, we may map $g\in \mathcal{Z}^r(f)$ to its action on the leaf space of $\W^c_f$, yielding a short exact sequence
\[
1\to \CZ^r(f)\to \mathcal{Z}^r(f)\to \mathcal{Z}^r(f)/\CZ^r(f)\to 1.
\]

Here, we call the kernel
\[
\CZ^r(f):=\{g\in \mathrm{Diff}^r(M): f\circ g=g\circ f,\; g(\W_f^c(x))=\W_f^c(x)\ \forall\, x\in M\}
\]
the center-fixing $C^r$ centralizer of $f$. The quotient space $\mathcal{Z}^r(f)/\CZ^r(f)$ is the space of the action of $\mathcal{Z}^r(f)$ on the leaf space $X/\W^c_f$.

In Section~\ref{cenfix}, we prove that the center-fixing centralizer $\CZ^r(f)$ is a Lie group under a more general condition. Applying this result to discretized Anosov flows, we remove the volume-preserving assumptions in \cite{DWX} and \cite{BG}.

We then begin the proofs of Theorems~\ref{cenrig} and~\ref{cenrigsemi}.

In Section~\ref{centgp}, we prove that in our setting the space of actions on the leaf space, $\mathcal{Z}^r(f)/\CZ^r(f)$, must be finite. Therefore, $\mathcal{Z}^r(f)$ is virtually a Lie group.

In Section~\ref{dichcen}, we prove that elements of $\CZ^r(f)$ also act ``like translations'' on the center leaves, similar to the way $f_0$ acts on the cosets of the Cartan subgroup. Using this fact and the fact that $\CZ^r(f)$ is a Lie group, we conclude that the centralizer either contains two diffeomorphisms that act like translations in different directions, or has dimension at most $1$ when $G$ is simple.

In Section~\ref{eph}, we prove that in the case where the centralizer contains two diffeomorphisms that act ``like translations'' in different directions, $\CZ^r(f)$ must contain a higher-rank action that is ``topologically close'' to a partially hyperbolic one. In particular, we show that suitable elements of the centralizer have the same type of contraction and expansion as elements of the Weyl chamber flow along suitable stable and unstable foliations; and they have extremely slow contraction and expansion rate along the center foliation.

In Section~\ref{wcf}, we prove that this higher-rank action must be conjugate to an algebraic one, following the methods of \cite{DK} and \cite{VinWang}.

\subsection{Acknowledgments}

The author would like to thank her advisor, Amie Wilkinson, for many useful discussions, guidance, and thorough revision of this work. The author also thanks Disheng Xu, Danijela Damjanović, Sven Sandfeldt and Kurt Vinhage for illuminating conversations, and thanks Aaron Brown and her advisor Jinxin Xue for many useful comments. The author thanks the anonymous referee for many helpful comments and suggestions that greatly improved the quality of this paper.

\section{Preliminaries}\label{prelim}

\subsection{Semisimple Lie groups and Weyl chamber flows}\label{semisimple}
Let $G$ be a $\R$-split semisimple Lie group with Lie algebra $\mathfrak{g}$. 

A \emph{Cartan subalgebra} $\mathfrak{h}$ of $\mathfrak{g}$ is a maximal commutative subalgebra of $\mathfrak{g}$. We also denote by $D=\exp(\mathfrak{h})$ the corresponding (identity component of the) maximal abelian subgroup, called the Cartan subgroup of $G$. For $G=\mathrm{SL}_n\R$, $D$ is conjugate to the subgroup that consists of the diagonal matrices.

Restricting the adjoint action to $\mathfrak{h}$, $\mathrm{ad}|_{\mathfrak{h}}$ decomposes the Lie algebra $\mathfrak{g}$ into eigenspaces
\[
\mathfrak{g}=\bigoplus_{\lambda\in \mathfrak{h}^*}\mathfrak{g}_\lambda,
\]
where each eigenspace is given by
\[
\mathfrak{g}_\lambda=\{x\in \mathfrak{g}:\ \mathrm{ad}(h)x={\lambda(h)}x,\ \forall\, h\in \mathfrak{h}\}.
\]

We denote by
\[
\Phi=\{\lambda\in \mathfrak{h}^*:\ \mathfrak{g}_\lambda\neq 0,\ \lambda\neq 0\}\subset \mathfrak{h}^*
\]
the set of non-zero eigenvalues of the adjoint representation $\mathfrak{h}\to \mathfrak{gl}(\mathfrak{g})$, and we call $\Phi$ the \emph{root system} of $G$.

The eigenvalues, which are linear functions on $\mathfrak{h}$, separate $\mathfrak{h}\simeq \R^k$ into different Weyl chambers. The kernels of the eigenvalues $\ker(\lambda)\subset \mathfrak{h}$, $\lambda\in \Phi$, are called the \emph{Weyl chamber walls} in $\mathfrak{h}$, and the connected components of
\[
\mathfrak{h}\setminus \bigcup_{\lambda\in \Phi}\ker(\lambda)
\]
are called the \emph{Weyl chambers}. An element $h\in \mathfrak{h}$ is called \emph{generic} if it does not lie on any of the Weyl chamber walls.

Let $\Gamma$ be a discrete subgroup of $G$. The Weyl chamber flow on $G/\Gamma$ is the action
\[
\mathfrak{h}\to \mathrm{Diff}^\infty(G/\Gamma),\qquad v\mapsto L_{\exp(v)},
\]
where $L_{\exp(v)}$ denotes left multiplication by $\exp(v)$ on $G/\Gamma$.

A simple and concrete example is given by $G=\mathrm{SL}_3\R$.

\begin{exm}
Let $G=\mathrm{SL}_3\R$. Then the Lie algebra is
\[
\mathfrak{sl}_3\R=\{A\in M_{3\times 3}(\R): \mathrm{tr}\,A=0\}.
\]

One choice of a maximal diagonalizable subalgebra is
\[
\mathfrak{h}=\left\{
\begin{pmatrix}
 t_1 & &\\
 & t_2&\\
 && t_3
\end{pmatrix}:\ t_1+t_2+t_3=0,\ t_1,t_2,t_3\in \R
\right\}.
\]

The eigenspaces of the adjoint action are
\[
\mathfrak{g}_{ij}=\R e_{ij},\qquad 1\le i\neq j\le 3,
\]
where $e_{ij}$ is the matrix with $1$ in the $(i,j)$-entry and zeros elsewhere. The corresponding eigenvalues $\lambda_{ij}$ are given by
\[
\lambda_{ij}\!\left(
\begin{pmatrix}
 t_1 & &\\
 & t_2&\\
 && t_3
\end{pmatrix}
\right)=t_i-t_j.
\]

The Weyl chamber walls are the three lines $\{t_i=t_j\}$ in the Cartan subalgebra
\[
\mathfrak{h}\simeq \{(t_1,t_2,t_3)\in \R^3:\ t_1+t_2+t_3=0\}\simeq \R^2,
\]
which partition $\mathfrak{h}$ into six Weyl chambers.
\end{exm}

Now consider the action of left multiplication by $f_0=L_{\exp(v)}$ on $G$, where $v$ is a generic element of $\mathfrak{h}$ (that is, $v$ does not lie on any Weyl chamber wall). Let $G_\lambda=\exp(\mathfrak{g}_\lambda)$ be the subgroup generated by the eigenspace $\mathfrak{g}_\lambda$. Then $f_0$ preserves the foliation given by cosets of $G_\lambda$. We shall see that $f_0$ also expands or contracts the cosets of $G_\lambda$ exponentially.

\begin{prop}
For any $x,y$ in the same coset of $G_\lambda$, the distance between $x$ and $y$ is contracted or expanded exponentially under iteration of $f_0$, depending on the sign of $\lambda(v)$.
\end{prop}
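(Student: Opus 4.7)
The plan is a direct algebraic calculation exploiting the eigenvalue property $\mathrm{ad}(v)w=\lambda(v)w$, combined with a uniform comparison between Lie-algebra displacements and the Riemannian distance on the compact quotient. First I would lift $x,y\in G/\Gamma$ to $G$. Since $G$ is $\R$-split semisimple, each root space $\mathfrak{g}_\lambda$ is one-dimensional and in particular abelian, so $\exp\colon\mathfrak{g}_\lambda\to G_\lambda$ is a diffeomorphism onto the connected subgroup $G_\lambda$. The hypothesis that $x,y$ lie in the same $G_\lambda$-coset then lets me write $y=\exp(w)\cdot x$ for a unique $w\in\mathfrak{g}_\lambda$.

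The key identity is the conjugation formula
\[
f_0^n(y)\;=\;\exp(nv)\exp(w)x\;=\;\bigl[\exp(nv)\exp(w)\exp(-nv)\bigr]\cdot f_0^n(x)\;=\;\exp\!\bigl(\mathrm{Ad}(\exp(nv))\,w\bigr)\cdot f_0^n(x).
\]
Using $\mathrm{Ad}(\exp(nv))\,w=e^{n\,\mathrm{ad}(v)}w=e^{n\lambda(v)}w$, this simplifies to
\[
f_0^n(y)\;=\;\exp\!\bigl(e^{n\lambda(v)}\,w\bigr)\cdot f_0^n(x),
\]
so at the Lie-algebra level each iterate of $f_0$ simply rescales the displacement parameter $w$ along the $G_\lambda$-leaf by the factor $e^{n\lambda(v)}$.

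To convert this into a metric statement I would fix a smooth Riemannian metric on $G/\Gamma$ and a norm $|\cdot|$ on $\mathfrak{g}$. Compactness of $G/\Gamma$ and smoothness of $(p,u)\mapsto\exp(u)\cdot p$ yield constants $C\ge 1$ and $\delta>0$ with
\[
C^{-1}|u|\;\le\;d_{G/\Gamma}\!\bigl(p,\exp(u)\cdot p\bigr)\;\le\;C|u|\qquad\text{for all }p\in G/\Gamma\text{ and }|u|<\delta.
\]
Applying this at $u=w$ and at $u=e^{n\lambda(v)}w$ gives $d(f_0^n(x),f_0^n(y))\asymp e^{n\lambda(v)}\,d(x,y)$, which is the asserted exponential expansion when $\lambda(v)>0$ and contraction when $\lambda(v)<0$. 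The only mild obstacle is the injectivity-radius restriction $|e^{n\lambda(v)}w|<\delta$: for $\lambda(v)<0$ it is automatic once $n$ is large, while for $\lambda(v)>0$ it is satisfied for $n$ in a range controlled by $|w|$. In either case the essential content is the infinitesimal statement that $df_0$ scales the tangent direction $\mathfrak{g}_\lambda$ by $e^{\lambda(v)}$, which is what the sequel uses to present $f_0$ as a normally hyperbolic diffeomorphism with the maximal-torus orbits as center leaves.
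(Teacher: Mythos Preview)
Your proposal is correct and follows essentially the same approach as the paper: both compute $f_0^n(y)f_0^n(x)^{-1}$ via the conjugation identity $\mathrm{Ad}(\exp(nv))w=e^{n\lambda(v)}w$ to see that the Lie-algebra displacement along $\mathfrak{g}_\lambda$ scales by $e^{n\lambda(v)}$. Your treatment is in fact more careful than the paper's, which stops at the algebraic identity and does not spell out the metric comparison or injectivity-radius caveat.
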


\begin{proof}
Let $x,y\in G$ lie in the same coset of $G_\lambda$. Then there exists $u\in \mathfrak{g}_\lambda$ such that $x=\exp(u)y$. We compute
$$
f_0^n(x)f_0^n(y)^{-1}
=\exp(nv)\, x y^{-1}\, \exp(-nv)
=\exp(\mathrm{Ad}(\exp(nv))u)
$$$$=\exp(\exp(\mathrm{ad}(nv))u)
=\exp(\exp(n\lambda(v))u).
$$
Therefore, under iteration of $f_0$, the distance between $x$ and $y$ in the same orbit of $G_\lambda$ is contracted or expanded exponentially, depending on the sign of $\lambda(v)$.
\end{proof}

We also note that since $\exp(v)$ commutes with the Cartan subgroup $D$, the map $f_0$ acts on each coset of $D$ by a translation, neither expanding nor contracting the cosets of $D$.

As we shall see later, these behaviors exactly characterize those of a normally hyperbolic diffeomorphism.

\subsection{Partial hyperbolicity and regularity}\label{prel:ph}
We recall the definitions of dominated splitting, partial hyperbolicity, and some results from \cite{HPS} on leaf conjugacy.

\begin{defi}[Dominated Splitting]
Let $M$ be a Riemannian manifold, and let $f:M\to M$ be a diffeomorphism. A \emph{dominated splitting} of $f$ is a decomposition
\[
TM=E^1\oplus E^2\oplus \cdots \oplus E^k
\]
that satisfies the following conditions:
\begin{itemize}

\item the splitting $TM=E^1\oplus E^2\oplus \cdots \oplus E^k$ is $Df$-invariant, i.e.\ $Df(E^i(x))=E^i(f(x))$ for any $x\in M$;

\item there exist constants $0<\lambda_i<1$ and $C>0$ such that for any $n\in \N$ and $x\in M$,
\[
\|Df^n(x)v\|\le C\lambda_i^n \|Df^n(x)u\|,
\]
for any $v\in E^i(x)$ and $u\in E^{i+1}(x)$ with $\|u\|=\|v\|=1$, and any $1\le i\le k-1$.
\end{itemize}
\end{defi}

\begin{defi}[Partially Hyperbolic Diffeomorphism]
A diffeomorphism $f:M\to M$ of a Riemannian manifold $M$ is \emph{partially hyperbolic} if $f$ has a dominated splitting
\[
TM=E^s\oplus E^c\oplus E^u,
\]
and there exist constants $0<\mu<1$ and $C>0$ such that for any $n\in \N$ and $x\in M$,
\[
\|Df^n(x)v\|\le C\mu^n\|v\| \quad \text{for any } v\in E^s(x),
\]
and
\[
\|Df^{-n}(x)u\|\le C\mu^n\|u\| \quad \text{for any } u\in E^u(x).
\]

In this paper, we also assume that the bundles $E^s$ and $E^u$ are both nontrivial.
\end{defi}

In general, for a partially hyperbolic diffeomorphism, the stable and unstable bundles $E^s$ and $E^u$ are uniquely integrable to $f$-invariant stable and unstable foliations $\W^s$ and $\W^u$. However, $E^c$ is generally not tangent to a foliation. A partially hyperbolic diffeomorphism is normally hyperbolic if there exists an invariant center foliation tangent to $E^c$.

\begin{defi}[Normally hyperbolic]
A partially hyperbolic diffeomorphism $f:M\to M$ is said to be \emph{normally hyperbolic} with respect to a foliation $\mathcal{F}$ if $f$ preserves $\mathcal{F}$ and $T\mathcal{F}=E^c$.

For $r\ge 1$, we say that $f$ is \emph{$r$-normally hyperbolic} if it is normally hyperbolic and there exists $k\in \N$ such that
\[
\sup_{x\in M}\|D_xf^k|_{E^s}\|\cdot \|(D_xf^k|_{E^c})^{-1}\|^r<1
\]
and
\[
\sup_{x\in M}\|(D_xf^k|_{E^u})^{-1}\|\cdot \|D_xf^{k}|_{E^c}\|^r<1.
\]
\end{defi}

We note that $1$-normal hyperbolicity is equivalent to normal hyperbolicity, and that $r$-normal hyperbolicity is a $C^1$-open condition.

For a normally hyperbolic diffeomorphism $(f,\mathcal{F})$, heuristically, we may view $f$ as being Anosov on the leaf space $M/\mathcal{F}$. Therefore, we may expect structural stability of $f$ up to the leaves of $\mathcal{F}$. Under suitable conditions, we may expect perturbations of $f$ to be $C^0$ conjugate to $f$ modulo the leaves of $\mathcal{F}$, and we say that such a perturbation is leaf conjugate to $f$. We now define the notion of leaf conjugacy.

\begin{defi}[Leaf conjugacy]
Suppose $(f,\mathcal{F}_f)$ and $(g,\mathcal{F}_g)$ are two diffeomorphisms of $M$ with invariant foliations $\mathcal{F}_f$ and $\mathcal{F}_g$, respectively.

Then $(f,\mathcal{F}_f)$ and $(g,\mathcal{F}_g)$ are said to be \emph{leaf conjugate} via a leaf conjugacy $h\in \mathrm{Homeo}(M)$ if
\[
h(\mathcal{F}_f(x))=\mathcal{F}_g(h(x)) \quad \text{and} \quad
h(f(\mathcal{F}_f(x)))=g(\mathcal{F}_g(h(x)))
\]
for any $x\in M$.
\end{defi}

A notion slightly stronger than normal hyperbolicity is dynamical coherence.

\begin{defi}[Dynamically Coherent]
A partially hyperbolic diffeomorphism $f$ is said to be \emph{dynamically coherent} if there exist two $f$-invariant foliations $\W^{cs}$ and $\W^{cu}$ that are tangent to $E^{cs}=E^c\oplus E^s$ and $E^{cu}=E^c\oplus E^u$, respectively.

If $f$ is dynamically coherent, we define the center foliation of $f$ to be $\W^c=\W^{cs}\cap \W^{cu}$.
\end{defi}

By definition, $\W^c$ is an $f$-invariant foliation tangent to $E^c$ at every point, and thus $(f,\W^c)$ is normally hyperbolic. We note that here we do not require $E^{cs}$ or $E^{cu}$ to be uniquely integrable, and $E^c$ also does not have to be uniquely integrable to $\W^c$. (Although for the applications of this paper, the bundles are uniquely integrable). See \cite{BWcoh} for more discussion of this subject.

We now introduce a central result concerning leaf-wise structural stability of normally hyperbolic diffeomorphisms. A more general statement and proof can be found in \cite{BWcoh} and \cite{PSWholrev}.

\begin{thm}[Leaf-wise structural stability, see Theorem 7.1 of \cite{HPS}]\label{leafconj}
Let $f$ be a $C^r$ diffeomorphism on $M$ preserving a foliation $\mathcal{F}$. If $(f,\mathcal{F})$ is $r$-normally hyperbolic, then the leaves of $\mathcal{F}$ are uniformly $C^r$, and the bundles $E^u$ and $E^s$ are uniquely integrable with leaves as smooth as $f$.

Moreover, if the splitting $TM=E^u\oplus E^c\oplus E^s$ is $C^1$, then $f$ is $C^1$-stably dynamically coherent, and there exists a bi-H\"older leaf conjugacy $\phi$ from $f$ to any $C^1$-perturbation $f_1$ of $f$ that is $C^1$-close to identity, and uniformly $C^r$ when restricted to the center leaves $\W^c_f$.
\end{thm}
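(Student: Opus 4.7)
The plan is to follow the classical Hirsch--Pugh--Shub graph transform approach, as developed in the generality stated here in \cite{BWcoh} and \cite{PSWholrev}. The argument has four natural steps, and the main technical payload is a careful choice of function spaces so that a single contraction mapping delivers all the regularity claims simultaneously.

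First, I would establish that the leaves of $\cF$ are uniformly $C^r$. The idea is to realize each leaf locally as the graph of a section of the normal bundle over $E^c$ inside an exponential chart, and to set up the graph transform $T_f$, which sends such a local section to the section whose graph is the $f$-image of the original graph. The $r$-normal hyperbolicity hypothesis, in the form of the inequality $\|D_xf^k|_{E^s}\|\cdot\|D_xf^{-k}|_{E^c}\|^r<1$ (and its unstable counterpart), is exactly what is needed to make $T_f$ a contraction on the ball of sections of $C^r$-norm at most some fixed constant. The unique fixed point is $\cF$, and this yields leaves whose $C^r$ jets are uniformly bounded.

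Second, I would apply the stable/unstable manifold theorem to $E^s$ and $E^u$: these are the extremal bundles in the dominated splitting, so the usual graph transform (now over the $E^s$ or $E^u$ direction) delivers unique $f$-invariant foliations $\W^s,\W^u$ whose leaves are as smooth as $f$, because along the contracting/expanding direction there is no loss of regularity of the type present in the center direction. Third, for dynamical coherence of $C^1$-perturbations: since $r$-normal hyperbolicity is a $C^1$-open condition, any $C^1$-small perturbation $g$ is also $r$-normally hyperbolic. When the splitting is $C^1$, the bundles $E^{cs}_g$ and $E^{cu}_g$ produced by a fixed-point argument near $E^{cs}_f,E^{cu}_f$ are also $C^1$; this is enough to guarantee their unique integrability via the classical Frobenius-type argument for $C^1$ integrable bundles in this setting, and $\W^c_g:=\W^{cs}_g\cap \W^{cu}_g$ is then an invariant center foliation for $g$.

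Fourth, the leaf conjugacy $\phi$ is constructed as the fixed point of yet another graph transform: parametrize candidates $\phi$ by sections of a tubular neighborhood of the identity that send each leaf of $\W^c_f$ to a leaf of $\W^c_g$, and define $T(\phi)$ using the diagram $g\circ\phi\sim \phi\circ f$ modulo center leaves. Normal hyperbolicity makes this a contraction on an appropriate closed subset of continuous maps; bi-H\"older regularity of the fixed point follows from the classical H\"older continuity of the stable and unstable holonomies along center leaves, while uniform $C^r$-regularity along center leaves follows because, once the leaf-to-leaf assignment is fixed by $\phi$, the restriction $\phi|_{\W^c_f(x)}$ is itself the fixed point of a $C^r$ graph transform in which the relevant operator is now the conjugacy problem between the $C^r$ diffeomorphisms $f|_{\W^c_f(x)}$ and $g|_{\W^c_g(\phi(x))}$. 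The main obstacle, and the point where the Hirsch--Pugh--Shub machinery must be invoked with care, is exactly this coupling: one needs $C^r$-regularity along the center and only H\"older regularity transversely, yet the estimates must be consistent enough for a single contraction mapping argument. The resolution, as in \cite{PSWholrev}, is to work in a Banach space whose norm mixes $C^r$ leaf-wise derivatives with H\"older transverse moduli, and to verify the spectral gap condition for the linearized graph transform in this mixed norm; this is the technical heart that makes the theorem go through.
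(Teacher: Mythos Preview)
The paper does not prove this theorem: it is stated in the preliminaries as a known result, with the remark that ``a more general statement and proof of the result can be found in \cite{BWcoh} and \cite{PSWholrev}.'' Your outline is precisely a sketch of the Hirsch--Pugh--Shub graph transform argument as carried out in those references, so there is nothing in the paper itself to compare your approach against; your plan is consistent with the cited sources and is the standard route.
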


Applying this result to the diffeomorphisms of $G/\Gamma$ that we consider, we see that any sufficiently small perturbation $f$ of an element of the Weyl chamber flow $f_0$ has the properties described above.

\begin{prop}
Let $1\le r\le \infty$. Let $f_0$ be a generic element of the Weyl chamber flow, and let $f\in \mathrm{Diff}^r(X)$ be a $C^1$-perturbation of $f_0$. Then $E^u_f$ and $E^s_f$ are uniquely integrable, $f$ is dynamically coherent and $r$-normally hyperbolic, and there exists a bi-H\"older leaf conjugacy from $f_0$ to $f$ that is uniformly $C^r$ on the center leaves. Moreover, any commuting diffeomorphism $g\in \mathcal Z^r(f)$ preserves $\W^{*}_f$ for $*=s,u,cs,cu,c$.
\end{prop}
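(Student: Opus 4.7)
The plan is to realize the statement as a direct application of Theorem~\ref{leafconj}, with the unperturbed map $f_0$ playing the role of the $r$-normally hyperbolic reference system and the center foliation $\mathcal{F}_{f_0}$ being the homogeneous foliation of $X=G/\Gamma$ by orbits of the right action of the maximal torus $D=\exp(\mathfrak{h})$. Since both $r$-normal hyperbolicity and $C^1$-regularity of the invariant splitting are $C^1$-open conditions, once these are established for $f_0$ the conclusion transfers automatically to every sufficiently $C^1$-close perturbation $f$.

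First I would write down the $Df_0$-invariant splitting coming from the root-space decomposition. Let $\widetilde{\mathfrak{g}}_\lambda\subset TX$ denote the left-invariant subbundle arising from the root space $\mathfrak{g}_\lambda$, and $\widetilde{\mathfrak{h}}\subset TX$ the one arising from $\mathfrak{h}$. Define
\[
E^s_{f_0}=\bigoplus_{\lambda(v)<0}\widetilde{\mathfrak{g}}_\lambda,\qquad E^c_{f_0}=\widetilde{\mathfrak{h}},\qquad E^u_{f_0}=\bigoplus_{\lambda(v)>0}\widetilde{\mathfrak{g}}_\lambda.
\]
These subbundles are left-invariant, hence real-analytic (in particular $C^1$), and $T\mathcal{F}_{f_0}=E^c_{f_0}$. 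The proposition just proved in Section~\ref{semisimple} shows that, with respect to a left-invariant metric on $X$, $Df_0$ acts on $\widetilde{\mathfrak{g}}_\lambda$ by multiplication by $e^{\lambda(v)}$, while $Df_0|_{E^c_{f_0}}$ is an isometry because $\exp(v)\in D$ commutes with all of $D$. Genericity of $v$ guarantees $\lambda(v)\neq 0$ for all $\lambda\in\Phi$, so the splitting is genuinely partially hyperbolic.

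Next I would verify $r$-normal hyperbolicity for arbitrary $r\ge 1$. Set $\alpha=\min_{\lambda(v)<0}|\lambda(v)|>0$ and $\beta=\min_{\lambda(v)>0}\lambda(v)>0$. The pointwise bounds
\[
\|Df_0^k|_{E^s_{f_0}}\|\le e^{-\alpha k},\qquad \|Df_0^{-k}|_{E^u_{f_0}}\|\le e^{-\beta k},\qquad \|Df_0^{\pm k}|_{E^c_{f_0}}\|=1
\]
hold uniformly on $X$, so for any $r\ge 1$ and any sufficiently large $k$,
\[
\sup_{x\in X}\|Df_0^k|_{E^s_{f_0}}\|\cdot\|Df_0^{-k}|_{E^c_{f_0}}\|^r\le e^{-\alpha k}<1,\qquad \sup_{x\in X}\|Df_0^{-k}|_{E^u_{f_0}}\|\cdot\|Df_0^k|_{E^c_{f_0}}\|^r\le e^{-\beta k}<1.
\]
Thus $(f_0,\mathcal{F}_{f_0})$ is $r$-normally hyperbolic with a smooth invariant splitting.

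Finally, Theorem~\ref{leafconj} applied to the pair $(f_0,\mathcal{F}_{f_0})$ delivers, for every $f$ in a $C^1$-neighbourhood of $f_0$, unique integrability of $E^s_f$ and $E^u_f$ with leaves as smooth as $f$, dynamical coherence and $r$-normal hyperbolicity of $f$, and a bi-H\"older leaf conjugacy $\phi$ from $f_0$ to $f$ that is uniformly $C^r$ along center leaves; this is exactly the content of the proposition. There is essentially no obstacle: the only point one must notice is that $r$-normal hyperbolicity is required for \emph{every} $r$, but this is automatic because $Df_0$ acts isometrically on $E^c_{f_0}$, collapsing the ratio $\|Df_0^k|_{E^s_{f_0}}\|\cdot\|Df_0^{-k}|_{E^c_{f_0}}\|^r$ to an exponentially decaying quantity in $k$ independently of $r$.
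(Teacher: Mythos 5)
Your proposal is correct and is essentially the paper's own proof: verify that $f_0$ is $r$-normally hyperbolic (for every $r$) with respect to the foliation by $D$-orbits, using the root-space decomposition and the fact that $Df_0$ acts by $e^{\lambda(v)}$ on each $\widetilde{\mathfrak g}_\lambda$ and isometrically on the center, and then invoke Theorem~\ref{leafconj} together with the openness of these conditions. The only blemish is a left/right convention slip: the center leaves are orbits $Dx\Gamma$ of the \emph{left} $D$-action (a right $D$-action on $G/\Gamma$ is not well defined), and the corresponding invariant subbundles are the \emph{right}-invariant extensions of $\mathfrak g_\lambda$ and $\mathfrak h$, which are the ones that descend to $G/\Gamma$; with that correction the argument is exactly as in the paper.
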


\begin{proof}
By the analysis in Section~\ref{semisimple}, $f_0$ is $r$-normally hyperbolic with respect to the foliation $\W^c_{f_0}$ given by cosets of $D$. Therefore, applying Theorem~\ref{leafconj} yields dynamical coherence and the H\"older leaf conjugacy. Moreover, since $\W^c_f$ is plaque expansive, by the dynamical characterization of $\W^{*}_f$ for $*=cs,cu,s,u$, any diffeomorphism $g$ that commutes with $f$ preserves these foliations, see Theorem 7.1 and surrounding discussions in \cite{HPS}. 
\end{proof}


\subsection{Accessibility}
We now recall some definitions and facts about $su$-holonomies of partially hyperbolic diffeomorphisms.

In this paper, we shall mainly consider holonomies along the center leaves.

An \emph{$su$-path} of $f$ is a piecewise $C^1$ curve $\gamma:[0,1]\to M$ with each piece contained in either $\W^s$ or $\W^u$, i.e.\ there exists $k\in \N$ and
\[
0=t_0<t_1<t_2<\cdots<t_k=1
\]
such that $\gamma([t_i,t_{i+1}])\subset \W^s(\gamma(t_i))$ or $\gamma([t_i,t_{i+1}])\subset \W^u(\gamma(t_i))$. In this case, the $su$-path $\gamma$ is said to be $k$-legged. Sometimes we also denote an $su$-path by its endpoints
\[
[\gamma(0),\gamma(t_1),\ldots,\gamma(t_k)].
\]

\begin{defi}[Accessibility]
A partially hyperbolic diffeomorphism $f$ is said to be \emph{accessible} if for any $x,y\in M$, there exists an $su$-path from $x$ to $y$.
\end{defi}

For partially hyperbolic systems with center dimension $1$ or $2$, accessibility is known to be open by \cite{Didier} and \cite{AV}. Stable accessibility (under $C^1$-small perturbations) is also shown to be $C^r$-dense, for $r\ge 1$, in \cite{RHU} for center dimension $1$, and $C^1$-dense for arbitrary center dimension in \cite{DW}. Furthermore, if the center foliation of a partially hyperbolic diffeomorphism $f$ is smooth, then accessibility is $C^1$-open around $f$ by Proposition~1.4 of \cite{GPS}.

In our case, since $f_0$ is highly accessible due to the Lie algebra structure and the center leaves are smooth, we can apply Proposition~1.4 of \cite{GPS} to show that accessibility is open around $f_0$.

\begin{prop}\label{faccess}
Let $f_0$ be a generic element of the Weyl chamber flow on $X=G/\Gamma$. Then any $C^1$ perturbation $f$ of $f_0$ is accessible. Furthermore, for any $x,y\in X$ and any $f_0$-$su$-path $\gamma_0$ connecting $x$ and $y$, there exists an $f$-$su$-path $\gamma$ connecting $x$ and $y$ that is close to $\gamma_0$.
\end{prop}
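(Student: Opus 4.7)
The plan is to combine explicit Lie-theoretic accessibility of $f_0$, openness of accessibility under $C^1$ perturbations via Proposition 1.4 of \cite{GPS}, and continuity of the stable and unstable foliations to lift $f_0$-paths to nearby $f$-paths with the same endpoints.

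For the first assertion, since $v$ is generic in $\mathfrak{h}$, Section \ref{semisimple} identifies the stable and unstable bundles of $f_0$ as
\[
E^s_{f_0} = \bigoplus_{\lambda(v)<0}\mathfrak{g}_\lambda =: \mathfrak{n}^-, \qquad E^u_{f_0} = \bigoplus_{\lambda(v)>0}\mathfrak{g}_\lambda =: \mathfrak{n}^+,
\]
with leaves through $x\Gamma$ given by $N^\pm x\Gamma$, where $N^\pm = \exp(\mathfrak{n}^\pm)$. In an $\R$-split semisimple Lie algebra, each bracket $[\mathfrak{g}_\lambda, \mathfrak{g}_{-\lambda}]$ contains the co-root $h_\lambda$, and the co-roots span $\mathfrak{h}$; hence $\mathfrak{n}^+\cup\mathfrak{n}^-$ Lie-generates $\mathfrak{g}$, and since $G$ is connected, $\langle N^+, N^-\rangle = G$. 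So any two points of $G/\Gamma$ are joined by a finite product of elements of $N^\pm$, i.e., by an $f_0$-$su$-path, and $f_0$ is accessible. Because the center foliation of $f_0$ (cosets of $D$) is real-analytic, Proposition 1.4 of \cite{GPS} applies to give $C^1$-openness of accessibility at $f_0$; every $C^1$-small perturbation $f$ is therefore accessible.

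For the approximation claim, given $\gamma_0 = [x_0, x_1, \ldots, x_k]$ with $x_0=x$, $x_k=y$, I would first lift $\gamma_0$ leg-by-leg to an $f$-$su$-path $\gamma'$: set $x_0'=x$ and inductively pick $x_i'\in \W^{s/u}_f(x_{i-1}')$ close to $x_i$ on the leaf of the same type as the $i$-th leg of $\gamma_0$. This is possible because the distributions $E^s_f, E^u_f$, and their integral foliations, depend continuously on $f$ in $C^0$. The endpoint $y' := x_k'$ is close to but generally distinct from $y$. To force an exact endpoint, I would extend $\gamma_0$ by appending a short $f_0$-$su$-loop at $y$ whose legs alternate between $\W^s_{f_0}$ and $\W^u_{f_0}$, chosen so that the endpoint map of the extended combinatorial type is a submersion onto a neighborhood of $y$ at the corresponding parameter; such a loop exists because iterated brackets of $\mathfrak{n}^+, \mathfrak{n}^-$ span $\mathfrak{g}$ at every point. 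Lifting this extended path to $f$ yields an endpoint map that is continuous in $f$ and still a submersion for $f$ close to $f_0$, so by the implicit function theorem one can perturb the leg parameters of the lift by an arbitrarily small amount to land exactly on $y$. The concatenation is the desired $f$-$su$-path close to $\gamma_0$.

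The main obstacle is the endpoint-matching step: closeness of the foliations only produces an $f$-$su$-path from $x$ ending near $y$, not at $y$. The extension-and-correct argument succeeds provided the submersion property of the endpoint map is preserved under $C^1$ perturbations of $f$, which is the local content of Proposition 1.4 of \cite{GPS} once the combinatorial type of the path is fixed; once that is in hand, the implicit function theorem step is routine and the correction can be made as small as desired.
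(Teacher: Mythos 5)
Your proposal is correct and follows essentially the same route as the paper, which disposes of this proposition in one sentence: accessibility of $f_0$ from the Lie algebra structure (the root spaces in $E^s_{f_0}\oplus E^u_{f_0}$ bracket-generate $\mathfrak{g}$) plus Proposition 1.4 of \cite{GPS} using smoothness of the center foliation. Your leg-by-leg lifting with a submersive endpoint-correction loop is a reasonable fleshing-out of the ``furthermore'' clause, which the paper leaves implicit in the same citation.
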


\begin{defi}[Center bunching]
We say that a partially hyperbolic diffeomorphism $f:M\to M$ is \emph{center $r$-bunched} if there exists $k\ge 1$ such that
\[
\sup \|D_xf^k|_{E^s}\|\cdot \|(D_xf^k|_{E^c})^{-1}\|^r<1,
\]
\[
\sup \|(D_xf^k|_{E^u})^{-1}\|\cdot \|D_xf^k|_{E^c}\|^r<1,
\]
\[
\sup \|D_xf^k|_{E^s}\|\cdot \|(D_xf^k|_{E^c})^{-1}\|\cdot \|D_xf^k|_{E^c}\|^r<1,
\]
and
\[
\sup \|(D_xf^k|_{E^u})^{-1}\|\cdot \|(D_xf^k|_{E^c})^{-1}\|^r\cdot \|D_xf^k|_{E^c}\|<1.
\]

We say that $f$ is \emph{center bunched} if it is center $1$-bunched.
\end{defi}

\begin{rmk}
We note that the first two inequalities above are the same as $r$-normal hyperbolicity.
\end{rmk}

The following theorem of Burns--Wilkinson \cite{BWerg} shows that, in a very general setting, partial hyperbolicity together with accessibility implies ergodicity.

\begin{thm}[Theorem~1, \cite{BWerg}]\label{accerg}
Let $f$ be a volume-preserving, partially hyperbolic diffeomorphism that is center bunched and accessible. Then $f$ is ergodic.
\end{thm}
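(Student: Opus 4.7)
The plan is to execute the Hopf argument in the version developed by Burns--Wilkinson, in which the classical absolute continuity of the stable and unstable foliations is refined through a density-point notion adapted to the center bunching hypothesis. Fix a continuous $\phi:M\to\R$ and, using Birkhoff's ergodic theorem and volume invariance, define the forward and backward time averages $\phi^\pm(x)$; these agree on a full-measure set $X_\phi$, and the common value $\bar\phi$ is $f$-invariant. To prove ergodicity it suffices to show that $\bar\phi$ is essentially constant for every continuous $\phi$.

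First I would record the two classical facts that $\bar\phi$ is constant along stable leaves of $\W^s$ (using the forward average and the contraction on $E^s$) and along unstable leaves of $\W^u$ (using the backward average), so that after modification on a null set, $\bar\phi$ is $\W^s$- and $\W^u$-saturated on its domain of definition. The main obstruction, as in every Hopf-style argument in the partially hyperbolic setting, is that $X_\phi$ may intersect individual $\W^s$- and $\W^u$-leaves in null sets, so constancy on leaves does not immediately propagate across $su$-paths.

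The central step is therefore to upgrade ``almost-every point'' to a saturated notion of \emph{density point}. Following Burns--Wilkinson, I would define a family of \emph{juliennes} $J^{cu}_n(x)$ in the $cu$-direction (small $\W^u$-tubes over dynamically shrinking center disks) and call $x$ a $cu$-julienne density point of a measurable set $A$ if $m(A\cap J^{cu}_n(x))/m(J^{cu}_n(x))\to 1$. The $r$-bunching hypothesis, together with $C^1$ smoothness of the stable/unstable foliations along their leaves, is precisely what is needed to show that the stable holonomy between nearby $cu$-plaques is absolutely continuous with Jacobians whose oscillation over a julienne is controlled; from this one derives that the set of $cu$-julienne density points of any measurable $A$ is $\W^s$-saturated. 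By symmetry (swap $s \leftrightarrow u$ and $cu \leftrightarrow cs$) the analogous statement holds for $cs$-juliennes and $\W^u$-saturation. Applying these two saturation properties to the essentially full-measure set on which $\bar\phi$ equals a given value, one obtains that for each $c\in\R$ the set $\{\bar\phi=c\}$ contains an $su$-saturated full-measure subset.

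Finally I would invoke accessibility: any two points of $M$ are joined by an $su$-path, and the Lebesgue density theorem together with the saturation above shows that the essential image of $\bar\phi$ cannot split into two positive-measure values without violating $su$-invariance along such a path. Hence $\bar\phi$ is almost everywhere constant for every continuous $\phi$, and $f$ is ergodic. The hard part is the density-point/absolute-continuity package in the middle paragraph; once that technology is in place, the Hopf-style conclusion is essentially formal.
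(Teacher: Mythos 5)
This statement is quoted verbatim from Burns--Wilkinson (Theorem 1 of \cite{BWerg}); the paper offers no proof of its own, and your sketch is a faithful outline of exactly the argument in that reference: the Hopf argument upgraded via julienne density points and the absolute continuity of stable/unstable holonomies under the center bunching hypothesis. Your outline is correct at the level of strategy, with the caveat (which you acknowledge) that the entire technical content lives in the deferred middle step; if you were to flesh it out, note that the precise saturation theorem requires the measurable set $A$ to already be $\W^u$-saturated before one can conclude that its $cu$-julienne density points form a $\W^s$-saturated set, that one also needs the comparison theorem identifying julienne density points with Lebesgue density points for such saturated sets, and that in the absence of dynamical coherence the center plaques defining the juliennes must be replaced by the locally invariant ``fake'' foliations constructed in \cite{BWerg}.
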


\subsection{Smoothness of $su$-holonomy}
We now consider the smoothness of the stable and unstable foliations in the transverse direction.

In this paper, we shall mainly consider stable and unstable holonomies with the center leaves as sections.

Suppose $f$ is a dynamically coherent, normally hyperbolic diffeomorphism with center foliation $\W^c$. Then, restricted to a center-stable leaf $\W^{cs}$, we may consider the local holonomy of the stable foliation from one center leaf to another. Let $p\in M$ and $q\in \W^s(p)$ be a pair of points in the same stable leaf of $f$. Then there exist sufficiently small neighborhoods $U(p)\subset \W^c_f(p)$ of $p$ and $U(q)\subset \W^c_f(q)$ of $q$ in the center leaves, and a constant $R=2d_{\W^s}(p,q)$, such that for every $x\in U(p)$, the set
\[
\W^s_f(x,R)=\{y\in \W^s_f(x): d_{\W^s}(x,y)<R\}
\]
has a unique intersection with $U(q)$. The local stable holonomy is then given by
\[
h^s_{f,p,q}: U(p)\to U(q), \quad x\mapsto y= U(q)\cap \W^s_f(x,R).
\]

The stable and unstable holonomies on the center leaves are known to be smooth.

\begin{thm}[Theorem~B, \cite{PSW}; Theorem~1.1, \cite{Saghin}]\label{c1hol}
Let $f\in \mathrm{Diff}^{r+1}(M)$, $r\ge 1$, be a normally hyperbolic, center $r$-bunched diffeomorphism on a Riemannian manifold $M$. Then the local stable and unstable holonomy maps for the center leaves are uniformly $C^r$. Furthermore, the holonomy $h^s_{f,p,q}$ from $\W^c_f(p)$ to $\W^c_f(q)$, where $q\in \W^s_f(p)$, depends $C^1$-continuously on $f$ with respect to the $C^1$ topology on $\mathrm{Diff}^{r+1}(M)$, and depends $C^r$-continuously on $p,q$ with respect to the $C^0$ topology on $M$.
\end{thm}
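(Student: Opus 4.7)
The plan is to prove Theorem \ref{c1hol} via the Hirsch--Pugh--Shub invariant section theorem applied to a bundle of $r$-jets, which is the classical route to holonomy regularity in partially hyperbolic dynamics.

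\textbf{Setup via an asymptotic characterization.} First I would fix $p$ and $q\in \W^s_f(p)$ and characterize the stable holonomy $h^s_{f,p,q}$ as the unique continuous map from a small neighborhood $U(p)\subset \W^c_f(p)$ to $U(q)\subset \W^c_f(q)$ for which $d(f^n(x),f^n(h^s_{f,p,q}(x)))\to 0$ as $n\to\infty$. For any candidate $H:U(p)\to U(q)$, the conjugation $f\circ H\circ f^{-1}$ defined near $f(p)$ is a candidate for the holonomy from $f(p)$ to $f(q)$; so the system of all stable holonomies is invariant under push-forward by $f$. This lets me recast the problem as finding a fixed section of an appropriate bundle map.

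\textbf{Graph transform on $r$-jets.} Next I would introduce the bundle $J^r_c\to M$ whose fiber at $x$ is the space of $r$-jets at $x$ of $C^r$ maps from $\W^c_f(x)$ into nearby center leaves, together with the bundle map
\[
\Phi_f(j^r_x H) := j^r_{f(x)}(f\circ H\circ f^{-1})
\]
covering $f$. The jet of the stable holonomy should then be realized as an invariant section of $\Phi_f$ along each stable leaf. I expect the center $r$-bunching condition to be exactly the hypothesis making $\Phi_f^{-1}$ a fiberwise contraction on $r$-jets above the contraction of $f$ along $\W^s$: the four bunching inequalities are tailored, respectively, to control the $C^0$ size, the H\"older modulus, the $C^r$ size, and the $C^r$ H\"older modulus of sections. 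HPS then yields a unique continuous invariant section, and the stronger third and fourth bunching inequalities upgrade it to $C^r$ along center leaves with uniform bounds.

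\textbf{Identification and parameter dependence.} I would identify this invariant section with the $r$-jet of $h^s_{f,p,q}$ via the asymptotic characterization from the first step, giving uniform $C^r$-regularity of $h^s_{f,p,q}$ along center leaves. For the continuity statements I would examine how $\Phi_f$ depends on its data: it depends $C^r$ on $p,q$ through the $C^r$-smoothness of center leaves given by Theorem \ref{leafconj}, and it depends $C^1$ on $f$ in the $C^1$-topology of $\mathrm{Diff}^{r+1}(M)$ through the first derivative of $f$. The standard smooth-dependence theorem for parameterized contractions (or the implicit function theorem in the Banach space of sections) then transfers these regularities to the fixed section and hence to $h^s_{f,p,q}$.

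\textbf{Main obstacle.} The hardest part will be matching function spaces to the bunching hypotheses so that $\Phi_f^{-1}$ is a genuine contraction in a norm capturing uniform $C^r$-behavior. Because $E^c$ is only H\"older transverse to $E^s$, one should not hope for $C^r$-regularity transverse to center leaves, and this is precisely why the statement restricts to $C^r$ regularity along center leaves; setting up the Banach space of sections that measures both size and H\"older modulus of $r$-jets really requires the full four-inequality bunching assumption. The $C^1$-in-$f$ dependence is a further delicacy that needs $C^{r+1}$ regularity of $f$ to differentiate through the fixed point equation, which explains the $r+1$ in the hypothesis.
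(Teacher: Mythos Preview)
The paper does not contain a proof of Theorem~\ref{c1hol}; it is quoted from the literature (Theorem~B of \cite{PSW} and Theorem~1.1 of \cite{Saghin}) and used as a black box. So there is no ``paper's own proof'' to compare against.

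That said, your outline is essentially the approach taken in the cited references: Pugh--Shub--Wilkinson obtain $C^r$ holonomy by applying the $C^r$ section theorem (a graph-transform argument) to a bundle of jets of center-leaf maps, and the four center-bunching inequalities are precisely the spectral conditions needed for the induced bundle map to dominate in the required norms. Your identification of why $f\in\mathrm{Diff}^{r+1}$ is needed (one extra derivative to differentiate through the fixed-point equation for $C^1$-in-$f$ dependence) and why regularity is only along center leaves (H\"older-only transverse structure of $E^c$) is accurate. One small point: the direction of the contraction is worth stating more carefully---the invariant section is obtained by iterating backward from the near-identity jet at forward times (since $d(f^n p,f^n q)\to 0$ along a stable leaf), so it is the pullback operator that contracts on jets, and the first and third bunching inequalities encode exactly this domination. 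If you want to turn your sketch into an actual proof, you should follow the careful function-space setup in \cite{PSW} (or the streamlined account in \cite{Saghin}) rather than improvise, since matching the H\"older-modulus norms to the bunching inequalities is where most of the technical work lies.
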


Instead of a single stable or unstable holonomy, we may also define \emph{holonomies of an $su$-path}. Given an $su$-path $\gamma$ of $f$ with endpoints $x_0,x_1,\ldots,x_k$, we may locally define the \emph{$su$-holonomy} in a neighborhood of $x_0$ in $\W^c(x_0)$ by concatenating stable and unstable holonomies between center leaves inside center-stable or center-unstable leaves following the path $\gamma$. More concretely, the local $su$-holonomy of a path $\gamma$ with endpoints $[x_0,x_1,\ldots,x_k]$, where $x_{i+1}\in \W^{t(i)}(x_i)$ for $0\le i\le k-1$ and $t(i)\in\{s,u\}$, is given by
\[
h_\gamma^f
= h^{t(k-1)}_{f,x_{k-1},x_k}\circ h^{t(k-2)}_{f,x_{k-2},x_{k-1}}
\circ \cdots \circ h^{t(0)}_{f,x_0,x_1}
: U(x_0)\to U(x_k),
\]
where $U(x_i)\subset \W^c_f(x_i)$ are sufficiently small neighborhoods of $x_i$ in its center leaf.

As a direct application of Theorem~\ref{c1hol}, we see that for partially hyperbolic diffeomorphisms satisfying the hypotheses of that theorem, the $su$-holonomies on center leaves are also smooth and depend smoothly on $f$ and $\gamma$.

We say that $f$ has \emph{global holonomy} on a center leaf $\W^c(x_0)$ if, for any $su$-path $\gamma$ of $f$ starting in that leaf and ending in $\W^c(x_k)$, the domain of $h_\gamma^f$ can be extended to the entire center leaf $\W^c(x_0)$. More precisely, $f$ has global holonomy on $\W^c(x_0)$ if for any $su$-path $\gamma=[x_0,x_1,\ldots,x_k]$ there exists a family of $su$-paths $\gamma_i$, $i\in I$, with endpoints in $\W^c(x_0)$ and $\W^c(x_k)$ such that
\begin{itemize}
\item the domains $U_i\subset \W^c(x_0)$ of the local holonomies $h_{\gamma_i}^f:U_i\to \W^c(x_k)$ cover $\W^c(x_0)$, i.e.\ $\bigcup_i U_i=\W^c(x_0)$;

\item the local holonomies of the $\gamma_i$ agree, i.e.\ $h_{\gamma_i}^f=h_{\gamma_j}^f$ on $U_i\cap U_j$ for any $i,j\in I$.
\end{itemize}

In the setting of Theorems \ref{cenrig} and \ref{cenrigsemi}, we show that $f$ has global holonomy in a generic center leaf that is simply connected.

\begin{prop}
 Let $f_0$ be a non-trivial element of the Weyl chamber flow, then any $C^1$-perturbation $f$ of $f_0$ has global holonomy in the universal cover. In particular, $f$ has global holonomy in a center leaf $\W^c_f(x_0)$ if $\W^c_f(x_0)$ is simply connected.
\end{prop}

\begin{proof}
 We lift the diffeomorphisms $f,f_0$ and the corresponding foliations in $G/\Gamma$ to the cover $ G$, denoted by $\tilde{f_0},\tilde f:G\to G$ and $\tilde{\mathcal W}^*_{f_0}$, $\tilde{\mathcal W}^*_f$, for $*=s,c,u,sc,cu$.

 Since $f_0$ is an affine diffeomorphism, $\tilde{\mathcal W}^*_{f_0}$ are exactly the orbits of $G^*=\exp(\mathfrak{g}^*)$ under left translation, where $\mathfrak{g}=\mathfrak{g}^s\oplus \mathfrak{g}^c\oplus \mathfrak{g}^u$ is the invariant splitting into generalized eigenspaces with eigenvalues $|\lambda|<1$, $|\lambda|=1$ and $|\lambda|>1$, given by the action of $Df_0$ on the Lie algebra $\mathfrak{g}$; and $\mathfrak{g}^{cs}=\mathfrak{g}^{c}\oplus \mathfrak{g}^{s},\mathfrak{g}^{cu}=\mathfrak{g}^{c}\oplus \mathfrak{g}^{u}$.
 
 We first claim that the foliations $\tilde\W^c_{f_0}$ and $\tilde\W^s_{f_0}$ have global product structure inside a leaf $\tilde \W^{cs}_{f_0}$. This is because we have a natural map $G^c\times G^s\to G^{cs},(g_c,g_s)\mapsto g_cg_s$ which is a diffeomorphism.

 This allows us to define a continuous map on each $cs$-leaf of $\tilde f_0$ given by $\tilde \W^{cs}_{f_0}(x)\to G^s: y\mapsto g_s(yx^{-1})$ where $g_s: G^{cs}\to G^s$ is just the projection under the product structure $g_cg_s\mapsto g_s$.

 Since $f_0$ is dynamically coherent, normally hyperbolic, center bunched with smooth center leaves, for sufficiently small $C^1$-perturbations $f$ of $f_0$, there exists a leaf conjugacy $\phi$ from $(f_0,\W^c_{f_0})$ to $(f,\W^c_{f})$. By the dynamics of $f$ and $f_0$, $\phi$ also sends $\W^{cs}_{f_0}$ to $\W^{cs}_{f}$. We take the map $u:\tilde\W^{cs}_f(x_0)\to G^s$ given by $u(x)=g_s(\phi^{-1}(x)\cdot (\phi^{-1}(x_0))^{-1})$. Then by construction, leaves of $\tilde\W^c_f$ are exactly the level sets of the map $u$.

 We claim that $u$ restricted to any stable leaf $L=\tilde\W^s_f(x_1)$ is a homeomorphism. 
 
 Firstly, we show that $u|_L$ is injective. If $u(x)=u(y)$ for $x,y\in \tilde \W^s_f(x_1)$, then $y\in \tilde \W^c_f(x)\cap \tilde \W^s_f(x)$. By the dynamics in the stable and center foliation, this shows that $d(f^n(x),f^n(y))\le C_1 \mu^n d(x,y)$ and $d_{\tilde \W^c_f}(f^n(x),f^n(y))\ge C_2 \nu^n d_{\tilde \W^c_f}(x,y)$ for any $n>0$, where $0<\mu<\nu<1, C_1,C_2$ are fixed constants given by the dominated splitting that depends only on $f_0$ and the $C^1$-neighborhood where we pick $f$. Since the leaf conjugacy $\phi^{-1}$ is $C^1$-close to identity along $\W^c_f$ and $C^0$-close to identity in $X$, we have $$d_{\tilde \W^c_f}(f^n(x),f^n(y))\le 2 d_{\tilde \W^c_{f_0}}(\phi^{-1} f^n(x),\phi^{-1} f^n(y))$$ and $$ d_{G}(\phi^{-1} f^n(x),\phi^{-1} f^n(y))\le d_{G}( f^n(x),f^n(y))+2\epsilon\le C_1\mu^nd(x,y)+2\epsilon\le 3\epsilon$$ for sufficiently large $n>N_\epsilon $, and $\epsilon$ can be taken to be arbitrarily small as $d_{C^1}(f,f_0)\to 0$. Since $d_{G}(\phi^{-1} f^n(x),\phi^{-1}f^n(y))\le 3\epsilon$ and $\phi^{-1} f^n(y)\in \tilde\W^c_{f_0}(\phi^{-1} f^n(x))$ the local geometry in $\tilde G$ gives a uniform bound $$C_3 d_{G}(\phi^{-1} f^n(x),\phi^{-1} f^n(y))\ge d_{\tilde \W^c_{f_0}}(\phi^{-1} f^n(x),\phi^{-1} f^n(y))$$ where $C_3>1$ depends only on $f_0$ and $G$.
 
 Therefore, we have $d_{\tilde \W^c_f}(f^n(x),f^n(y))<6C_3\epsilon $ for $n>N_\epsilon $. Pick a sufficiently small $\epsilon$, this implies that $d_{\tilde \W^c_f}(f^n(x),f^n(y)) $ is comparable to $d_X(f^n(x),f^n(y))$ for any $n>N_\epsilon $. On the other hand, we have $$d(f^n(x),f^n(y))\le C_1 \mu^n d(x,y)$$ and $$d_{\tilde \W^c_f}(f^n(x),f^n(y))\ge C_2 \nu^n d_{\tilde \W^c_f}(x,y)$$ for any $n\in \N$, since $0<\mu<\nu<1$, letting $n\to \infty$ gives a contradiction unless $x=y$. 
 
 Secondly, $u|_L$ is locally a homeomorphism, since local pieces of $\W^s_f$ are homeomorphic to pieces of $\W^s_{f_0}$ via projection in the $\W^c_f$ direction. Moreover, $u|_L$ is proper. Let $K$ be any compact set in $G^s$, then $u^{-1}(K)\cap L$ is a bounded set in $\tilde G$ since $\phi^{-1}$ is uniformly $C^0$-close to identity and $E^s_f$ is $C^0$-close to $E^s_{f_0}$. Therefore, $u(L)$ is both open and closed in $G^s$, so $u$ is also surjective.

 This shows that for any $x,y\in G$, $\tilde \W^s_f(x)\cap \tilde \W^c_f(y)=\{z\in \tilde \W^s_f(x): u(z)=u(y)\}$ is a unique point in $G$. Therefore, stable holonomies of $\tilde f$ are globally well-defined in $G$, and similarly, unstable holonomies of $\tilde f$ are globally well-defined in $G$. Passing to the quotient $G/\Gamma$, for any $su$-path $\gamma$, a lifting $\tilde \gamma$ can be globally extended to continuous family of paths from $\tilde \W^c(\tilde \gamma(0))$ to $\tilde \W^c(\tilde \gamma(1))$, since $\W^c_f(x_0)$ is simply connected, the projection $ G\to G/\Gamma$ restricted to $\tilde \W^c_f(x_0)$ is a homeomorphism, this continuous family of paths project to a continuous family of paths from $\W^c_f(x_0)$ to $\W^c_f(\gamma(1))$. This shows that $f$ has global holonomy on $\W^c_f(x_0)$.
 
\end{proof}

\subsection{Some normal form theory}\label{sec:normalform}
In this section, we introduce some normal form theory for uniformly contracting foliations, which we will use to promote the regularity of certain homeomorphisms in the centralizer and the regularity of the conjugacy map.

Let $f$ be a diffeomorphism on a closed manifold $M$ that uniformly contracts a foliation $\W^s$ whose leaves are uniformly $C^1$. Let $E^s$ be the tangent bundle of $\W^s$, and denote by $F:E^s\to E^s$ the restriction of $Df$ to the subbundle $E^s$. Let
\[
F^*:\Gamma(E^s)\to \Gamma(E^s), \qquad F^*(v)(x)=F(v(f^{-1}(x))),
\]
be the linear map between sections of $E^s$ induced by $F$. The spectrum of $F^*$ is called the \emph{Mather spectrum} of $F$; see \cite{Mather}. The spectrum consists of closed annuli in $\C$ centered at the origin and bounded by circles of radii $e^{\lambda_i}$ and $e^{\mu_i}$ satisfying
\[
\lambda_1 \le \mu_1 < \lambda_2 \le \mu_2 < \cdots < \lambda_l \le \mu_l < 0.
\]

\begin{defi}[Narrow band spectrum]
A bundle map $F$ is said to have a \emph{narrow band spectrum} if
\[
\mu_i+\mu_l \le \lambda_i,\qquad \forall\, 1\le i \le l.
\]
We call the number $s(F):=\frac{\lambda_1}{\mu_l}$ the \emph{critical regularity} of $F$.
\end{defi}

We now introduce the main theorem of this section, which we shall use to upgrade the regularity of certain homeomorphisms commuting with $f$.

\begin{thm}[\cite{normalform}]\label{normalform}
Let $f$ be a $C^r$ diffeomorphism that uniformly contracts an invariant foliation $\W^s$ with uniformly $C^r$ leaves. Suppose the bundle map $F=Df|_{T\W^s}$ has narrow band spectrum with $s(F)<r$. Then there exists a family of local diffeomorphisms
\[
H_x:\W^s(x)\to T\W^s(x)=E^s(x),
\]
such that:

\begin{enumerate}
\item $P_x=H_{f(x)}\circ f\circ H_x^{-1}:E^s(x)\to E^s(f(x))$ is a polynomial map of degree $\le s(F)$ for any $x\in M$;

\item $H_x(x)=0$ and $D_xH_x=\mathrm{Id}$ for any $x\in M$;

\item $H_x$ depends $C^r$-continuously on $x$ and is jointly $C^r$ in $x$ and $y\in \W^s(x)$;

\item if $g$ is a homeomorphism commuting with $f$, preserving $\W^s$, and is $C^r$ along the leaves of $\W^s$, then
\[
Q_x:=H_{g(x)}\circ g\circ H_x^{-1}
\]
is also a polynomial map of degree $\le s(F)$.
\end{enumerate}
\end{thm}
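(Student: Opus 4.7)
The plan is to realize $H_x$ as the unique $C^r$ polynomial linearization of $f$ along each $\W^s$-leaf of degree at most $s(F)$, following the Guysinsky--Katok / Kalinin--Sadovskaya normal form program. Working in $C^r$-varying exponential-type charts $\exp_x : E^s(x) \to \W^s(x)$ with $\exp_x(0)=x$ and $D_0\exp_x = \mathrm{Id}$, I would first seek a formal expansion $H_x = \mathrm{Id} + \sum_{j\ge 2} h_j^x$ with $h_j^x$ homogeneous of degree $j$ on $E^s(x)$, such that the conjugate of $\exp_{f(x)}^{-1}\circ f\circ \exp_x$ by $H_x$ becomes polynomial of degree at most $d := \lfloor s(F) \rfloor$. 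Matching Taylor coefficients at each degree $j > d$ reduces to a cohomological equation of the form
\[
 h_j^{f(x)}\circ F_x \;-\; F_x^{(j)} \circ h_j^x \;=\; \Phi_j(x),
\]
where $F_x = D_xf|_{E^s}$, $F_x^{(j)}$ is the induced action on degree-$j$ symmetric tensors on $E^s$, and $\Phi_j$ is built from lower-order data. The narrow-band condition $\mu_i + \mu_l \le \lambda_i$ is precisely what guarantees that the associated coboundary operator is invertible with uniform estimates for every $j > s(F)$: its spectral radius is strictly less than one, and the norm of the inverse decays exponentially in $j$. Solving inductively produces unique formal jets.

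Next I would upgrade the formal jets to honest $C^r$ local diffeomorphisms and establish the joint regularity in 3). The tool is a Hirsch--Pugh--Shub-style $C^r$-section theorem on the bundle whose fibre over $x$ is the space of $r$-jets at $x$ of maps $\W^s(x) \to E^s(x)$ normalized by 2). The graph transform induced by $f$ on this bundle is a fibre contraction, and the contraction rates at each jet level are controlled exactly by the narrow-band inequalities; combined with the uniform $C^r$ regularity of the stable leaves (a hypothesis of the theorem), this yields the joint $C^r$ dependence of $H_x(y)$ in $x$ and $y \in \W^s(x)$. Properties 1) and 2) are built into the construction.

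For property 4), I would exploit uniqueness: any two families of $C^r$ maps satisfying 1)--3) for a polynomial conjugate of degree $\le s(F)$ must agree, since they differ by polynomial diffeomorphisms commuting with the $P_x$ and fixing $0$ with derivative $\mathrm{Id}$, and the narrow-band / non-resonance condition forces any such polynomial to be the identity. Given $g$ commuting with $f$, preserving $\W^s$, and $C^r$ along leaves, set $Q_x := H_{g(x)}\circ g\circ H_x^{-1}$; using $g\circ f = f\circ g$ together with $H_{f(y)}\circ f = P_y\circ H_y$ at $y = x$ and $y = g(x)$ yields the intertwining
\[
 Q_{f(x)}\circ P_x \;=\; P_{g(x)}\circ Q_x.
\]
Matching Taylor terms of order $> s(F)$ on both sides produces homogeneous cohomological equations whose only solution, by narrow band, is zero, so $Q_x$ is polynomial of degree $\le s(F)$. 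The main obstacle throughout is property 3): one must choose the function-space set-up carefully so the fibre-contraction estimates hold in the $C^r$ topology jointly in $x$ and the leaf variable, and the uniform-in-$x$ nature of the narrow-band inequalities is indispensable for this.
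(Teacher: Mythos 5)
This theorem is quoted in the paper from \cite{normalform} (the Kalinin--Sadovskaya/Guysinsky--Katok normal form theory) and the paper gives no proof of its own, so there is nothing internal to compare against. Your sketch correctly reproduces the standard argument from that literature: sub-resonance Taylor coefficients solved via cohomological equations made invertible by the narrow band condition, a fibre-contraction/$C^r$-section argument for the joint regularity in 3), and the intertwining relation $Q_{f(x)}\circ P_x=P_{g(x)}\circ Q_x$ plus uniqueness of bounded solutions for 4); apart from a harmless notational slip in which factor of the coboundary operator carries the degree-$j$ action of $F_x$, the outline is sound.
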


\subsection{Some Pesin theory}
We now recall the Oseledets splitting theorem and some basic Pesin theory that we will use in this paper. We first recall the Oseledets splitting theorem, which describes the asymptotic behavior of diffeomorphisms under iteration. Here we state the theorem in the setting of abelian group actions.

\begin{thm}[Oseledets Splitting Theorem]\label{Oseled}
Let $\alpha:A\to \mathrm{Diff}(M)$ be an abelian group action on a compact Riemannian manifold $M$ preserving an invariant Borel measure $\mu$.

Then for $\mu$-a.e.\ $x\in M$, there exists a decomposition
\[
T_xM=\bigoplus H_i(x)
\]
such that the decomposition is $D\alpha(a)$-invariant for any $a\in A$, and the \emph{Lyapunov exponents} $\chi_i:A\times M\to \R$, given by
\[
\chi_i(a)(x)=\lim_{k\to \infty} \frac{1}{|k|}\log\frac{\|D\alpha(a)^k v\|}{\|v\|},
\]
exist for any $v\in H_i(x)\setminus\{0\}$.
\end{thm}

\begin{rmk}
If we take a single diffeomorphism $f$ that generates an action $\Z\to \mathrm{Diff}(M)$, then we simply denote $\chi_i(1)(x)$ by $\chi_i(x)$, and we may further assume that
\[
\chi_1(x)<\chi_2(x)<\cdots<\chi_l(x)
\]
for almost every $x\in M$.

If $\alpha$ is $\mu$-ergodic, then the Lyapunov exponents are constant almost everywhere with respect to $x$, since they are $\alpha$-invariant, and we denote them by $\chi_i:A\to \R$. The Oseledets splitting theorem also holds in greater generality for cocycles other than the derivative; see \cite{introdyn}, Supplement~2.
\end{rmk}

A diffeomorphism $f$ with some non-zero Lyapunov exponents is said to be \emph{non-uniformly partially hyperbolic}. Similar to the case of partially hyperbolic diffeomorphisms, if a point $x\in M$ has Lyapunov exponent $\chi_k(x)<0$, then we may define stable and unstable manifolds for such diffeomorphisms.

\begin{thm}[Pesin Stable Manifold Theorem \cite{Pes}]\label{pesin}
Let $M$ be a compact Riemannian manifold and let $f:M\to M$ be a $C^2$ diffeomorphism with invariant measure $\mu$. Suppose the Lyapunov exponents satisfy
\[
\chi_k(x)<0\le \chi_{k+1}(x)
\]
for almost every $x\in M$. Then there exist Borel functions $\lambda:M\to(0,1)$ and $C:M\to(0,+\infty)$ such that for almost every $x\in M$, the Pesin stable manifold
\[
\W^s_f(x)=\{y\in M:\ d(f^n(x),f^n(y))\le C(x)\lambda(x)^n d(x,y)\ \forall n\ge 0\}
\]
is a $C^1$ submanifold of $M$ satisfying
\[
T_x\W^s_f(x)=\bigoplus_{i\le k} H_i(x).
\]
Furthermore, the stable manifolds satisfy $\W^s_f(x)=\W^s_f(y)$ for any $y\in \W^s_f(x)$.
\end{thm}

For an abelian action $\alpha$, we define the \emph{coarse Lyapunov foliations} of the action to be the maximal intersections of the Pesin stable manifolds of elements of $\alpha$.

\section{Center-fixing centralizer is a Lie group}\label{cenfix}

As we noted in Section~\ref{outl}, a general method for proving centralizer rigidity for partially hyperbolic diffeomorphisms is to separate the centralizer into the center-fixing part and the action on the leaf space of the center foliation via the following short exact sequence:
\[
1\to \CZ^r(f)\to \mathcal{Z}^r(f)\to \mathcal{Z}^r(f)/\CZ^r(f)\to 1.
\]

In this section, we deal with the center-fixing part and prove that it is a Lie group. We state and prove a general condition under which the centralizer of a partially hyperbolic diffeomorphism is a Lie group.

\begin{thm}\label{czlieg}
Let $M$ be a compact Riemannian manifold. Let $f$ be a partially hyperbolic $C^{r+1}$ diffeomorphism of $M$ with $r\ge 1$. Suppose that $f$ is accessible, dynamically coherent, center $r$-bunched, has global holonomy on a leaf $\W^c_f(x_0)$, and satisfies the narrow band spectrum condition (in both $E^s_f$ and $E^u_f$) with critical regularity $s(f)<r$. Then its $\W^c$-fixing $C^r$ centralizer $\CZ^r(f)$ is a Lie group acting freely and properly on the center leaf $\W^c_f(x_0)$, and we have
\[
\dim \CZ^r(f)\le \dim \W^c_f(x_0).
\]
\end{thm}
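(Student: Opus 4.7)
The plan is to endow $\CZ^r(f)$ with a topology in which it is a locally compact topological group, embed it via evaluation and normal-form data into a finite-dimensional model, invoke Bochner--Montgomery to obtain a Lie group structure, and show the orbit map at $x_0$ is an injective, proper immersion into $\W^c_f(x_0)$.

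First I would establish that every $g \in \CZ^r(f)$ preserves $\W^s_f$ and $\W^u_f$: this follows from the standard dynamical characterization of stable/unstable leaves together with uniform continuity of $g$, since $g \circ f = f \circ g$ implies $d(f^n g(y), f^n g(x)) = d(g f^n(y), g f^n(x)) \to 0$ whenever $y \in \W^s_f(x)$. Together with the hypothesis that $g$ preserves each $\W^c_f$-leaf, this yields the intertwining
\[
g \circ h^s_{f,p,q} = h^s_{f,\,g(p),\,g(q)} \circ g \qquad\text{and}\qquad g \circ h^u_{f,p,q} = h^u_{f,\,g(p),\,g(q)} \circ g
\]
on common domains, and hence $g \circ h^f_\gamma = h^f_{g\circ\gamma}\circ g$ for any $su$-path $\gamma$.

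Next I would apply the narrow-band normal form. Since $f$ is center $r$-bunched with critical regularity $s(f)<r$, the theorem cited above produces $C^r$ coordinates $H_x:\W^s_f(x)\to E^s(x)$ in which $g|_{\W^s_f(x)}$ is represented by a polynomial $Q^s_x:=H_{g(x)}\circ g\circ H_x^{-1}$ of degree $\le s(f)$, and likewise on $\W^u_f$ via $f^{-1}$. Define
\[
\Phi:\CZ^r(f)\longrightarrow \W^c_f(x_0)\times \mathcal{P}, \qquad \Phi(g):=\bigl(g(x_0),\,Q^s_{x_0},\,Q^u_{x_0}\bigr),
\]
where $\mathcal{P}$ is the finite-dimensional space of pairs of polynomials of degree $\le s(f)$. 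Using the intertwining relation together with accessibility of $f$, the restriction of $g$ to $\W^s_f(x_0)\cup \W^u_f(x_0)$ (together with $g(x_0)$) determines $g$ on all of $M$, so $\Phi$ is injective; a standard equicontinuity argument shows its image is closed. Hence $\CZ^r(f)$ is locally compact in the compact-open topology, and Bochner--Montgomery upgrades the action to a $C^r$ Lie group action on $M$.

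The heart of the proof is freeness of the action on $\W^c_f(x_0)$: assuming $g(x_0)=x_0$, one must show $g=\mathrm{id}$. The global holonomy hypothesis identifies each $y\in \W^c_f(x_0)$ with $h^f_\gamma(x_0)$ for some $su$-path $\gamma$ based at $x_0$; the intertwining $g\circ h^f_\gamma = h^f_{g\circ \gamma}\circ g$ evaluated at $x_0$ reduces the question to whether $g$ fixes the intermediate $su$-vertices of every such $\gamma$. I would extract this from the polynomial normal forms on $\W^s_f(x_0)$ and $\W^u_f(x_0)$: the equivariance relation $Q^s_{f(x)}\circ P_x = P_{g(x)}\circ Q^s_x$ iterated along the $f$-orbit of $x_0$, together with the constraints coming from the transitive $su$-cycle holonomy group on $\W^c_f(x_0)$, pins down the finitely many polynomial coefficients of $Q^s_{x_0}$ and $Q^u_{x_0}$ to those of the identity; accessibility then propagates $g=\mathrm{id}$ to all of $M$. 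This is the main obstacle: since the space of polynomials commuting with a linear contraction is typically a nontrivial Lie group, freeness does not follow from the normal form alone, and must be forced by the interaction of the $f$-dynamics, the global holonomy, and accessibility. Once freeness is established, the orbit map $\mathrm{ev}_{x_0}:\CZ^r(f)\to \W^c_f(x_0)$ is an injective, proper immersion, which gives both the dimension bound $\dim\CZ^r(f)\le \dim\W^c_f(x_0)$ and properness of the action.
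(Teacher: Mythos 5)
Your overall architecture (holonomy intertwining, narrow-band normal forms, local compactness, then a Bochner--Montgomery/Montgomery--Zippin theorem) matches the paper's, but you have inverted where the difficulty lies, and the step you dismiss as routine is the one with a genuine gap. Freeness is not ``the heart of the proof.'' Since $g\in\CZ^r(f)$ fixes every center leaf and preserves $\W^{s}_f,\W^u_f$, it commutes \emph{exactly} with every $su$-holonomy between center leaves: for $y\in \W^c_f(p)$ the point $g(h^s_{f,p,q}(y))$ lies both in $\W^s_f(g(y))$ and in $\W^c_f(q)$, hence equals $h^s_{f,p,q}(g(y))$. Consequently, if $g(y)=y$ for some $y\in\W^c_f(x_0)$, then accessibility gives for every $z\in M$ an $su$-path $\gamma$ from $y$ to $z$, and $g(z)=g(h^f_\gamma(y))=h^f_\gamma(g(y))=z$, so $g=\mathrm{id}$. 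The same line shows that evaluation $g\mapsto g(x_0)$ is already injective, so the polynomial coordinates in your $\Phi$ are redundant for injectivity, and your proposed mechanism for freeness --- pinning down the coefficients of $Q^s_{x_0},Q^u_{x_0}$ through iterated equivariance relations --- is both unnecessary and, as written, a plan rather than an argument.

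The genuine gap is the sentence ``a standard equicontinuity argument shows its image is closed.'' Closedness/local compactness is precisely the hard point here, because the center leaves are non-compact (the paper flags this as the main difference from \cite{DWX23}). Given $g_n\in\CZ^r(f)$ with $g_n(x_0)\to x_*$, the holonomy commutation forces $g_n(z)\to h_{x_0\to z}(x_*)$ pointwise, but there is no a priori equicontinuity: the modulus of continuity of $z\mapsto h_{x_0\to z}(x_*)$ is governed by $su$-paths whose leg number and lengths are not uniformly controlled as $z$ varies, and holonomies distort center-leaf distances by factors exponential in $d_{\W^c}(x_0,x_*)$. The paper must prove continuity of the limit via a Baire-category ``local continuity of accessibility'' statement (Lemma \ref{ctshol}, after \cite{ASV}) combined with Lemma \ref{sucont}, and then upgrade the continuous limit to a $C^r$ diffeomorphism using the normal forms (limits of polynomials of bounded degree are polynomials), the homogeneity argument giving $C^r$ regularity along center leaves (Lemma \ref{CrWc}), and two applications of Journ\'e's lemma. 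Until you supply these steps, local compactness of $\CZ^r(f)$ --- and hence the appeal to Bochner--Montgomery --- is unfounded.
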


Theorem~\ref{czlieg} is a generalization of Theorem~3 in \cite{DWX23}, where the authors proved that for an accessible isometric extension $f_0$ of an Anosov diffeomorphism of a closed nilmanifold, the centralizer of a $C^1$ perturbation $f$ of $f_0$ is a Lie group acting freely on the compact center leaves. The main difference and difficulty here is that the center leaves may be non-compact. We postpone the proof to Section~\ref{pfczlieg}.

Applying Theorem~\ref{czlieg} to the setting of Theorem~\ref{cenrig}, we see that the center-fixing centralizer is a Lie group.

\begin{cor}
Let $f\in \mathrm{Diff}^{r+1}(G/\Gamma)$ be a perturbation of an element of the Weyl chamber flow as in Theorem~\ref{cenrig}, where $r> s(f)$. Then its center-fixing centralizer $\CZ^r(f)$ is a Lie group that acts properly and freely on a generic center leaf $\W^c_f(x_0)$. In particular, we have
\[
\dim(\CZ^r(f))\le k,
\]
where $k=\mathrm{rank}(G)$.
\end{cor}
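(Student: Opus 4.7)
The plan is to invoke Theorem~\ref{czlieg} directly, so the work reduces to checking each of its hypotheses for $f$. Throughout I take $r\ge 2$ with $r>s(f_0)$; the conclusion for smaller regularity follows from the containment $\CZ^r(f)\subseteq \CZ^{r'}(f)$ when $r\ge r'$, since any $C^r$ diffeomorphism is $C^{r'}$, and the Lie group/dimension bound for $\CZ^2(f)$ controls $\CZ^r(f)$ for larger $r$.

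Most of the hypotheses are routine and $C^1$-open. Partial hyperbolicity, dynamical coherence, and $r$-normal hyperbolicity all hold for $f$ by the proposition at the end of Subsection~\ref{prel:ph}. Accessibility of $f$ is Proposition~\ref{faccess}. Center $r$-bunching for $f_0$ is automatic: since $\exp(v)$ commutes with every element of $D=\exp(\mathfrak{h})$, the derivative $Df_0|_{E^c}$ acts as an isometry in the natural left-invariant metric, so each of the four bunching inequalities reduces to the ordinary partial hyperbolicity inequality on $E^s$ or $E^u$ and holds for a sufficiently large iterate. Since center bunching is $C^1$-open, it persists for $f$.

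The more substantive verification is global holonomy on a generic center leaf. I would use the bi-H\"older leaf conjugacy from Theorem~\ref{leafconj}: the leaf $\W^c_f(x_0)$ is homeomorphic to the $f_0$-center leaf $D\cdot x_0 \simeq D/(D\cap x_0\Gamma x_0^{-1})$, and since $G$ is $\R$-split, $D\cong \R^k$. The stabilizer $D\cap x_0\Gamma x_0^{-1}$ is a discrete subgroup of $\R^k$, and the locus of $x_0$ where this stabilizer is nontrivial is a countable union of proper real-algebraic subvarieties of $G/\Gamma$. Hence, for generic $x_0$, the center leaf $\W^c_f(x_0)$ is homeomorphic to $\R^k$, in particular contractible, and global holonomy follows from the observation made after the definition of global holonomy.

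The remaining hypothesis, narrow band spectrum with $s(f)<r$, is the point that deserves the most care and is arguably the main obstacle. For $f_0$, the Mather spectrum of $Df_0|_{E^s}$ is a finite disjoint union of circles of radii $\{e^{\lambda(v)}:\lambda\in \Phi,\ \lambda(v)<0\}$ indexed by the stable roots; each spectral band is a single circle so $\lambda_i=\mu_i$, and the narrow band condition $\mu_i+\mu_l\le \lambda_i$ collapses to $\mu_l\le 0$, which holds trivially. The critical regularity $s(f_0)=\lambda_1/\mu_l$ is a finite constant determined by $v$ and the root system $\Phi$, and the Mather spectrum depends continuously on $f$ in the $C^1$ topology; fixing $r>s(f_0)$ at the outset and taking $f$ sufficiently $C^1$-close to $f_0$ therefore preserves both the narrow band condition and the inequality $s(f)<r$. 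With every hypothesis in hand, Theorem~\ref{czlieg} yields that $\CZ^r(f)$ is a Lie group acting freely and properly on $\W^c_f(x_0)$, together with the dimension bound $\mathrm{dim}(\CZ^r(f))\le \mathrm{dim}(\W^c_f(x_0))=k$.
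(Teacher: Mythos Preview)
Your proposal is correct and follows essentially the same approach as the paper: verify the hypotheses of Theorem~\ref{czlieg} (accessibility via Proposition~\ref{faccess}, dynamical coherence via Theorem~\ref{leafconj}, center bunching and narrow band spectrum as $C^1$-open conditions satisfied by $f_0$, and global holonomy on a generic center leaf) and then apply the theorem. You supply more detail than the paper on several points---notably the narrow band computation and the contractibility of a generic leaf, which the paper handles instead by lifting to $G$---but the structure is the same. One caveat: your opening sentence about deducing the case of small $r$ from the containment $\CZ^r(f)\subseteq\CZ^{r'}(f)$ is muddled (this inclusion does not by itself transfer Lie group structure in either direction), though the paper does not address this regularity bookkeeping either and in the ambient setting $f$ is $C^\infty$.
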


\begin{proof}
We verify that $f$, as a $C^1$ perturbation of $f_0$, satisfies the conditions of Theorem~\ref{czlieg}.

Indeed, $f$ is accessible by Proposition~\ref{faccess}. By Theorem~\ref{leafconj}, $f$ is dynamically coherent. Moreover, $f$ is center $r$-bunched and satisfies the narrow band spectrum condition for any $r>s(f_0)$, since $f_0$ satisfies these conditions and these conditions are $C^1$-open.

We also note that although $f$ does not have global holonomy on every leaf of $\W^c_f$, $f$ has global holonomy when lifted to $G$, and thus has global holonomy on every generic center leaf of $\W^c_f$.

Therefore, applying Theorem~\ref{czlieg} yields the desired result.
\end{proof}

\subsection{Other applications}

We may also apply Theorem~\ref{czlieg} to the settings of \cite{DWX} and \cite{BG}. When applied to systems with $1$-dimensional center, since Lie groups acting freely and properly on $\R$ are either $\R$ or $\Z$, the center-fixing centralizer $\CZ^r(f)$ must be $\Z$ or $\R$. As a corollary, we may remove the volume-preserving assumptions in Theorem~1 of \cite{DWX} and Theorems~A, B, and~C in \cite{BG}, and obtain the following results.

\begin{thm}\label{1dcen1}
Let $\phi_t:T^1M\to T^1M$ be the geodesic flow on a closed, negatively curved, locally symmetric manifold $M$, and let $f\in \mathrm{Diff}^\infty(T^1M)$ be a $C^1$-small perturbation of $\phi_{t_0}$. Then $\mathcal{Z}^\infty(f)$ is virtually either $\Z$ or $\R$, i.e.\ $f$ either has virtually trivial centralizer or embeds into a smooth flow.
\end{thm}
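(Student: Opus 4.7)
The plan is to deduce Theorem~\ref{1dcen1} from Theorem~\ref{czlieg} by replacing the ergodicity step in \cite{DWX} --- which required volume preservation --- with the general Lie group structure theorem for center-fixing centralizers.

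First I would verify that a $C^1$-small perturbation $f$ of $\phi_{t_0}$ satisfies all the hypotheses of Theorem~\ref{czlieg}. The time-$t_0$ map of the geodesic flow on a closed, negatively curved, locally symmetric manifold is partially hyperbolic with one-dimensional center (the orbit foliation), dynamically coherent, stably accessible, and center $r$-bunched for every $r\ge 1$, since the center action is isometric and the stable/unstable contraction rates are pinched on a rank-one symmetric space. Its stable and unstable bundle maps satisfy the narrow band spectrum condition with finite critical regularity (equal to $1$ in the real hyperbolic case and to $2$ in the other rank-one cases), hence strictly less than $r=\infty$. All of these conditions are $C^1$-open and so are inherited by $f$. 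Finally, any orbit of $f$ corresponding to a non-periodic orbit of $\phi_{t_0}$ gives a contractible center leaf $\W^c_f(x_0)$, on which $f$ automatically has global holonomy.

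Applying Theorem~\ref{czlieg} at such an $x_0$, I conclude that $\CZ^\infty(f)$ is a Lie group of dimension at most one that acts freely and properly on $\W^c_f(x_0)\simeq \R$. The closed subgroups of $\mathrm{Homeo}(\R)$ that act freely and properly are exactly the trivial group, $\Z$, and $\R$, and since $f\in \CZ^\infty(f)$ acts without fixed points on the non-periodic leaf, $\CZ^\infty(f)$ must be either $\Z$ or $\R$.

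It remains to show that $\Z^\infty(f)/\CZ^\infty(f)$ is finite, so that $\Z^\infty(f)$ is virtually $\Z$ or virtually $\R$. Every $g\in \Z^\infty(f)$ commutes with $f$ and hence preserves the center foliation $\W^c_f$, inducing a homeomorphism $\bar g$ of the leaf space $T^1M/\W^c_f$ that commutes with the topologically Anosov descent $\bar f$ of $f$. By structural stability of the Anosov flow, this induced action is rigid, and finiteness of $\Z^\infty(f)/\CZ^\infty(f)$ follows exactly as in \cite{DWX}. The main obstacle I anticipate is confirming that the leaf-space step of \cite{DWX} does not secretly invoke volume preservation; in \cite{DWX} volume preservation entered only through ergodicity (via Theorem~\ref{accerg}) in order to establish the Lie group structure of $\CZ^\infty(f)$, a role now supplied by Theorem~\ref{czlieg}. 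Once this bookkeeping is verified, combining the two steps gives the desired dichotomy: $\Z^\infty(f)$ is virtually trivial (the $\Z$ case) or embeds into a smooth flow (the $\R$ case).
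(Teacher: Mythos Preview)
Your proposal is correct and follows essentially the same route as the paper: verify that the perturbation inherits the hypotheses of Theorem~\ref{czlieg} (dynamical coherence, accessibility, center bunching, narrow band spectrum, global holonomy on a non-periodic center leaf), apply that theorem to get $\CZ^\infty(f)\in\{\Z,\R\}$, and then invoke the result from \cite{DWX} (stated in the paper as Proposition~37 of \cite{DWX}) that $\mathcal{Z}^\infty(f)/\CZ^\infty(f)$ is finite without using volume preservation. Your write-up is in fact slightly more careful than the paper's, in that you explicitly note accessibility and pin down where the volume-preserving assumption entered in \cite{DWX}.
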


\begin{thm}\label{1dcen2}
Let $f$ be a partially hyperbolic $C^\infty$ diffeomorphism on a compact $3$-manifold $M$. If $f$ is a discretized Anosov flow and $\pi_1(M)$ is not virtually solvable, then either $\mathcal{Z}^\infty(f)$ is virtually trivial or $f$ embeds into a smooth Anosov flow.
\end{thm}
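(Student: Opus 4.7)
My strategy is to use Theorem~\ref{czlieg} to supply the Lie-group structure on the kernel of the short exact sequence
$$1\to \CZ^\infty(f)\to \mathcal{Z}^\infty(f)\to \mathcal{Z}^\infty(f)/\CZ^\infty(f)\to 1,$$
while importing the analysis of the quotient from \cite{BG}. The point of the reduction is that volume preservation was invoked in \cite{BG} only to establish that $\CZ^\infty(f)$ is a Lie group, and this is exactly what Theorem~\ref{czlieg} provides unconditionally under the other standing hypotheses.

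\emph{Step 1: verifying the hypotheses of Theorem~\ref{czlieg}.} For a discretized Anosov flow $f=\phi_\tau$ on a $3$-manifold $M$ with $\pi_1(M)$ not virtually solvable, the required conditions are checked as follows. Dynamical coherence is classical: $\W^c_f$ is the orbit foliation of the underlying Anosov flow, and the weak stable and weak unstable foliations provide $\W^{cs}_f,\W^{cu}_f$. Because $E^c$ is tangent to the flow direction, $\|Df^n|_{E^c}\|$ and $\|Df^{-n}|_{E^c}\|$ are uniformly bounded, so $f$ is center $r$-bunched for every $r\geq 1$, and the narrow-band condition on the one-dimensional bundles $E^s$ and $E^u$ is automatic with critical regularity $s(f)=0$. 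For accessibility, failure would force joint integrability of $E^s\oplus E^u$ on the $3$-manifold; but by the structure theory invoked in \cite{BG}, this forces $\pi_1(M)$ to be virtually solvable, contradicting our hypothesis. Finally, selecting $\W^c_f(x_0)$ to be a non-periodic orbit yields a contractible center leaf, on which global holonomy is automatic by the discussion following Theorem~\ref{c1hol}.

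\emph{Step 2: consequence of Theorem~\ref{czlieg}.} Theorem~\ref{czlieg} now gives that $\CZ^\infty(f)$ is a Lie group of dimension $\leq 1$ acting freely and properly on each center leaf. Since $f$ restricts to the center leaf as the translation by $\tau$, any smooth diffeomorphism of the leaf commuting with $f$ is itself a translation, and hence $\CZ^\infty(f)$ embeds as a closed Lie subgroup of $\R$. The only possibilities are $\R$, a discrete cyclic group, or $\{e\}$. If $\CZ^\infty(f)\cong\R$, the resulting smooth action extends $\langle f\rangle$ to a flow, and continuity of the partially hyperbolic splitting together with the uniform expansion/contraction on $E^s,E^u$ forces this flow to be Anosov, so $f$ embeds into a smooth Anosov flow. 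Otherwise, $\CZ^\infty(f)$ is discrete, and because $f\in \CZ^\infty(f)$, the cyclic subgroup $\langle f\rangle$ has finite index in $\CZ^\infty(f)$. It then remains to show that the quotient $\mathcal{Z}^\infty(f)/\CZ^\infty(f)$ is finite: this is exactly the content of the arguments in \cite{BG} on the induced action on the orbit space, which under the non-virtually-solvable assumption on $\pi_1(M)$ produce a finite quotient, and which (with the Lie-group structure of $\CZ^\infty(f)$ now supplied by Theorem~\ref{czlieg}) no longer require volume preservation. Combining gives $\mathcal{Z}^\infty(f)$ virtually trivial.

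\emph{Main obstacle.} The principal technical point is the accessibility verification under the non-virtually-solvable hypothesis on $\pi_1(M)$, which relies on the structure theory of 3-dimensional discretized Anosov flows developed in and around \cite{BG}. A secondary but essential issue is a careful audit of \cite{BG} to isolate every use of volume preservation and confirm that each such use is only to deduce the Lie-group property of $\CZ^\infty(f)$, so that Theorem~\ref{czlieg} can be substituted without disturbing the rest of the argument.
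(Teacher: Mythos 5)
Your proposal follows essentially the same route as the paper: verify the hypotheses of Theorem~\ref{czlieg} for a discretized Anosov flow on a $3$-manifold (dynamical coherence, center bunching, narrow band, accessibility, global holonomy on a non-compact leaf), conclude that $\CZ^\infty(f)$ is a Lie group acting freely and properly on a line and hence is $\Z$ or $\R$, and combine with the finiteness of $\mathcal{Z}^\infty(f)/\CZ^\infty(f)$ from \cite{BG}. One small slip: by the paper's definition the critical regularity of a one-dimensional contracting bundle satisfies $s(F)=\lambda_1/\mu_1\ge 1$, not $0$, though this does not affect the argument since all that is needed is the narrow band inequality together with $s(f)<r=\infty$.
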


We use the following result from \cite{DWX} and \cite{BG}, which shows that the centralizer is virtually center-fixing.

\begin{prop}[Proposition~37, \cite{DWX}; Proposition~2.4, \cite{BG}]
Let $f$ be a discretized Anosov flow satisfying the conditions of Theorem~\ref{1dcen1} or Theorem~\ref{1dcen2}. Then $\mathcal{Z}^r(f)/\CZ^r(f)$ is finite for $r\ge 1$.
\end{prop}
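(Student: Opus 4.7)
The plan is to analyze the homomorphism $\rho : \mathcal{Z}^r(f) \to \mathrm{Homeo}(M/\W^c_f)$ sending a centralizing element to its induced action on the leaf space of the center foliation, show that its kernel is exactly $\CZ^r(f)$, and prove that its image is finite.

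First I would verify that $\rho$ is well-defined. Any $g \in \mathcal{Z}^r(f)$ satisfies $g \circ f = f \circ g$, and since the stable and unstable bundles are dynamically characterized by the contraction/expansion rates of $Df$, we have $Dg(E^s_f) = E^s_f$ and $Dg(E^u_f) = E^u_f$, hence $Dg(E^c_f) = E^c_f$. By dynamical coherence, $g$ permutes the leaves of $\W^c_f$, so $\rho$ is well-defined, and by definition $\ker \rho = \CZ^r(f)$. Thus $\mathcal{Z}^r(f)/\CZ^r(f) \cong \rho(\mathcal{Z}^r(f))$.

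Next, I would use the discretized Anosov flow structure to identify $M/\W^c_f$ with the orbit space $\mathcal{O}_\phi$ of the underlying topological Anosov flow $\phi_t$ (accessed either directly from the definition of a discretized Anosov flow or through the leaf-conjugacy of Theorem \ref{leafconj} in the perturbative setting). The foliations $\W^s_f, \W^u_f$ descend to a pair of transverse $1$-dimensional foliations on $\mathcal{O}_\phi$, and $\bar g := \rho(g)$ must preserve this bi-foliated structure. Lifting to the universal cover, a topological plane $\widetilde{\mathcal{O}_\phi}$, any lift $\tilde{\bar g}$ is a bi-foliated self-homeomorphism normalizing the $\pi_1(M)$-action; hence $\rho(\mathcal{Z}^r(f))$ embeds into the outer bi-foliated symmetry group of $(\widetilde{\mathcal{O}_\phi}, \pi_1(M))$.

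Finally I would invoke rigidity to show this outer symmetry group is finite in both settings. In the setting of Theorem \ref{1dcen1}, the orbit space of the geodesic flow on a closed negatively curved locally symmetric $M$ is the quotient of pairs of distinct boundary points of $\widetilde M$ by $\pi_1(M)$, and the two foliations correspond to fixing each endpoint; by Mostow-type rigidity any bi-foliated automorphism must come from an isometry of $M$, and $\mathrm{Isom}(M)$ is finite. In the setting of Theorem \ref{1dcen2}, the hypothesis that $\pi_1(M)$ is not virtually solvable, together with work on bifoliated planes of transitive Anosov flows on $3$-manifolds used by Barthelm\'e--Gogolev \cite{BG}, yields finiteness of the outer bi-foliated symmetry group. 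The main technical obstacle is extracting enough rigidity from only the bi-foliated plane structure: one needs the density of closed orbits of $\phi$ and the fact that $\bar g$ permutes them while preserving their combinatorial intersection pattern with both foliations, which is the core of the arguments in \cite{DWX} and \cite{BG}.
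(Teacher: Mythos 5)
The paper does not prove this proposition; it is imported verbatim from Proposition 37 of \cite{DWX} and Proposition 2.4 of \cite{BG}. Your outline --- pass to the induced action on the leaf space, identify it with the bifoliated orbit space of the underlying Anosov flow, and invoke rigidity of that bifoliated structure (Mostow-type rigidity and finiteness of $\mathrm{Isom}(M)$ in the locally symmetric case, Barthelm\'e--Gogolev's orbit-space rigidity in the $3$-manifold case) --- is exactly the strategy of those cited proofs, so the proposal is correct and takes essentially the same approach.
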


Therefore, applying Theorem~\ref{czlieg} yields Theorem~\ref{1dcen1} and Theorem~\ref{1dcen2}.

\begin{proof}[Proof of Theorem~\ref{1dcen1} and Theorem~\ref{1dcen2}]
For Theorem~\ref{1dcen1}, since $M$ is closed, negatively curved, and locally symmetric, $\phi_{t_0}$ has narrow band spectrum; see \cite[Lemma~26]{DWX}. Therefore, $f$ also has narrow band spectrum. Moreover, $f$ is dynamically coherent by Theorem~\ref{leafconj}, and it is center $r$-bunched since the center is $1$-dimensional. Lifting $f$ to the universal cover of $M$ shows that global holonomy exists on a generic geodesic.

For Theorem~\ref{1dcen2}, since $M$ is $3$-dimensional, $f$ satisfies the narrow band spectrum and the center bunching condition. Lifting $f$ to the universal cover of $M$ shows that global holonomy exists on a generic geodesic.

Thus, we may apply Theorem~\ref{czlieg} to conclude that $\CZ^r(f)$ is a Lie group acting freely and properly on $\R$, and hence must be $\Z$ or $\R$. Since $\mathcal{Z}^r(f)/\CZ^r(f)$ is finite, the centralizer $\mathcal{Z}^r(f)$ is virtually trivial or virtually $\R$.
\end{proof}
\subsection{Proof of Theorem \ref{czlieg}}\label{pfczlieg}
In this section, we prove Theorem \ref{czlieg}. We use ideas similar to those in Section 4 of \cite{DWX23}. {We shall construct a continuous center-fixing centralizer of $f$ and show that it is closed in $\mathrm{Homeo}(M)$ and acts freely and properly on $\W^c_f(x_0)$. Then we use the normal form theory to upgrade the smoothness of the continuous maps to $C^r$. Finally, applying the Hilbert-Smith conjecture in the case of diffeomorphisms, this shows that the smooth centralizer is a Lie group.}

Let $f$ be a normally hyperbolic diffeomorphism on a manifold $M$ with center foliation $\W^c_f$. Recall that we denote by
$$\mathcal{Z}^r(f)=\{g\in \mathrm{Diff}^r(M):\; f\circ g=g\circ f\}$$
the $C^r$ centralizer of $f$, and denote
$$\CZ^r(f)=\{g\in \mathrm{Diff}^r(M):\; f\circ g=g\circ f,\; g(\W^c_f(x))=\W^c_f(x),\; \forall x\in M\}$$
to be the $\W^c_f$-fixing, $C^r$ centralizer of $f$.

Following the notations in \cite{DWX23}, we denote by
$$\CZ_*(f)=\{g\in \mathrm{Homeo}(M): f\circ g=g\circ f; g(\W^c_f(x))=\W^c_f(x), \forall x\in M; g(\W^{*}_f)=\W^{*}_f, *=s,u\}$$
the continuous centralizer of $f$ which fixes each $\W^c_f$ leaf and preserves the stable and unstable foliations of $f$. Since $f$ and its stable, unstable, and center foliations are continuous, we see that $\CZ_*(f)$ is a closed subgroup of the homeomorphism group under the $C^0$ topology.

We denote by $\CZ^r_*(f)$ the elements of $\CZ_*(f)$ which are locally uniformly (with respect to different leaves) $C^r$-smooth along the leaves of $\W^c_f$.

Clearly, by definition, we have $\CZ^r(f)\subset \CZ^r_*(f)\subset \CZ_*(f)$.

An easy consequence of $\CZ_*(f)$ preserving the $s/u$-foliations and fixing the center leaves is that it must commute with any $su$-holonomy with respect to the center leaves.

\begin{prop}\label{comhol}
Let $g\in \CZ_*(f)$, {and let $x\in \W^c_f(x_0)$. Then, for any $su$-path $\gamma$ from $x$ to $z\in M$, we have
$$g(h_\gamma(y))=h_\gamma(g(y))$$
for any $y\in \W^c_f(x_0)$. Here, $h_\gamma:\W^c_f(x)\to \W^c_f(z)$ denotes the global $su$-holonomy given by the path $\gamma$.}
\end{prop}

Using this observation and the fact that $f$ is accessible and center $r$-bunched so that the $su$-holonomies act transitively $C^r$ on every center leaf, we may prove that every continuous map in $\CZ_*(f)$ is, in fact, $C^r$ smooth along the $\W^c_f$-leaves. 

\begin{lem}\label{CrWc}
Under the assumptions of Theorem \ref{czlieg}, we have $\CZ^r_*(f)=\CZ_*(f)$. 
\end{lem}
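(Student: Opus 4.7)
The plan exploits the commutation of $g$ with the rich pseudogroup of $C^r$ $su$-holonomies on the center leaf $\W^c_f(x_0)$, together with a smooth parameterization of that leaf by $su$-paths. This rests on two inputs already established in the preceding subsections: the $C^r$-regularity of $su$-holonomies along center leaves (Theorem~\ref{c1hol}, using center $r$-bunching), and the fact (Proposition~\ref{comhol}) that every $g \in \CZ_*(f)$ commutes with every such holonomy whose endpoints lie in $\W^c_f(x_0)$.

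First, I record a \emph{propagation} identity. For any $y \in \W^c_f(x_0)$, accessibility of $f$ together with the global holonomy hypothesis produces an $su$-path $\gamma$ from $x_0$ to $y$ whose holonomy $h^f_\gamma$ is a $C^r$-diffeomorphism from a neighborhood $U(x_0) \subset \W^c_f(x_0)$ of $x_0$ onto a neighborhood $U(y) \subset \W^c_f(x_0)$ of $y$. Proposition~\ref{comhol} then gives
\[ g|_{U(y)} \;=\; h^f_\gamma \circ g|_{U(x_0)} \circ (h^f_\gamma)^{-1}|_{U(y)}, \]
so that $C^r$-regularity of $g|_{\W^c_f(x_0)}$ at any point $y$ is equivalent, with locally uniform bounds, to $C^r$-regularity at $x_0$. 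An analogous statement applies on any other center leaf via locally defined $su$-holonomies.

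Second, the \emph{seeding} step: I establish $C^r$-regularity of $g$ near $x_0$. The goal is to construct a $C^r$-smooth family $\{\gamma_y\}$ of $su$-paths from $x_0$, parameterized by $y$ in a small neighborhood of $x_0$ inside $\W^c_f(x_0)$, with $h^f_{\gamma_y}(x_0) = y$. A standard inverse-function-theorem argument, using the transversality of $E^s$, $E^c$, $E^u$ and a fixed combinatorial sequence of short stable and unstable legs of variable length, produces such a family. By commutation,
\[ g(y) \;=\; h^f_{\gamma_y}(g(x_0)), \]
and the joint $C^r$-dependence of holonomies on paths and endpoints (Theorem~\ref{c1hol}) makes the right-hand side $C^r$ in $y$. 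The propagation identity then upgrades this to $C^r$-regularity throughout $\W^c_f(x_0)$ with locally uniform bounds, giving $g \in \CZ^r_*(f)$.

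The hard part is the smooth construction of $\{\gamma_y\}$ in the seeding step: one must select a fixed combinatorial type of $su$-path whose endpoint map is a $C^r$-submersion from the product of leg-lengths onto a neighborhood of $x_0$ in $\W^c_f(x_0)$. This requires a careful use of local accessibility combined with the narrow-band spectrum hypothesis and the normal-form machinery of Subsection~2.5, which is where the $s(f) < r$ assumption enters in a nontrivial way; the normal forms guarantee sufficient regularity of the holonomies for the implicit-function argument to close. Moreover, the non-compactness of the center leaves --- the key departure from \cite{DWX23} --- means that uniformity of the $C^r$-norm of $g|_{\W^c_f(x_0)}$ across the leaf must be extracted from the continuous dependence of the $su$-holonomies on base points, rather than inherited from a compactness argument, necessitating a genuinely uniform version of the propagation identity above.
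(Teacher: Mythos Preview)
Your propagation identity is correct and matches the paper's use of Proposition~\ref{comhol}. The difficulty is in your seeding step, and here there is a real gap. You want a $C^r$ family $\{\gamma_y\}$ of $su$-paths so that the endpoint map $y\mapsto h^f_{\gamma_y}(x_0)$ is a $C^r$-diffeomorphism near $x_0$; then $g(y)=h^f_{\gamma_y}(g(x_0))$ would inherit $C^r$-regularity. But Theorem~\ref{c1hol} gives only that each individual holonomy $h^s_{p,q}$ is $C^r$ \emph{as a map between center leaves}, and that it depends \emph{$C^r$-continuously} (i.e.\ continuously in the $C^r$ topology) on the base points $p,q$. It does \emph{not} assert that the endpoint of an $su$-path is a $C^r$ function of the leg-lengths; that would require transversal $C^r$-regularity of the stable and unstable foliations, which is generally false even under center bunching (one only has H\"older transversal regularity). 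So the implicit-function argument you sketch does not close, and your appeal to the narrow-band spectrum and normal forms is misplaced: those control regularity \emph{along} $\W^s$ and $\W^u$, not the joint regularity of the endpoint map in the center direction. In fact the paper's proof of this lemma does not use the narrow-band hypothesis at all; that is reserved for the later step (proof of Theorem~\ref{czlieg}) where one upgrades regularity along $\W^s$ and $\W^u$.

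The paper avoids the seeding problem entirely by a clean abstract device: it proves (following \cite{DWX23}) that if a continuous map $g$ on a $C^r$ manifold $N$ has $\mathcal{Z}^r(g)$ acting transitively on $N$, then $g$ is $C^r$. The argument considers the graph $\Gamma(g)\subset N\times N$, observes that $\{\phi\times\phi:\phi\in\mathcal{Z}^r(g)\}$ acts transitively on it, invokes the $C^r$-homogeneity theorem of \cite{wilkcoh} to conclude $\Gamma(g)$ is a $C^r$ submanifold, and then recovers $g=p_2\circ p_1^{-1}$ via Sard's theorem. Applied with $N=\W^c_f(x_0)$, transitivity of $\mathcal{Z}^r(g|_N)$ follows immediately because it contains all the (globally defined, $C^r$) $su$-holonomies of $f$, which act transitively by accessibility. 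Uniformity across leaves then follows from the $C^r$-continuous dependence of holonomies on base points. This bypasses any need to build smooth families of paths or to invoke normal forms.
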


Furthermore, we may show that the group $\CZ_*(f)$ is locally compact, again using the fact that it commutes with the $su$-holonomies.

\begin{prop}\label{CZcpt}
Under the assumptions of Theorem \ref{czlieg}, the group $\CZ_*(f)$ {acts properly on the center leaf $\W^c_f(x_0)$.}
\end{prop}

We postpone the proofs of Lemma \ref{CrWc} and Proposition \ref{CZcpt} to Section \ref{pflem}.

Now we prove Theorem \ref{czlieg}.
\begin{proof}[Proof of Theorem \ref{czlieg}]
Since $f$ is accessible, the $su$-holonomies of $f$ act transitively on any $\W^c_f$-leaf. {Therefore, for any $x,y\in \W^c_f(x_0)$, there exists a global $su$-holonomy $h_{x\to y}:\W^c_f\to \W^c_f$ given by an $su$-path from $x$ to $y$. By Proposition \ref{comhol}, any element $g\in \CZ_*(f)$ must satisfy $g(y)=g(h_{x\to y}(x))=h_{x\to y}(g(x))$. This implies that if $g(x)=x$ for some $x\in \W^c_f(x_0)$, then $g(y)=y$ for any $y\in \W^c_f(x_0)$.} Therefore, by Lemma \ref{CrWc} and Proposition \ref{CZcpt}, $\CZ_*^r(f)$ {acts freely and properly} by diffeomorphisms on $\W^c_f(x_0)$. 

{We now show that the group $\CZ^r(f)\subset \CZ_*(f)$ is closed under the $C^0$ topology in $\mathrm{Homeo}(M)$.}

Suppose {$g_n\in \CZ^r(f)$, $g_n\to g\in \CZ_*(f)$.} Since $f$ satisfies the narrow band spectrum, by normal form theory {(see Theorem \ref{normalform})}, any $g_n\in \CZ^r(f)$ can be written as a polynomial map of bounded degree and coefficients under a locally uniform $C^\infty$ coordinate change; thus $g$ is also locally uniformly $C^\infty$ smooth along {$\W^s_f$} and $\W^u_f$. Moreover, $g$ is $C^r$ along $\W^c_f$ by Lemma \ref{CrWc}. Therefore, since $f$ is dynamically coherent, by Journ\'e's lemma \cite{Journe}, $g$ is $C^r$ smooth along $\W^{sc}_f$. Applying Journ\'e's lemma again, we see that $g$ is $C^r$ smooth on $M$.

Therefore, $\CZ^r(f)$ is a closed subgroup of $\CZ_*(f)$, hence it is a locally compact group acting freely and properly on $\W^c_f(x_0)$ by $C^r$ {diffeomorphisms. Therefore, by the Hilbert-Smith conjecture in the case of diffeomorphisms, see \cite[Theorem 2]{Hilsch}, $\CZ^r(f)$ is a Lie group acting freely on $\W^c_f(x_0)$. This gives us the desired result.}
\end{proof}

\subsection{Proof of local compactness}\label{pflem}

We first use the following proposition, which is a slight variant of Proposition 11 in \cite{DWX23}, to show that we can raise the regularity of $g\in \CZ_*(f)$ to $C^r$ on the center leaves.

\begin{prop}
Let {$g:N\to N$} be a homeomorphism on a $C^r$ Riemannian manifold {$N$}, where $1\le r\le \infty$. If $\mathcal{Z}^r(g)$ acts transitively on {$N$}, then $g$ is a $C^r$ diffeomorphism of $N$.
\end{prop}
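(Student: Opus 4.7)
The plan is to use the transitivity and commutation relations in $\mathcal{Z}^r(g)$ to transfer local regularity of $g$ around a single base point to all of $M$, so the main task is to establish $C^r$-regularity of $g$ at one point.

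First I would fix a base point $x_0 \in M$ and set $y_0 = g(x_0)$. For any $x \in M$, transitivity gives some $h \in \mathcal{Z}^r(g)$ with $h(x_0) = x$, and the commutation $g \circ h = h \circ g$ immediately yields
\[
g(x) = g(h(x_0)) = h(g(x_0)) = h(y_0).
\]
Surjectivity of $g$ follows at once: given any $y \in M$, pick $h \in \mathcal{Z}^r(g)$ with $h(y_0) = y$, and then $g(h(x_0)) = h(y_0) = y$. Moreover, the set of points where $g$ is $C^r$ is $\mathcal{Z}^r(g)$-invariant, because $g = h \circ g \circ h^{-1}$ as maps, and $h, h^{-1}$ are $C^r$. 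By transitivity, $g$ is either $C^r$ everywhere or nowhere, so it suffices to establish $C^r$-regularity of $g$ on some open neighborhood $U$ of $x_0$.

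For the regularity at $x_0$ I would aim to construct a local $C^r$-parametrization of a neighborhood $U \ni x_0$ by elements of $\mathcal{Z}^r(g)$: a $C^r$ map $\Psi : V \to \mathcal{Z}^r(g)$ from a Euclidean neighborhood $V$ of the origin, with $\Psi(0) = \mathrm{id}$, such that $\sigma : v \mapsto \Psi(v)(x_0)$ is a local $C^r$-diffeomorphism from $V$ onto $U$. Given such a parametrization, the identity
\[
g\bigl(\Psi(v)(x_0)\bigr) = \Psi(v)(y_0)
\]
exhibits $g \circ \sigma$ as a $C^r$ map of $v$, since the evaluation map $h \mapsto h(y_0)$ is $C^r$-continuous in the $C^r$-topology on $\mathrm{Diff}^r(M)$. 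Composing with $\sigma^{-1}$ gives that $g|_U$ is $C^r$, and a symmetric argument at $y_0$ produces a $C^r$ local inverse, showing $g$ is a local $C^r$-diffeomorphism at $x_0$. Bijectivity globally then follows from transitivity: local injectivity transfers by the commutation relation, and local invertibility combined with surjectivity of $g$ upgrades $g$ to a $C^r$-diffeomorphism of $M$.

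The main obstacle is the construction of the local $C^r$-parametrization $\Psi$, since a priori $\mathcal{Z}^r(g)$ is only a subgroup of $\mathrm{Diff}^r(M)$ and need not be a Lie group. The plan here follows the strategy of Proposition~11 in \cite{DWX23}: using continuity of composition and evaluation in the $C^r$-topology, one selects finitely many elements $h_1,\dots,h_d \in \mathcal{Z}^r(g)$ (with $d = \dim M$) whose action at $x_0$ is sufficiently non-degenerate to span a transverse coordinate system, and assembles a local $C^r$-parametrization from their compositions, invoking a Montgomery--Zippin / Bochner-style argument on an appropriate locally compact closure within $\mathcal{Z}^r(g)$ to obtain smooth one-parameter families when required. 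In the settings of the present paper this step is typically simplified because $\mathcal{Z}^r(g)$ is already known to contain a transitive Lie subgroup, but the essential idea is the same.
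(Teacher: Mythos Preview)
Your reduction to ``$C^r$ at one point implies $C^r$ everywhere'' via the conjugation $g = h \circ g \circ h^{-1}$ is correct and matches the paper. The gap is in the construction of the local section $\Psi : V \to \mathcal{Z}^r(g)$. Nothing in the hypotheses gives $\mathcal{Z}^r(g)$ any Lie-group structure, local compactness, or even a single nontrivial one-parameter family; your appeal to Montgomery--Zippin presupposes local compactness, and ``selecting finitely many elements $h_1,\dots,h_d$ whose action at $x_0$ is non-degenerate'' produces only a countable lattice $\{h_1^{n_1}\cdots h_d^{n_d}(x_0)\}$, not a $C^r$ chart. You are right that in the paper's specific application the centralizer contains the $su$-holonomies, which do supply smooth transitive families, and your argument would go through there; but as a proof of the general proposition as stated, the existence of $\Psi$ is exactly the nontrivial content, and you have not supplied it.

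The paper's proof avoids this entirely by passing to the graph $\Gamma(g)=\{(x,g(x))\}\subset M\times M$. The action $\phi\mapsto \phi\times\phi$ of $\mathcal{Z}^r(g)$ is transitive on $\Gamma(g)$ by $C^r$ diffeomorphisms of the ambient manifold, so $\Gamma(g)$ is a $C^r$-homogeneous topological manifold; a theorem of Wilkinson (Theorem~B of \cite{wilkcoh}) then gives $\Gamma(g)$ a compatible $C^r$ structure. The first projection $p_1:\Gamma(g)\to M$ is a $C^r$ bijection between manifolds of equal dimension, so Sard's theorem produces a point where $p_1$ has a local $C^r$ inverse, and $g=p_2\circ p_1^{-1}$ is $C^r$ there. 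Transitivity then propagates regularity everywhere, exactly as in your last step. The point is that the homogeneity theorem for \emph{finite-dimensional} manifolds replaces any need to find smooth structure on the (possibly infinite-dimensional, possibly non-locally-compact) group $\mathcal{Z}^r(g)$ itself.
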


\begin{proof}
Take the image manifold $\Gamma(g)=\{(x,g(x))\in N\times N:x\in N\}\subset N\times N$. Since $\mathcal{Z}^r(g)$ acts transitively on $N$, it induces a transitive action by $C^r$ maps on $\Gamma(g)$ via $\{\phi\times \phi:\phi\in \mathcal{Z}^r(g)\}$. Thus, the manifold $\Gamma(g)$ is a locally compact $C^r$-homogeneous manifold. Therefore, by Theorem B of \cite{wilkcoh}, the manifold $\Gamma(g)$ is $C^r$.

Take the projections $p_1,p_2$ from $N\times N\to N$. They are clearly $C^r$ homeomorphisms from $\Gamma(g)$ to $N$. Using Sard's Theorem, we may pick a point $x\in\Gamma(g)$ such that $p_1$ has a $C^r$ inverse in a neighborhood $U_x\subset \Gamma(g)$ of $x$. Therefore, $g=p_2\circ p_1^{-1}$ is a $C^r$ map on $U_x$. Now, for any other $y\in N$, picking $h\in \mathcal{Z}^r(g)$ with $h(x)=y$, we see that $g\mid_{h(U)}=h\circ g\mid_U \circ h^{-1}$ is also $C^r$. Therefore, $g$ is $C^r$ at any $y\in N$, hence it is a $C^r$ diffeomorphism of $N$.
\end{proof}

Applying the proposition to $N=\W^c_f(x_0)$ and $g$ restricted to $\W^c_f(x_0)$, we may prove Lemma \ref{CrWc}.

\begin{proof}[Proof of Lemma \ref{CrWc}]
By $r$-bunching of $f$, the global $su$-holonomies of $f$ from $\W^c_f(x_0)$ to itself {are} $C^r$. Since $f$ is accessible, the global $su$-holonomies act transitively on $\W^c_f(x_0)$. Therefore, for any $g\in \CZ_*(f)$, $\mathcal{Z}^r(g)$ acts transitively on $\W^c_f(x_0)$ since it contains all the $su$-holonomies of $f$ from $\W^c_f(x_0)$ to itself.

Furthermore, $g$ is uniformly $C^r$ between nearby leaves since it commutes with holonomies between $\W^c_f$ leaves, and the holonomies are $C^r$-continuous with respect to the base points by Theorem \ref{c1hol}.

Therefore, we have $\CZ_*(f)=\CZ_*^r(f)$. This proves Lemma \ref{CrWc}.
\end{proof}

Now we proceed to prove Proposition \ref{CZcpt}.

Take a compact subset $K\subset \W^c_f(x_0)$. Suppose $g_n\in \CZ_*(f)$ such that $g_n(K)\cap K\neq \varnothing$ for all $n\in \N$. Picking a subsequence if necessary, we may assume that for some $x_n\in K$, $g_n(x_n)\in K$ and $x_n\to x$ as $n\to \infty$. By Proposition \ref{comhol}, this implies that $g_n(x)=h_{x_n\to x}(g_n(x_n))$ for any global $su$-holonomies from $x_n$ to $x$. Since $x_n,x,g_n(x_n)\in K$, we may choose the holonomies to be of bounded length and legs. This shows that $g_n(x)$ lies in a compact subset of $\W^c_f(x_0)$.

Therefore, by passing to a subsequence, we have $g_n(x)\to x_*$ for some $x_*\in \W^c_f(x_0)$. Using Proposition \ref{comhol} again, this shows that for any global $su$-holonomy $h_{x\to z}$ connecting $x$ and $z\in M$, we have
$$g_n(z)=g_n(h_{x\to z}(x))=h_{x\to z}(g_n(x))\to h_{x\to z}(x_*)$$
as $n\to \infty$.

Therefore, for any $z\in M$, the limit $\lim_{n\to \infty} g_n(z)$ exists and is equal to
$$\lim_{n\to \infty} g_n(z)=h_{x\to z}(x_*)$$
for any global $su$-holonomy $h_{x\to z}$ from $x$ to $z$.

To prove Proposition \ref{CZcpt}, we only need to show that the map $M \to M: z\mapsto h_{x\to z}(x_*)$ is continuous: if $g_n$ converges to a continuous map $g$, then since $f$ and the stable, unstable, and center foliations are continuous, $g$ also commutes with $f$, fixes $\W^c_f$, and preserves $\W^s_f$ and $\W^u_f$, hence $g\in \CZ_*(f)$.

The idea is first to show that one $su$-holonomy can be approximated by nearby $su$-holonomies that cover an open neighborhood, and then use accessibility of $f$ to carry this property to the whole manifold.

First, we define the following notion for two $su$-paths to be close.

\begin{defi}[$\epsilon$-near $su$-path]
We say that two $su$-paths $\gamma_1=[x_0,x_1,...,x_k]$ and $\gamma_2=[y_0,y_1,...,y_k]$ are $\epsilon$-near if each leg is $\epsilon$-near.

More precisely, $\gamma_1$ and $\gamma_2$ are $\epsilon$-near if $d_{\W^c}(x_i,y_i)<\epsilon$ and
$$d_M([x_i,x_{i+1}],[y_i,y_{i+1}])<\epsilon,$$
where $[x_i,x_{i+1}]$ is the $s/u$-segment in $\gamma_1$ starting at $x_i$ and ending at $x_{i+1}$, and $[y_i,y_{i+1}]$ is defined similarly.
\end{defi}

It is clear by definition and by the compactness of $M$ that near-enough $su$-paths give $su$-holonomies that are $C^0$-close.

\begin{lem}\label{sucont}
Fix $x_0\in M$. For every $\epsilon>0$, $R>0$, $k\in \N$, there exists $\delta>0$ such that for any $\delta$-near $su$-paths $\gamma_1$ and $\gamma_2$ with $\le k$ legs and lengths $\le R$ from $x_1\in \W^c(x_0)$, $d_{\W^c}(x_0,x_1)\le R$ to $y_1\in \W^c(z)$ and $x_2\in \W^c(x_0)$ to $y_2\in \W^c(z)$ respectively, the global $su$-holonomies $h_1$ and $h_2$ along $\gamma_1$ and $\gamma_2$ satisfy
$$d_{\W^c}(h_1(x_0),h_2(x_0)) <\epsilon.$$
\end{lem}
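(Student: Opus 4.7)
The plan is to prove this by induction on the number of legs $k$, using Theorem~\ref{c1hol} to control how a single stable or unstable holonomy depends on its basepoints, and using compactness of $M$ to obtain uniform moduli of continuity.

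For the base case $k=1$, each path $\gamma_i$ consists of a single stable (resp.\ unstable) segment $[p_i,q_i]$. Theorem~\ref{c1hol} says that the local holonomy $h^{s/u}_{f,p,q}$ between center leaves depends $C^r$-continuously on $(p,q)$ with respect to the $C^0$ topology. Since $M$ is compact and the segment lengths are uniformly bounded by $R$, this continuity is uniform: for every $\epsilon_1>0$ there exists $\delta_1=\delta_1(\epsilon_1,R)>0$ such that if $d_M(p_1,p_2),\, d_M(q_1,q_2)<\delta_1$ and the two segments are of the same type, then $d_{\W^c}\bigl(h^{s/u}_{f,p_1,q_1}(z),\, h^{s/u}_{f,p_2,q_2}(z)\bigr)<\epsilon_1$ uniformly for $z$ in a bounded $\W^c$-neighbourhood of the basepoint.

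For the inductive step, assume the lemma holds for paths of at most $k-1$ legs, and let $\gamma_i=[p^i_0,p^i_1,\ldots,p^i_k]$ be $\delta$-near $k$-legged paths. Decompose $\gamma_i=\gamma_i'*\eta_i$, where $\gamma_i'$ consists of the first $k-1$ legs and $\eta_i=[p^i_{k-1},p^i_k]$ is the final leg. The inductive hypothesis applied to $\gamma_1',\gamma_2'$ gives that, after shrinking $\delta$, we may force $d_{\W^c}\bigl(h_{\gamma_1'}(x_0),h_{\gamma_2'}(x_0)\bigr)<\epsilon'$ for any prescribed $\epsilon'>0$, and we may also arrange $d_M(p^1_{k-1},p^2_{k-1})<\epsilon'$ and $d_M(p^1_k,p^2_k)<\epsilon'$ since the paths are $\delta$-near. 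Applying the base case to the single legs $\eta_1,\eta_2$, which are themselves $\delta$-near and of bounded length, yields $d_{\W^c}(h_1(x_0),h_2(x_0))<\epsilon$, provided $\epsilon'$ is chosen sufficiently small with respect to the modulus of continuity of a single stable or unstable holonomy supplied by Theorem~\ref{c1hol}.

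The main obstacle is bookkeeping the uniform constants across the induction: at each of the $k$ steps $\delta$ must be shrunk by a factor depending on the modulus of continuity of the surrounding holonomies, and one must certify that this process terminates with a strictly positive $\delta$. This is where compactness is essential: because $M$ is compact and each path has total length at most $R$, all intermediate basepoints lie in a fixed compact subset of $M$, on which Theorem~\ref{c1hol} guarantees a uniform lower bound for the modulus of continuity of single-leg holonomies. Iterating this uniform estimate $k$ times produces the desired $\delta=\delta(\epsilon,R,k)>0$.
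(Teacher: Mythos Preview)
Your argument is correct and is exactly the natural way to make rigorous what the paper leaves unproved: the paper simply asserts that the lemma ``is clear by definition and by the compactness of $M$'' and gives no further argument. Your induction on the number of legs, using Theorem~\ref{c1hol} for the uniform continuity of a single stable/unstable holonomy in its basepoints and compactness of $M$ to make all moduli uniform, is precisely how one unwinds that assertion; note that the definition of $\epsilon$-near already forces corresponding endpoints $p^1_j,p^2_j$ to lie on the \emph{same} center leaf, so in the inductive step the triangle inequality $d\bigl(h_{\eta_1}(z_1),h_{\eta_2}(z_2)\bigr)\le d\bigl(h_{\eta_1}(z_1),h_{\eta_1}(z_2)\bigr)+d\bigl(h_{\eta_1}(z_2),h_{\eta_2}(z_2)\bigr)$ goes through cleanly with $z_i=h_{\gamma_i'}(x_0)$.
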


Furthermore, we have the following result on ``continuity" of $su$-paths.

\begin{lem}[Local continuity of holonomy, see Proposition 7.3 \cite{ASV}]\label{ctshol}
Let $f$ be a partially hyperbolic diffeomorphism on a complete Riemannian manifold $M$. Suppose $f$ is accessible. Then for any $x\in M$, there exist $y\in M$ and a $su$-path $\gamma$ from $x$ to $y$ such that for any $\epsilon>0$, there exists $\delta>0$ so that for any $z\in B(y,\delta)$, there exists a $su$-path from $x$ to $z$ that is $\epsilon$-near $\gamma$.
\end{lem}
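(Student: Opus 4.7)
The plan is to realize the desired $y$ and $\gamma$ as a regular value and regular point of an endpoint map defined on the space of $su$-paths starting at $x$, and then deduce the local covering property from the implicit function theorem. For each $k \in \N$ and combinatorial type $T = (t_1, \dots, t_k) \in \{s, u\}^k$, I would let $\mathcal{P}_{k,T}(x)$ denote the space of $k$-legged $su$-paths from $x$ whose $i$-th leg lies in the $\W^{t_i}$-leaf through the $(i-1)$-st breakpoint. Using local exponential charts on successive stable and unstable leaves, $\mathcal{P}_{k,T}(x)$ acquires the structure of a $C^r$ manifold, and the endpoint map $F_{k,T} : \mathcal{P}_{k,T}(x) \to M$ sending a path to its final point is $C^r$-smooth, where $r$ matches the smoothness of the invariant foliations.

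Next, I would argue that there exist $k, T$ and $\gamma^* \in \mathcal{P}_{k,T}(x)$ at which $dF_{k,T}(\gamma^*)$ is surjective. Suppose for contradiction that for all $k, T$ and all $\gamma \in \mathcal{P}_{k,T}(x)$, the differential $dF_{k,T}(\gamma)$ has rank strictly less than $\dim M$. Restricting to paths whose legs have length bounded by some $R > 0$ makes the domain a compact finite-dimensional $C^r$ manifold on which every point is critical; Sard's theorem (valid once the foliations are smooth enough) then implies that each such restricted $F_{k,T}$ has image of Lebesgue measure zero in $M$. Taking the countable union over $k$, $T$, and $R$ would force $M$ itself to have measure zero, contradicting accessibility, which asserts $\bigcup_{k,T} F_{k,T}(\mathcal{P}_{k,T}(x)) = M$.

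Fixing such a $\gamma^*$ and setting $y := F_{k,T}(\gamma^*)$, the map $F_{k,T}$ is a $C^r$ submersion at $\gamma^*$, hence open there by the implicit function theorem. Given $\epsilon > 0$, continuity of the parameterization provides $\delta' > 0$ such that the $\delta'$-neighborhood of $\gamma^*$ in $\mathcal{P}_{k,T}(x)$ consists of paths $\epsilon$-near $\gamma^*$ in the sense defined before Lemma \ref{sucont}. Openness of $F_{k,T}$ at $\gamma^*$ then yields $\delta > 0$ with $B(y, \delta) \subset F_{k,T}(B(\gamma^*, \delta'))$, so that each $z \in B(y, \delta)$ has a preimage $\gamma \in B(\gamma^*, \delta')$ which is an $su$-path from $x$ to $z$ that is $\epsilon$-near $\gamma^*$. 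Taking $\gamma := \gamma^*$ in the statement completes the proof.

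The main obstacle is the Sard-type step: the dimension of $\mathcal{P}_{k,T}(x)$ grows linearly with $k$ while $\dim M$ is fixed, so classical Sard imposes the lower bound $r > \dim \mathcal{P}_{k,T}(x) - \dim M$ on the foliation regularity. In the $C^\infty$ setting---which covers all applications of this lemma in the present paper---this is vacuous, but in the finite-regularity generality of the stated lemma one must refine the argument, for instance by invoking the constant-rank theorem to factor $F_{k,T}$ through a manifold of dimension at most $\dim M$ before applying Sard. This refinement is presumably one reason the original proof in the cited reference proceeds somewhat differently from the outline above.
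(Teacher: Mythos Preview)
Your approach has a real gap at its foundation. You assert that the endpoint map $F_{k,T}$ is $C^r$ ``where $r$ matches the smoothness of the invariant foliations,'' and then proceed as if $r \ge 1$ so that Sard's theorem and the implicit function theorem are available. But for a general partially hyperbolic diffeomorphism---even a $C^\infty$ one---the stable and unstable foliations are typically only H\"older continuous transversally. The individual leaves are as smooth as $f$, but as the endpoint $x_1$ of the first leg varies within $\W^s(x)$, the leaf $\W^u(x_1)$ varies only H\"older-continuously, so the composition defining $F_{k,T}$ is merely continuous. Neither Sard nor the implicit function theorem then applies, and your suggested refinement via the constant-rank theorem does not help either, since that too requires at least $C^1$ regularity.

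Your remark that ``the $C^\infty$ setting covers all applications of this lemma in the present paper'' is where the misapprehension lies. The lemma is invoked in the proof of Proposition~\ref{CZcpt} for the \emph{perturbation} $f$, not for $f_0$. While $f_0$ has smooth invariant foliations, a $C^1$-perturbation $f$ generically has only H\"older strong foliations. The center-bunching hypothesis used in the paper (Theorem~\ref{c1hol}) yields $C^r$ regularity of the stable and unstable \emph{holonomies between center leaves}, which is far weaker than $C^r$ regularity of the strong foliations themselves and does not make $F_{k,T}$ differentiable.

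The paper's route, following \cite{ASV}, uses only continuity of the endpoint map together with the Baire category theorem: since the images of the compact parameter spaces (paths with a bounded number of legs and bounded leg-lengths) are closed and cover $M$ by accessibility, some such image has nonempty interior, and a further Baire-type selection produces a parameter $\gamma$ at which the endpoint map is open in the required sense. This avoids differentiability entirely and is why the argument works at the stated level of generality.
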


The lemmas use the same proof as Proposition 7.3 in \cite{ASV}, which employs a Baire category argument, although the formulations there are slightly different.

Now we are ready to prove Proposition \ref{CZcpt} on the local compactness of $\CZ_*(f)$ in $\mathrm{Homeo}(M)$.

\begin{proof}[Proof of Proposition \ref{CZcpt}]
{Let $K$ be a compact subset of $\W^c_f(x_0)$. Let $g_n\in \CZ_*(f)$ be a sequence such that $g_n(K)\cap K\neq \varnothing$. By passing to a subsequence, we assume $g_n(x)\to x_*$ for some $x,x_*\in \W^c_f(x_0)$. By the analysis above, for any $z\in M$, $g_n(z)$ converges to $h_{x\to z}(x_*)$ for any $su$-holonomy from $x$ to $z$. We first prove that the map $g: z\mapsto h_{x\to z}(x_*)$} is continuous.

Pick $y$ and $\gamma$ as in Lemma \ref{ctshol}. We claim that $z\mapsto h_{x\to z}(x_*)$ is continuous at $y$. Indeed, pick $R=\max\{l_{su}(\gamma),d_{\W^c}(x_0,y)\}$, where $l_{su}(\gamma)$ denotes the $su$-length of $\gamma$. By Lemma \ref{sucont}, for any $\epsilon>0$, there exists $\delta_1>0$ such that for any $\gamma^\prime$ from $x$ to $z\in \W^c$ that is $\delta_1$-near $\gamma$, we have {$d_{\W^c}(h_{x\to y}(x_*),h_{x\to z}(x_*))<\epsilon$}. Furthermore, from Lemma \ref{ctshol}, there exists $\delta>0$ such that for any $z\in B(y,\delta)$, there exists a $su$-path from $x$ to $z$ that is $\delta_1$-near $\gamma$. Therefore, for any $z\in B(y,\delta)\cap \W^c(x)$, we have {$d_{\W^c}(h_{x\to y}(x_*),h_{x\to z}(x_*))<\epsilon$}. This shows that $h_{x\to z}(x_*)$ is continuous at $z=y$.

Now, for any other point $y^\prime\in M$, take a $su$-holonomy $h_{y\to y^\prime}$ that takes $y$ to $y^\prime$. Then $h_{y\to y^\prime}$ is a diffeomorphism on $\W^c$. Thus, it takes the neighborhood $B(y)$ to an open neighborhood $B(y^\prime)$. For any $z^\prime\in B(y^\prime)$, take $z$ to be its preimage in $B(y)$ under $h_{y\to y^\prime}$. Then we have
$$h_{x\to z^\prime}(x_*)=h_{z\to z^\prime}h_{x\to z}(x_*)=h_{y\to y^\prime}h_{x\to z}(x_*).$$
The map $h_{y\to y^\prime}h_{x\to z}(x_*)$ is continuous at $y^\prime$ with respect to $z^\prime$ since $z^\prime \mapsto z$ is smooth and $z\mapsto h_{x\to z}(x_*)$ is continuous at $y$. Thus $h_{x\to z}(x_*)$ is also continuous at any $z=y^\prime\in M$. Therefore, $g_n$ converges to a continuous map $g$ if the sequence $g_n(x_*)$ converges.

{Since $f$ and its stable, unstable, and center foliations are continuous, we see that $g$ commutes with $f$, preserves the stable and unstable foliations, and fixes the center foliation. Passing to a subsequence if necessary, we may apply the same argument to $g_n^{-1}$, which must converge to another continuous map $g_1$ on $M$ showing $g_1\circ g=\mathrm{id}$. This shows that $g$ is a homeomorphism on $M$. In conclusion, we have a subsequence of $g_n$ converging to $g\in \CZ_*(f)$. This shows that the action of $\CZ_*(f)$ on $\W^c_f(x_0)$ is proper.}
\end{proof}

\section{The centralizer virtually fixes the center leaves}\label{centgp}

In this section, we consider $M=X=G/\Gamma$, and let $f$ be a perturbation of an element of the Weyl chamber flow as in Theorem \ref{cenrig}. We directly apply the following result of Witte \cite{Witte} to show that $\CZ^r(f)$ is a finite-index subgroup of $\mathcal{Z}^r(f)$.

\begin{thm}[See Theorem 1.1 of \cite{Witte}]\label{witte}
Let $G$ be a connected semisimple Lie group with finite center and no compact factor, and let $\Gamma$ be a lattice in $G$ such that $G$ has no normal subgroup $N\simeq PSL(2,\R)$ with $N\Gamma$ closed in $G$. Let $D$ be a connected Lie subgroup of $G$, and let $\mathcal{F}$ be the foliation of $G/\Gamma$ by cosets of $D$. {Suppose $\mathcal{F}$ has a dense leaf.}

Then for any homeomorphism $f: G/\Gamma\to G/\Gamma$ that preserves the leaves of $\mathcal{F}$, if the induced homomorphism $f_*\in \mathrm{Out}(\Gamma)$ is trivial, then $f$ must be a homeomorphism $G/\Gamma\to G/\Gamma$ that fixes every leaf of $\mathcal{F}$.
\end{thm}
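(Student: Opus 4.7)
Witte's theorem is a rigidity statement: the group of leaf-preserving homeomorphisms of $\mathcal{F}$ is, up to a uniformly bounded power, generated by the affine group of $G/\Gamma$ and the leaf-fixing subgroup. My strategy would combine Ratner-type measure rigidity for the $D$-action with the structural rigidity of $\mathrm{Aff}(G/\Gamma)$. The hypothesis excluding a normal subgroup $N\simeq PSL(2,\R)$ with $N\Gamma$ closed is exactly what rules out the rank-one flexibility (e.g., geodesic flow on a hyperbolic surface bundle), where many non-algebraic foliation-preserving homeomorphisms exist by Sullivan-type constructions.

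First I would analyze the measure theory of the $D$-action. Since $\mathcal{F}$ has a dense leaf and $D$ is connected and unimodular, I would invoke Ratner's orbit and measure classification theorems (for the unipotent radical of the Zariski closure of $D$, extended to $D$ via a Mautner/Shah-type argument) to conclude that (i) the $D$-orbit closures in $G/\Gamma$ are homogeneous, (ii) the density of one leaf forces the ambient space to be the unique closure of full support, and (iii) the $D$-invariant probability measures of full support are parametrized, up to positive scalars, by a single algebraic measure $\mu$ coming from Haar measure on $G$. Since $f$ permutes the leaves of $\mathcal{F}$, the push-forward $f_*\mu$ is again $D$-invariant of full support, hence $f$ preserves $\mu$ up to a scalar, and by finiteness of $\mu$ it must preserve $\mu$ exactly.

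Next, I would exploit this measure invariance to extract algebraic structure. The idea is that any $\mu$-preserving, $\mathcal{F}$-preserving homeomorphism, via the measurable cocycle it induces for the $D$-action, must come from a group automorphism of $G$ preserving $D$ (modulo leaf-fixing maps), by Zimmer-type cocycle rigidity applied to the higher-rank semisimple $G$. This produces a measurable affine factor $\phi \in \mathrm{Aff}(G/\Gamma)$ with $\phi^{-1}f$ $\mu$-a.e.\ leaf-fixing. Using continuity of $f$ together with density of generic leaves and continuity of the $su$-holonomies, one promotes this equality to hold everywhere: $\phi^{-1}f$ is a genuine leaf-fixing homeomorphism. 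To obtain the uniform power $k$, I would observe that the component group of the normalizer $N_G(D) \rtimes \mathrm{Aut}(G,\Gamma)$ modulo the leaf-fixing subgroup is finite, bounded by $|\mathrm{Out}(G)|$ times finite index data depending only on $G$; taking $k$ to be this bound makes $\phi^k$ lie in the identity component, so $\phi^k$ itself is affine and $f^k = \phi^k \circ \psi$ with $\psi$ leaf-fixing.

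The main obstacle is the passage from measurable to topological: the cocycle rigidity step naturally produces an $\mathrm{Aut}(G)$-valued cocycle that is only a.e.\ defined, and matching $\phi^{-1}f$ to a leaf-fixing homeomorphism everywhere (not just a.e.) requires uniform control of convergence along $D$-orbits. This is where the unimodularity and the ``no closed $PSL(2,\R)\cdot \Gamma$ factor'' hypotheses are used decisively, as they guarantee ergodicity and mixing of the $D$-action strong enough to upgrade measurable coincidences into pointwise ones via Moore's ergodicity and Mautner's phenomenon.
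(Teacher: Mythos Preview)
The paper does not give its own proof of this statement; it is quoted directly from Witte, with only a one-line remark explaining how to pass from trivial to finite center. That remark is the essential hint: Witte's argument goes through the induced automorphism $f_*:\Gamma\to\Gamma$ on the fundamental group and applies \emph{Mostow rigidity} to extend $f_*$ to an automorphism of $G$, producing the affine factor $\phi$; one then shows $\phi^{-1}f$ acts trivially on $\Gamma$ and is leaf-fixing. The bounded power $k$ arises because with finite center $\pi_1(G/\Gamma)$ is a finite extension of $\Gamma$, so a bounded power of $f_*$ descends to $\Gamma$ and Mostow applies.

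Your route through Ratner measure rigidity and Zimmer cocycle superrigidity has a genuine gap. Ratner's theorems require that $D$ be generated by $\mathrm{Ad}$-unipotent elements; your suggested workaround via ``the unipotent radical of the Zariski closure of $D$'' and a Mautner argument does not rescue this, because in the very case the paper cares about, $D$ is the maximal split torus, which is reductive and contains \emph{no} unipotents at all. For such $D$ the classification of $D$-invariant probability measures is the notoriously open Margulis conjecture, and your assertion that the full-support $D$-invariant measure is unique up to scalar is not available. Even granting measure preservation, the Zimmer cocycle step is formulated for smooth or measurable actions of a higher-rank lattice on a target, not for a single homeomorphism commuting with a $D$-action, so the passage to an $\mathrm{Aut}(G)$-valued cocycle is not set up correctly. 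The Mostow/fundamental-group approach sidesteps all of this: it uses only that $f$ is a homeomorphism (hence induces an outer automorphism of $\Gamma$) and the hypothesis excluding $PSL(2,\R)$ factors is exactly what makes Mostow applicable.
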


\begin{rmk}
In \cite{Witte}, the theorem was stated for a semisimple Lie group $G$ with trivial center instead of finite center, and the conclusion was $f$ would be a composition of an affine map and a homeomorphism that fixes every leaf of $\mathcal{F}$. The proof in \cite{Witte} only used the trivial center assumption to apply Mostow-Margulis rigidity theorem, and to use the affine map to make $f_*$ trivial. Here, we simply add the assumption that $f_*:\Gamma\to\Gamma$ is trivial.
\end{rmk}

Applying this to our setting, and using the leaf conjugacy to bring the diffeomorphisms $g\in \mathcal{Z}^r(f)$ to homeomorphisms preserving cosets of $D$, we see that $\CZ^r(f)$ is a finite-index subgroup of $\mathcal{Z}^r(f)$.

\begin{cor}
Let $f_0,f,X$ be as in Theorems \ref{cenrig} or \ref{cenrigsemi}. Then we have
$$|\mathcal{Z}^r(f)/\CZ^r(f)|<C,$$
where $C$ is a constant depending only on $X$.
\end{cor}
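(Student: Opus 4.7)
The plan is to transport the problem from the perturbed system to the algebraic model via the leaf conjugacy from Theorem~\ref{leafconj}, and then apply Theorem~\ref{witte}. Fix the bi-H\"older leaf conjugacy $\phi:X\to X$ from $f_0$ to $f$, which sends the foliation $\mathcal{F}$ by left $D$-cosets (the center foliation of $f_0$) onto $\W^c_f$. For any $g\in \mathcal{Z}^r(f)$, the uniqueness of the $C^0$ center foliation tangent to $E^c_f$ forces $g$ to permute the leaves of $\W^c_f$; hence $\tilde g := \phi^{-1}\circ g\circ \phi$ is a homeomorphism of $X$ preserving $\mathcal{F}$, and $g\in \CZ^r(f)$ if and only if $\tilde g$ fixes each leaf. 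Conjugation by $\phi$ therefore induces an injection
\[
 \mathcal{Z}^r(f)/\CZ^r(f)\;\hookrightarrow\; Q := \mathrm{Homeo}(X,\mathcal{F})\big/\mathrm{Homeo}_0(X,\mathcal{F}),
\]
where $\mathrm{Homeo}_0(X,\mathcal{F})$ denotes the leaf-fixing homeomorphisms.

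Next I would check the hypotheses of Theorem~\ref{witte}. Since $G$ is $\R$-split simple of rank $\ge 2$, it has no normal subgroup isomorphic to $\mathrm{PSL}(2,\R)$; the maximal torus $D$ is abelian, hence unimodular; and a generic $D$-orbit is dense in $X$ by Moore ergodicity. Theorem~\ref{witte} then produces an integer $k=k(G)$ such that every $\tilde g^k$ can be written as $A_g\circ h_g$ with $A_g$ an affine automorphism of $X$ and $h_g$ leaf-fixing. An affine map preserving $\mathcal{F}$ arises from an element of the normalizer $N_G(D)$ composed with a $\Gamma$-preserving automorphism of $G$; after moding out the affine maps fixing each leaf (which contain the left translations by $D$), the possibilities lie in a finite group
\[
 W \;\subset\; N_G(D)/D\,\rtimes\,\mathrm{Out}(G,\Gamma),
\]
where $N_G(D)/D$ is the finite Weyl group and $\mathrm{Out}(G,\Gamma)$ is finite by Mostow rigidity for higher-rank simple $G$. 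So every element of $Q$ has $k$-th power in the finite subgroup $W\subset Q$, and $|W|$ depends only on $G$.

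The main obstacle is passing from the statement ``every element of $Q$ has $k$-th power in $W$'' to an actual bound on $|Q|$, because the $k$-th power map is not a group homomorphism. My plan to handle this is to invoke the strengthening of Witte's theorem noted in the remark after Theorem~\ref{witte}: lifting to the finite cover of $X$ on which $G$ has trivial center, $\tilde g$ itself (not merely $\tilde g^k$) is affine modulo leaf-fixing, so $Q$ embeds into an extension of $W$ by the deck group of the cover, yielding $|Q|\le k\,|W|=: C$, a constant depending only on $G$. Composing with the injection of the first paragraph yields $|\mathcal{Z}^r(f)/\CZ^r(f)|\le C$, as required. The rest of the verification reduces to routine checks that the hypotheses transfer through $\phi$ and that the leaf-space action of $\tilde g$ determines $g$ modulo $\CZ^r(f)$.
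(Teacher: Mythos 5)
Your proposal follows the paper's proof essentially verbatim: conjugate the centralizer by the leaf conjugacy to obtain homeomorphisms preserving the foliation by $D$-cosets, apply Theorem \ref{witte}, and bound the induced action on the leaf space by the finite group generated by the Weyl group and the outer automorphisms. The one place you go beyond the paper --- observing that the $k$-th power map is not a homomorphism and therefore does not by itself bound $|Q|$, and repairing this via the remark following Theorem \ref{witte} --- is a legitimate refinement of a step the paper's two-line proof elides, and your fix is the intended reading of that remark (modulo the minor slip that the trivial-center space is a finite quotient of $X$ rather than a cover of it).
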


\begin{proof}
Take $\phi: G/\Gamma\to G/\Gamma$ to be the leaf conjugacy from $f_0$ to $f$ (given in Theorem \ref{leafconj}) that sends the center foliations of $f_0$ to those of $f$. Since $g\in \mathcal{Z}^r(f)$ preserves the center foliation of $f$, we see that $\phi^{-1} g\phi$ is a homeomorphism that preserves the center foliations $\W^c_{f_0}$, which are cosets of the Cartan subgroup $D$. We note that generic leaves of the cosets of $D$ are dense in $G/\Gamma$.

Therefore, if $g_*$ is trivial in $\mathrm{Out}(\Gamma)$, then $(\phi^{-1} g\phi)_*$ is also trivial in $\mathrm{Out}(\Gamma)$ and by Theorem \ref{witte}, $\phi^{-1} g\phi$ fixes the cosets of $D$ and $g$ fixes the leaves of $\W^c_f$. This shows that the kernel of $\mathcal Z^r(f)\to \mathrm{Out}(\Gamma), g\mapsto g_*$ is in $\CZ^r(f)$. Moreover, by our assumption that $\Gamma$ is a lattice in a semisimple Lie group with simple factors of rank at least two, $\mathrm{Out}(\Gamma)$ is finite. This shows that $|\mathcal{Z}^r(f)/\CZ^r(f)|<C$ for a constant $C$ depending only on $X$. 
\end{proof}

\section{Dichotomy of the centralizer}\label{dichcen}

In this section, we prove Theorem \ref{cenrig} on the dichotomy of the centralizer. 

The scheme of the proof is as follows:

Step 1: We use some estimates on the holonomies to show that if $g$ is not too far from the identity at one point, then $g$ is very close to a translation by some vector at every point.

Step 2: We show that if every $g\in \CZ^r(f)$ is close to the direction of $f$, then the dimension of $\CZ^r(f)$ is bounded above. 

Step 3: In Sections \ref{eph} and \ref{wcf}, we prove that if some $g\in \CZ^r(f)$ is far from the direction of $f$, then $f$ is conjugate to an element of the Weyl chamber flow.

\subsection{Centralizer acts almost by translations on $\W^c$}

Let $f$ be a $C^1$-small perturbation of a generic element $f_0$ of the Weyl chamber flow on $X=G/\Gamma$. 

Let $\phi$ be the leaf conjugacy from $(X,\W^c_{f_0})$ to $(X,\W^c_f)$, as given in Theorem \ref{leafconj}. For any $g\in \CZ^r(f)$, we denote by $\hat{g}:=\phi^{-1} g\phi$ the conjugate of $g$ by the leaf conjugacy. Since $g$ fixes the center leaves of $f$, we see that $\hat{g}$ fixes the center leaves of $f_0$, which are the orbits of $D$. 

We first note that, since $\mathrm{Out}(\Gamma)$ is finite, we may restrict to the elements $g\in \CZ^r(f)$, with lifting $\tilde{g}(y\gamma)=\tilde g(y)\gamma$ for any $y\in G, \gamma\in \Gamma$.

From now on, we consider elements of the finite index subgroup $\CZ^r_0(f)=\{g\in \CZ^r(f): g_*=id\in \mathrm{Out}(\Gamma)\}$ of $\CZ^r(f)$, which is also a finite index subgroup of $\mathcal Z^r(f)$ by our discussion in Section \ref{centgp}. 

For $\hat{g}$ fixing the center leaves of $f_0$, we may lift it to a center-fixing diffeomorphism $\tilde{g}: G\to G$, and the element $\tilde{g}(y)y^{-1}\in D$ does not depend on the choice of $y\in G$ for $x=y\Gamma$. We also denote by $\tilde \W$ in $G$ the lifting of a foliation $\W$ in $X$.

We now show that there is a uniform estimate on the behavior of $\hat{g}$ at every $x\in X$.

The precise statement is as follows: 

\begin{prop}\label{z2:dtof0}
For any $\epsilon>0$, there exists $\delta>0$, such that for any $f\in \mathrm{Diff}^2(X)$ with $d_{C^1}(f,f_0)<\delta$, we have for any $x\in G$:
$$
d_{\tilde \W^c_{f_0}}(\tilde {g}(y),g_0(y))\le \epsilon e^{C_Gd_{\tilde \W^c_{f_0}}(x, \tilde {g}(x))} d_{\tilde \W^c_{f_0}}(x, \tilde {g}(x))
$$
for any $y\in G$ and any $g\in \CZ_0^r(f)$. Here, $\hat{g}$ is the conjugate of $g$ by the leaf conjugacy, and $g_0$ is an element of the Weyl chamber flow given by $g_0(y)=(\tilde{g}(x)x^{-1})\cdot y$, which depends on the choice of $x$; and $C_G$ is a constant that only depends on $G$.
\end{prop}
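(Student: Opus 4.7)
The plan is to combine two pieces of information: $g$ commutes with the $f$-$su$-holonomies on center leaves, and when $f$ is $C^1$-close to $f_0$, these holonomies (pulled back to the $f_0$-picture via the leaf conjugacy $\phi$) are $C^1$-close to the algebraic $f_0$-$su$-holonomies, which have a rigid translation structure on cosets of $D$.

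A direct computation in $G$ shows that for any $f_0$-$su$-path $\gamma_0$ from $p$ to $q = g_{k-1}\cdots g_0\cdot p$ (with each $g_i\in G^s\cup G^u$), the induced holonomy $h^{f_0}_{\gamma_0}:Dp\to Dq$ satisfies $h^{f_0}_{\gamma_0}(ap) = aq$ for every $a\in D$; this uses only that $D$ normalizes $G^s$ and $G^u$. Setting $\alpha := \hat g(x)x^{-1}\in D$ and picking any $f_0$-$su$-path $\gamma_0$ from $x$ to $y$, it follows that
\[
g_0(y) \;=\; \alpha y \;=\; h^{f_0}_{\gamma_0}(\hat g(x)).
\]
By accessibility of $f_0$ and compactness of $X$, such a $\gamma_0$ can be chosen of uniformly bounded length. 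Proposition~\ref{faccess} then supplies an $f$-$su$-path $\gamma$ with endpoints $\phi^{-1}(x),\phi^{-1}(y)$ that is $C^0$-close to $\gamma_0$ once $\delta$ is small enough, and since $g$ commutes with the global $f$-$su$-holonomy $h^f_\gamma$ on $\W^c_f(\phi^{-1}(x))$, we get
\[
\hat g(y) \;=\; \tilde h_\gamma(\hat g(x)), \qquad \tilde h_\gamma \;:=\; \phi\circ h^f_\gamma\circ \phi^{-1}:\; \W^c_{f_0}(x)\to \W^c_{f_0}(y),
\]
with $\tilde h_\gamma(x)=y$.

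Parametrize both $\W^c_{f_0}(x)=Dx$ and $\W^c_{f_0}(y)=Dy$ by the $D$-action. In these coordinates $h^{f_0}_{\gamma_0}$ is the identity on $D$, and $\tilde h_\gamma$ becomes a diffeomorphism $\Psi:D\to D$ with $\Psi(e)=e$; moreover $\hat g(x)$ corresponds to $\alpha\in D$ at distance $|\alpha|:=d_D(e,\alpha)=d_{\W^c_{f_0}}(x,\hat g(x))$ from the identity. By Theorem~\ref{c1hol} and the uniform $C^r$-regularity of $\phi$ along center leaves (Theorem~\ref{leafconj}), once $\delta$ is small depending on $\epsilon$ and $R$, $\Psi$ is $C^1$-close to the identity on the ball $B_R(e)\subset D$. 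The mean value theorem then yields
\[
d_{\W^c_{f_0}}(\hat g(y),g_0(y)) \;=\; |\Psi(\alpha)-\alpha| \;\le\; |\alpha|\sup_{t\in[0,1]}\|d\Psi(t\alpha)-I\|,
\]
and the claim follows from the estimate $\sup_{|\beta|\le |\alpha|}\|d\Psi(\beta)-I\|\le \epsilon\, e^{2|\alpha|}$.

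The main obstacle is this last quantitative estimate on $\|d\Psi-I\|$ across the ball of radius $|\alpha|$, where the exponential factor $e^{2|\alpha|}$ enters. Comparing the $f$- and $f_0$-holonomies at a base point $\beta x$ with $\beta\in D$ involves the ``parallel'' $f_0$-$su$-path whose legs are $\beta g_i\beta^{-1}=\exp(\mathrm{Ad}_\beta u_i)$, which scale each leg in the root space $\mathfrak{g}_\lambda$ by $e^{\lambda(\log\beta)}$. The resulting amplification of the $C^1$-perturbation error through the concatenated legs produces the exponential dependence on $|\alpha|$; tracking this scaling carefully and absorbing the root-system constants into the exponent $2$ is the technical crux. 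The remaining steps---the commuting-holonomy identity, the algebraic identity for $g_0$, and the reduction to a self-map $\Psi:D\to D$ fixing the origin---are essentially routine bookkeeping once the setup is in place.
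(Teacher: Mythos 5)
Your proposal follows essentially the same route as the paper: both exploit that $g$ commutes with the $f$-$su$-holonomies, compare the pulled-back $f$-holonomy with the algebraic $f_0$-holonomy (which acts as a translation/identity in $D$-coordinates), and trace the factor $e^{2d_{\W^c_{f_0}}(x,\hat g(x))}$ to the $\mathrm{Ad}$-scaling of the legs of the parallel $su$-path as one moves along the center leaf. The only organizational difference is that you package the estimate as a single mean-value inequality for $\Psi:D\to D$, whereas the paper splits into a bounded case (an integral/derivative estimate using Theorem~\ref{c1hol}) and an unbounded case (a $C^0$ comparison of the endpoints of three nearby $su$-paths); the mechanism and the identified technical crux are identical.
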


The idea of the proof is to exploit the fact that $g\in \CZ^r(f)$ commutes with global $su$-holonomies from $\tilde \W^c_f(x)$ to $\tilde \W^c_f(y)$. We postpone the proof to the end of this section.

For any $g\in \CZ^r_0(f)$, we consider the following notion for $g$ to be close to a translation. \emph{This is a cone-like condition where the $C^0$ closeness depends on the translation length.}

\begin{defi}
For any $g\in \CZ^r_0(f)$, we say $g$ is \emph{projectively $\epsilon$-close to a translation} if there exists an element of the Weyl chamber flow $g_0$ such that for any $y\in G$, 
\begin{equation}\label{ntr}
d_{\tilde \W^c_{f_0}}(\tilde {g}(y),g_0(y))\le \epsilon d_{\tilde \W^c_{f_0}}(g_0(y),y).
\end{equation} 
\end{defi}

Since $f$ is a $C^1$ perturbation of $f_0$, it is easy to see that $f$ is projectively $\epsilon$-close to $f_0$. We shall show that many of $g\in \CZ(f)$ are also projectively $\epsilon$-close to translations.

First we make the following general observation. The proof is an elementary exercise.

\begin{lem}\label{transadd}
If $g$ and $g^\prime$ are projectively $\epsilon$-close to translations $g_0$ and $g_0^\prime$, respectively, and if for some $0<c\le 1$,
$$
d(g_0\circ g_0^\prime(y),y) \ge c \big(d(g_0(y),y)+d(g_0^\prime(y),y)\big),
$$
then $g\circ g^\prime$ is projectively $\frac{\epsilon}{c}$-close to a translation.

In particular, if $g$ is projectively $\epsilon$-close to a translation $g_0$, then $g^{k}$ is projectively $\epsilon$-close to the translation $g_0^k$ for any $k\in \N$.
\end{lem}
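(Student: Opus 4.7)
The plan is to estimate $d_{\W^c_{f_0}}(\widehat{g\circ g'}(y), g_0\circ g'_0(y))$ by splitting it via the triangle inequality and exploiting two elementary facts: the composition of conjugates is the conjugate of the composition ($\widehat{g\circ g'}=\hat g\circ \hat g'$), and every element of the Weyl chamber flow acts as an isometry on each leaf of $\W^c_{f_0}$ (since $D$ is abelian and the metric on $G/\Gamma$ restricts to a bi-invariant metric on cosets of $D$). In particular, the translation length $d_{\W^c_{f_0}}(g_0(z),z)$ is independent of the base point $z$.

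First I would write
\[
d_{\W^c_{f_0}}(\hat g\circ\hat g'(y),\,g_0\circ g'_0(y)) \le d_{\W^c_{f_0}}(\hat g(\hat g'(y)),\,g_0(\hat g'(y))) + d_{\W^c_{f_0}}(g_0(\hat g'(y)),\,g_0(g'_0(y))).
\]
For the first term I would apply the hypothesis that $g$ is $\epsilon$-close to $g_0$ at the point $\hat g'(y)$, then use base-point invariance of the translation length to rewrite $d_{\W^c_{f_0}}(g_0(\hat g'(y)),\hat g'(y))=d_{\W^c_{f_0}}(g_0(y),y)$. For the second term I would use that $g_0$ is an isometry along the leaf to collapse it to $d_{\W^c_{f_0}}(\hat g'(y),g'_0(y))$, then apply the $\epsilon$-closeness of $g'$ to bound it by $\epsilon\, d_{\W^c_{f_0}}(g'_0(y),y)$. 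Adding the two bounds gives
\[
d_{\W^c_{f_0}}(\widehat{g\circ g'}(y),g_0\circ g'_0(y)) \le \epsilon\bigl(d_{\W^c_{f_0}}(g_0(y),y)+d_{\W^c_{f_0}}(g'_0(y),y)\bigr),
\]
and then the hypothesis $d(g_0\circ g'_0(y),y)\ge c(d(g_0(y),y)+d(g'_0(y),y))$ converts this into the desired bound $\frac{\epsilon}{c}\, d_{\W^c_{f_0}}(g_0\circ g'_0(y),y)$.

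For the iterated statement, I would argue by induction on $k$. Assume $g^{k-1}$ is $\epsilon$-close to $g_0^{k-1}$ and apply the first part with $g'=g^{k-1}$, $g'_0=g_0^{k-1}$. Because $g_0$ and $g_0^{k-1}$ are translations along the same direction of the abelian group $D$, the translation lengths add exactly: $d(g_0^{k}(y),y)=d(g_0(y),y)+d(g_0^{k-1}(y),y)$, so the hypothesis holds with $c=1$, yielding $\epsilon/c=\epsilon$ and closing the induction.

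I do not expect any substantive obstacle: the statement is genuinely an elementary exercise. The only mild subtlety is keeping clear that the relevant ``translation length'' $d_{\W^c_{f_0}}(g_0(z),z)$ is constant in $z$; this is where the abelianness of $D$ and the $f_0$-invariance of the leafwise metric are used, and it is what makes the bound clean enough to iterate without losing constants.
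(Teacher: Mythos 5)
Your argument is correct; the paper itself omits the proof of this lemma (dismissing it as an elementary exercise), and your triangle-inequality decomposition --- together with the observations that $\widehat{g\circ g'}=\hat g\circ\hat g'$, that the leafwise translation length $d_{\W^c_{f_0}}(g_0(z),z)$ is base-point independent, and that $g_0$ acts isometrically along the leaf --- is exactly the intended elementary argument. The induction for the second part, using exact additivity of translation lengths for powers of a single translation so that the hypothesis holds with $c=1$, is also right.
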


For $g\in \CZ_0^r(f)$, as a direct corollary of Proposition \ref{z2:dtof0}, we may show that $g$ is either uniformly very far from the identity or projectively $\epsilon$-close to a translation.

\begin{cor}\label{trorf}
For any $R>0$, $0<\epsilon<1$, there exists $\delta>0$ such that for any $C^1$-perturbation $f$ of $f_0$ as in Proposition \ref{z2:dtof0} with $d_{C^1}(f,f_0)<\delta$, and for any $g\in \CZ^r_0(f)$, if $\inf_{x\in G} d_{\tilde \W^c_{f_0}}(\tilde g(x),x)\le R$, then $g$ is projectively $\epsilon$-close to a translation $g_0$.
\end{cor}

The proof is a direct application of Proposition \ref{z2:dtof0}.

\color{black}

\subsection{Dichotomy of the centralizer}

Combining the fact that many $g\in \CZ^r_0(f)$ ``act like translations" with the fact that $\CZ^r_0(f)$ is a Lie group, we can obtain our main results.

We discuss whether $g$ and $f$ are approximately in the same direction to derive our theorems. Roughly speaking, if they are, then the dimension of $\CZ^r_0(f)$ is bounded above; if not, then there exists a partially hyperbolic $\Z^2$-action on $X$ and $f$ must be conjugate to a diagonal action.

For any $\epsilon>0$, we say that a vector $v\in \mathfrak{h}_i$ in the Cartan subalgebra of a simple component $\mathfrak g_i$ is \emph{outside the $\epsilon$-cone of $u\in \mathfrak{h}_i$} if
\[
d_{\mathfrak{h}_i}(v, \R u) > \epsilon\, d_{\mathfrak{h}_i}(v,0).
\]
A vector $v$ is outside the $\epsilon$-cone of $u$ if left multiplication by $\exp(v)$ is not projectively $\epsilon$-close to $\exp(tu)$ for any $t$.

We say that a translation $g_0$ is \emph{outside the $\epsilon$-cone of $f_0$ in a simple component} $\mathfrak{g}_i$ if the projection of the translation vector
\[
\rho(g_0):=\log(g_0(x)x^{-1})
\]
onto $\mathfrak{h}_i$ is outside the $\epsilon$-cone of the projection of $\rho(f_0):=\log(f_0(x)x^{-1})$.

\begin{defi} 
We say that $g\in \CZ^r_0(f)$ \emph{is in a different direction from $f$} if $g$ is projectively $0.01$-close to a translation $g_0$ whose projection to every simple component is outside the $0.1$-cone of $f_0$, and $\|\rho(f_0)\|\le \|\rho(g_0)\|\le 2\|\rho(f_0)\|$. 
\end{defi}
\color{black}
The existence of elements in a different direction from $f$ guarantees that $f$ is smoothly conjugate to an affine action.

\begin{prop}\label{gph}
Let $f$ be as in Theorems \ref{cenrig} or \ref{cenrigsemi}. If there exists $g\in \CZ^r_0(f)$ in a different direction from $f$, then $f$ is $C^r$-conjugate to an element of the Weyl chamber flow. In particular, $\CZ^r(f)$ is virtually $\R^k$, where $k = \mathrm{rank}(G)$.
\end{prop}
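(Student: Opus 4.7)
The plan is to use the extra commuting element $g$ to bootstrap the single normally hyperbolic diffeomorphism $f$ into a genuinely higher-rank abelian action on $X$ that is a $C^0$-small perturbation of an algebraic Cartan action, and then invoke the local smooth rigidity of such actions due to Damjanovi\'c--Katok \cite{DK} and Vinhage--Wang \cite{VinWang}.

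First I would produce, from $g$, a pair of commuting diffeomorphisms whose asymptotic translation vectors in the Cartan subalgebra $\mathfrak{h}$ are linearly independent and both \emph{generic} (i.e.\ avoid every Weyl chamber wall). If $g$ is $\epsilon_0^2$-close to a translation $g_0$ outside the $10\epsilon_0$-cone of $f_0$, then by the choice of $\epsilon_0$ the vector $\rho(g_0)$ avoids every Weyl chamber wall, so $g_0$ is itself generic. In the volume-preserving rank-2 case, Corollary \ref{rank2} gives $\rho(g)\notin \rho(f)\R$, and after replacing $g$ by a suitable combination $g^p f^q$ (invoking Lemma \ref{transadd} to remain $O(\epsilon_0)$-close to a translation) I may further arrange $\rho(g^p f^q)$ to be generic. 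Either way, I obtain commuting $(f,g)\in \CZ^r(f)\times \CZ^r(f)$ that are, via the leaf conjugacy $\phi$, uniformly close on center leaves to generic Weyl chamber elements $f_0, g_0$ whose translation vectors span a $2$-plane in $\mathfrak{h}$ not contained in any Weyl chamber wall.

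Next I would promote the $\Z^2$-action $\langle f,g\rangle$ to a perturbation of the algebraic $\Z^2$-Cartan action generated by $(f_0,g_0)$ on $G/\Gamma$. By Proposition \ref{z2:dtof0} together with Lemma \ref{transadd}, every element of $\langle f,g\rangle$ is $O(\epsilon_0)$-close on center leaves to the corresponding algebraic translation, so via $\phi$ the entire $\Z^2$-action is $C^0$-close to the algebraic one. Because the span of $\rho(f_0)$ and $\rho(g_0)$ is a generic $2$-plane, this algebraic action is a genuine higher-rank partially hyperbolic $\Z^2$-Cartan action. I would then invoke the local smooth rigidity theorems of \cite{DK, VinWang}, which assert that any sufficiently close $\Z^2$-action of this type is $C^\infty$-conjugate to the algebraic model, to produce $H\in \mathrm{Diff}^\infty(X)$ simultaneously conjugating $f$ and $g$ to algebraic elements. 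In particular, $f$ is smoothly conjugate to an element of the Weyl chamber flow. The conjugated maximal torus $H^{-1}DH\simeq \R^k$ then lies inside $\CZ^r(f)$, and combining with the bound $\dim \CZ^r(f)\le k$ from Theorem \ref{czlieg} forces $\CZ^r(f)\simeq \R^k$.

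The main obstacle is the third step. The closeness provided by Proposition \ref{z2:dtof0} is only along center leaves (through the bi-H\"older leaf conjugacy), whereas the rigidity machinery of \cite{DK, VinWang} requires honest $C^1$-closeness of the abelian action on all of $X$. Bridging this gap will occupy Sections \ref{eph} and \ref{wcf}: one must combine the normal form theory of Section \ref{prel:ph}, the $C^r$-smoothness of $su$-holonomies from Theorem \ref{c1hol}, and Journ\'e's lemma to upgrade the control of $g$ transverse to $\W^c_f$, and then verify the precise accessibility, genuine-higher-rank, and closeness hypotheses required to run the cocycle and KAM-type arguments of \cite{DK, VinWang}.
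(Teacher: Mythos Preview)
Your overall plan---build a $\Z^2$-action $\langle f,g\rangle$, compare it to an algebraic $\Z^2$-Cartan action, and invoke the rigidity machinery of \cite{DK,VinWang}---matches the paper's strategy. However, your proposed mechanism for bridging the main obstacle diverges from the paper and does not appear to work.

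You suggest upgrading to genuine $C^1$-closeness of the $\Z^2$-action by combining normal forms, $su$-holonomy regularity, and Journ\'e. But these are regularity-upgrading tools, not distance-bounding tools: they tell you that $g$ is smooth, not that $g$ is $C^1$-near any particular algebraic map. There is simply no reason the commuting element $g$ should be $C^1$-close to $g_0$, and the paper explicitly notes that $\alpha$ is \emph{not} a priori a $C^1$-small perturbation of $\alpha_0$, so Theorem~3.1 of \cite{VinWang} cannot be invoked directly.

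The paper's route around this is different. It introduces the weaker notion of \emph{topological perturbation} (Definition~\ref{topper}): one only requires $\alpha$ to preserve continuous foliations $\W^i_\#$ that are $C^0$-close to the algebraic Lyapunov foliations and to contract or expand them with the same signature as $\alpha_0$. The substantive work in Section~\ref{eph} is constructing these foliations as $\W^i_\#=\phi^{-1}(\W^{i,c}_{f_0})\cap \W^{s/u}_f$ and proving the contraction/expansion estimate (Proposition~\ref{gcont}) via the bi-H\"older leaf conjugacy and the Lie-theoretic geometry of the root spaces. Section~\ref{wcf} then re-runs the periodic-cycle-functional argument of \cite{VinWang} directly in this topological setting to trivialize the cocycle $\hat\beta(a,x)=(\widetilde{\hat\alpha(a)})(x)x^{-1}\exp(\rho(\alpha(a)))^{-1}$, producing a continuous conjugacy that is finally upgraded to $C^\infty$ via normal forms. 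No KAM is used.

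One minor correction to your Step~1: the choice of $\epsilon_0$ places every Weyl chamber wall \emph{outside} the $10\epsilon_0$-cone of $f_0$, not inside it, so $g_0$ lying outside that cone does not make $g_0$ generic. The paper never needs $g_0$ generic; it only needs the projections of $\rho(f_0)$ and $\rho(g_0)$ to each simple factor to be linearly independent, so that $\alpha_0$ is genuinely higher rank.
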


The proof of Proposition \ref{gph} is postponed to Sections \ref{eph} and \ref{wcf}. In Section \ref{eph}, we show that if some $g\in \CZ^r_0(f)$ is in a different direction, then the action
\[
\Z^2 \to \mathrm{Diff}^r(X): (k,l) \mapsto f^k g^l
\]
is topologically close to an affine action, with the same type of contraction and expansion along appropriate foliations. In Section \ref{wcf}, we show that such an action must be smoothly conjugate to a restriction of the Weyl chamber flow.

Combining the results above, $f$ is conjugate to an element of the Weyl chamber flow unless every $g\in \CZ^r_0(f)$ is in the same direction as $f$ in some simple component. We now show that this imposes a dimension bound on $\CZ^r_0(f)$.

\begin{prop}\label{conerig}
Let $f:X\to X=G/\Gamma$ be a small perturbation of an element of the Weyl chamber flow as in Theorem \ref{cenrigsemi}. If there does not exist $g\in \CZ^r_0(f)$ in a different direction from $f$, then $\CZ^r_0(f)$ is a Lie group of dimension at most $k-\min_i k_i+1$, where $k=\mathrm{rank}\; G$ and $k_i$ are the ranks of the simple components of $G$. 
\end{prop}

\begin{proof}

Fix a generic center leaf $\W^c_{f_0}(x_0)$ and identify it with the Cartan subalgebra $\mathfrak{h}$ via the logarithmic map at $x_0$.

 Consider the map\[
\Psi: \CZ^r_0(f) \to \mathfrak{h}, \quad g \mapsto \log(\hat{g}(x_0)x_0^{-1}).
\]
By Proposition \ref{CZcpt}, $\CZ^r_0(f)$ is a Lie group acting freely and properly on center leaves by $C^r$ diffeomorphisms. Therefore, by Theorem 1 of \cite{BM}, the action $\CZ^r_0(f)\times X\to X$ is jointly $C^r$. Using the fact that the leaf conjugacy is smooth restricted to the center, this shows that the image $\mathrm{Im}(\Psi)$ is a smooth embedded submanifold of $\mathfrak{h}\simeq \R^k$.

Let
\[
K_i = \{y\in \mathfrak{h}_i: d(y, \pi_i\rho(f_0)\R) < 0.1 |y|\}.
\]

By our assumption, any $g\in \CZ^r_0(f)$ cannot be $0.01$-close to a translation $g_0$ not in the same direction as $f_0$.

Applying Proposition \ref{z2:dtof0} and Corollary \ref{trorf}, we have that any $g\in \CZ^r_0(f)$ with $\|\rho(f_0)\|\le \|\Psi(g)\|\le 2\|\rho(f_0)\|$ is projectively $0.1$-close to a translation $g_0(y)=\Psi(g)\cdot y$. By our assumption, the projection of $g_0$ to some simple component $\mathfrak{g}_i$ lies inside the $0.1$-cone of the corresponding projection of $f_0$. Moreover, if $\|\Psi(g)\|\le 1$ is projectively $0.01$-close to a translation $g_0$ outside the $0.1$-cone of $f_0$ in each simple component, then there exists $k\in \N$, such that $\|\rho(g_0^k)\|=k\|\rho(g_0)\|\in [\|\rho(f_0)\|,2\|\rho(f_0)\|]$. Therefore the power $g^k$ is projectively $0.01$-close to $g_0^k$, which is outside the $0.1$-cone in each simple component with $\|\rho(f_0)\|\le \|\Psi(g^k)\|\le 2\|\rho(f_0)\|$. This shows that for any $g\in \CZ^r_0(f)$ with $ \|\Psi(g)\|\le 2\|\rho(f_0)\|$, we must have $\Psi(g)\in \cup \pi_i^{-1}(K_i)$.

Therefore, we have by Corollary \ref{trorf},
\[
\mathrm{Im}(\Psi) \cap B(0,2\|\rho(f_0)\|) \subset \cup_i \pi_i^{-1}(K_i)=\cup_i (K_i\times\prod_{j\neq i}\mathfrak h_j)
.\]
 Let $C_i=\{0\}\cup(\mathfrak h_i-K_i)\subset \mathfrak h_i$ be the complementary cone. Let $P$ be the subspace tangent to $\mathrm{Im}(\Psi)$ in $\mathfrak h$ at $0$. Then we have $P\cap \prod_i(C_i-\{0\})=\varnothing$. Therefore, $P\cap \prod_i(C_i)\subset \cup_{i}\{0\}\times \prod_{j\neq i} C_j$. Let $P_i=(\pi_i\rho(f_0))^\perp\subset \mathfrak h_i$ be a maximal subspace inside $C_i$, then we have $\dim P_i=k_i-1$ and $\dim \prod_i(P_i)=\sum_i(\dim \mathfrak h_i-1)$. By $P\cap \prod_i(P_i)\subset \cup_{i}\{0\}\times \prod_{j\neq i} P_j$, we have the dimension count: 
$$\dim P+\sum_i(\dim \mathfrak h_i-1)-\dim \mathfrak h\le\dim (P\cap \prod_i(P_i))\le \max_j \sum_{i\neq j}(\dim \mathfrak h_i-1).$$ This shows that $\dim P\le \dim \mathfrak h-\min_i(\dim \mathfrak h_i-1)=k-\min k_i+1$

Since $\Psi$ is an embedding, this shows that $\dim \CZ^r_0(f) \le k - \min k_i + 1$. In particular, in the case when $G$ is simple, $\dim \CZ^r_0(f)=0$ or $1$.
\end{proof}

\color{black}
Theorem \ref{cenrig} follows directly from Propositions \ref{gph} and \ref{conerig}.

\subsection{Proof of Proposition \ref{z2:dtof0}}

We now return to the proof of Proposition \ref{z2:dtof0}.

By Proposition \ref{comhol}, for any center-fixing centralizer $g\in \CZ^r_0(f)$, we have
$$
g(y) = h^f_{x\to y}(g(x))
$$
for any $x,y\in X$ and any $f$-$su$-holonomy $h^f_{x\to y}: \W^c(x)\to \W^c(y)$ along a path from $x$ to $y$.

For any two points $x,y\in X$, take an $f$-$su$-holonomy from $\phi(x)$ to $\phi(y)$ and denote
$$
\hat{h}^f_{x\to y} := \phi^{-1} \circ h^f_{\phi(x)\to \phi(y)} \circ \phi
$$
to be the $su$-holonomy of $f$ pulled back to the center leaves of $f_0$ via the leaf conjugacy. We denote by $\tilde h^f_{x\to y}$ its lifting to $G$. By the above condition, $g(y)=h^f_{x\to y}(g(x))$, we then have
$$
\tilde{g}(y) = \tilde{h}^f_{x\to y}(\tilde{g}(x))
$$
for any $x,y\in X$.

Fix $x\in X$ and define
$$
g_0(y) = h^{f_0}_{x\to y}(\hat{g}(x)),
$$
a diffeomorphism $\W^c_{f_0}(x)\to \W^c_{f_0}(y)$ given by the $su$-holonomies of $f_0$. This map does not depend on the path and is an element of the Weyl chamber flow on $X=G/\Gamma$, acting by translation on the leaves of $\W^c_{f_0}$.

Intuitively, the difference between $\hat{g}$ and $g_0$ is the difference between the $su$-holonomies $\hat{h}^f$ and $h^{f_0}$, which should be small if $f$ is a small perturbation of $f_0$. We aim to show that $\hat{g}$ is $C^0$-close to the translation $g_0$ on the leaf $\W^c_{f_0}(x)$ unless $\hat{g}$ is far from the identity.

\begin{proof}[Proof of Proposition \ref{z2:dtof0}]
We consider two cases:

\begin{itemize}
\item{Case 1}: $|\tilde{g}(x)-x|\le 1$.

We consider the map $\tilde{h}^f_{x\to y}$, and identify $\tilde \W^c_{f_0}(x)$ and $\tilde \W^c_{f_0}(y)$ with $\mathfrak{h}\simeq \R^k$, for any $z\in \tilde \W^c_{f_0}(x)$, $$ \tilde{h}^f_{x\to y}(z)=\tilde{h}^f_{x\to y}(x)+\int_{\gamma} D_{\dot{\gamma}}(\tilde{h}^f_{x\to y})dt $$ $$=y+\int_{\gamma}D_{\dot{\gamma}}(\tilde{h}^f_{x\to y})dt $$ $$=g_0(y)+ \int(D_{\dot{\gamma}}(\tilde{h}^f_{x\to y})\mid_{x^\prime}-\mathrm{Id})dt$$ for any curve $\gamma(t)$ connecting $x$ and $z$.

Now plug in $z=\tilde{g}(x)$. We may always choose an $f$-$su$-path from $\phi(x)\to \phi(y)$ of $\le k$ legs and length $\le R$. Since $|x-z|<1$, the $f$-$su$-path given by the holonomy $h^f_{\phi(x)\to \phi(y)}$ starting at the point $x^\prime\in \gamma$ must also be of $\le k$ legs and of length $\le e^2 R$. Thus by Theorem \ref{c1hol}, there exists $\delta>0$, such that the operator norm $\|D(\tilde{h}^f_{x\to y})\mid_{x^\prime}-D(h^{f_0}_{x\to y})\mid_{x^\prime}\|=\|D(\tilde{h}^f_{x\to y})\mid_{x^\prime}-\mathrm{Id}\|<\epsilon$ for any $f\in \mathrm{Diff}^2(X)$ that is a $C^1$-$\delta$-small perturbation of $f_0$.

Therefore, we have $$|\tilde{h}^f_{x\to y}(z)-g_0(y)|=|\tilde{g}(y)-g_0(y)|$$$$\le \|D(\tilde{h}^f_{x\to y})\mid_{x^\prime}-\mathrm{Id}\|\cdot |x-z|\le \epsilon |x-\tilde{g}(x)|$$. 

\item{Case 2}: $|x-\tilde {g}(x)|\ge 1$.

In this case, for any $\epsilon_1>0$, we may take a small enough $\delta$ and consider the three ``$su$-holonomies", given as follows:

(1)$\tilde{h}^f_{x\to y}$ given by an $f$-$su$-path $\gamma_0$ between $\phi(x)$ and $\phi(y)$ such that it has $\le k$ legs and length $\le R$. This is displayed in Figure \ref{holonomy} by the blue lines.

(2) An $f_0$-holonomy $h^{f_0}_{x\to y^\prime}$ that is given by the unique $f_0$-$su$-path $\gamma_1$ which is $\epsilon_1$-near the path $\gamma_0$ such that each endpoint of $\gamma_0$ and $\gamma_1$ are in the same $\tilde \W^c_{f_0}$-leaf respectively. $\gamma_1$ is displayed in Figure \ref{holonomy} by the red lines.

(3) Another $f_0$-holonomy $h^{f_0}_{x\to y}$ that is given by an $f_0$-$su$-path $\gamma_2$ given as follows. Since $y^\prime$ is $\epsilon$-close to $y$, by Lemma \ref{ctshol} (or simply by computation in the Lie algebra), there exists an $f_0$-$su$-path $\gamma_2$ that is $\epsilon_1$-near $\gamma_1$ and connects $x$ and $y$. We pick any such path to be $\gamma_2$. In the heuristic graph this is represented by the pink lines.

\begin{figure}
 \centering
 \includegraphics[width=0.8\textwidth]{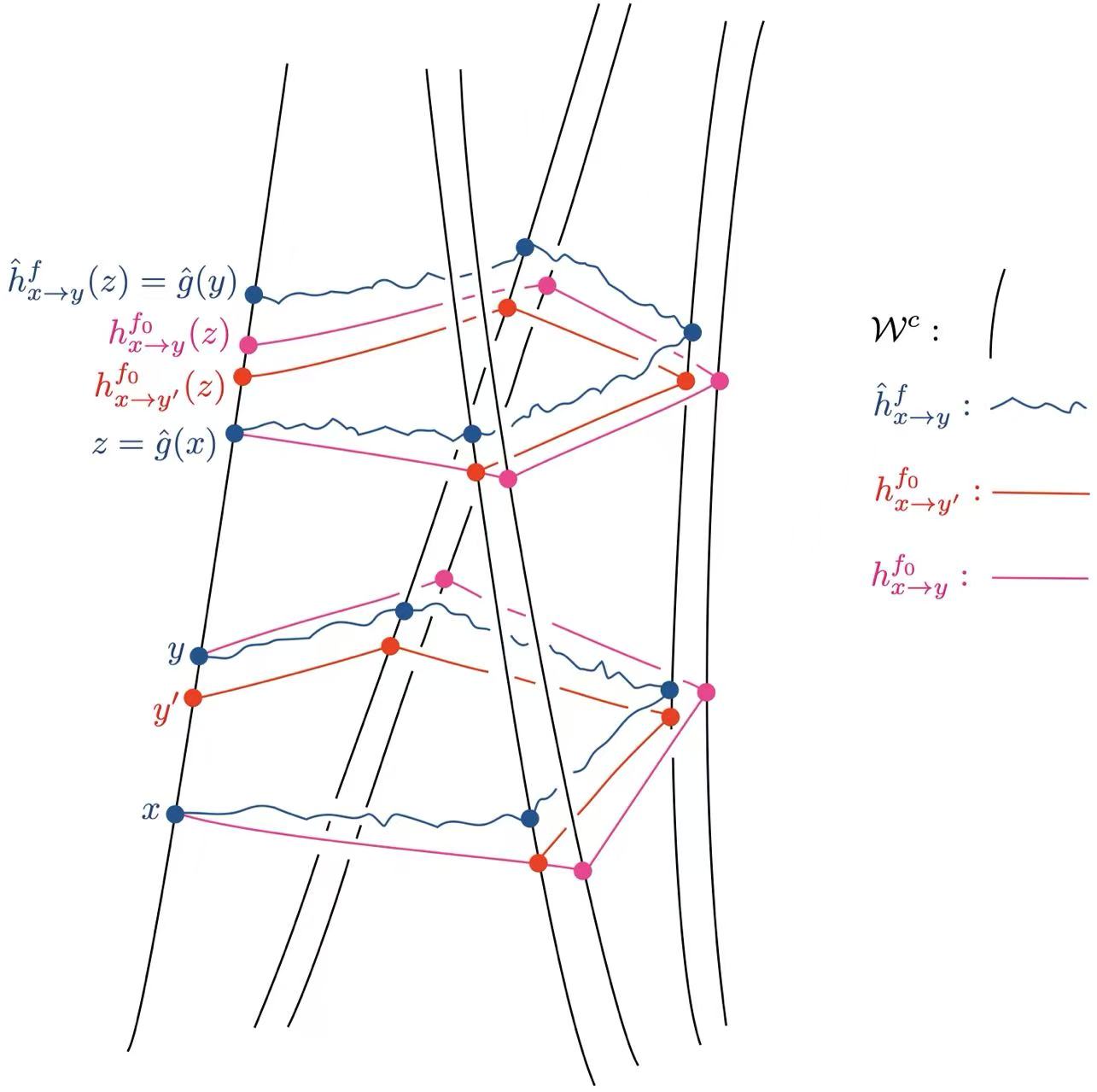}
 \caption{$su$-holonomies of $\hat{h}^f$ and $h^{f_0}$}
 \label{holonomy}
\end{figure}

Now we claim that for any $\epsilon_1>0$, there exists some $\delta>0$, such that for any $z\in\tilde\W^c_{f_0}$, we always have $$d_{\tilde\W^c_{f_0}}(\tilde{h}^f_{x\to y}(z),h^{f_0}_{x\to y^\prime}(z))\le \epsilon_1 e^{C_Gd_{\tilde \W^c_{f_0}}(x,z)}\cdot l(\gamma_1),$$ where $l(\gamma_1)$ denotes the length of $\gamma_1$. This is because the $su$-path $\gamma_1^z$ of $f_0$, given by the $su$-holonomy $h^{f_0}_{x\to y^\prime}$ and starting at $z$, has length at most $e^{C_Gd_{\tilde \W^c_{f_0}}(x,z)}\cdot l(\gamma_1)$, as given by the geometry of the leaves in the Lie group $G$, where $C_G=\max_{v\in \mathfrak h,|v|=1 } \max_{\chi_i\in \Phi}|\chi_i(v)|$ is the maximal root of a normalized vector. We can pick $\delta>0$ so that at any point, the angular distance between the stable or unstable subspace of $f_0$ and the conjugate of the stable or unstable subspace of $f$ differ by at most $\epsilon_1$ respectively.

Therefore, we have $$d_{\tilde\W^c_{f_0}}(\tilde{h}^f_{x\to y}(z),h^{f_0}_{x\to y}(z))$$$$\le d_{\tilde \W^c_{f_0}}(\tilde{h}^f_{x\to y}(z),h^{f_0}_{x\to y^\prime}(z))+d_{\tilde \W^c_{f_0}}(h^{f_0}_{x\to y}(z),h^{f_0}_{x\to y^\prime}(z))$$$$\le \epsilon_1 e^{C_Gd_{\tilde \W^c_{f_0}}(x,z)}\cdot l(\gamma_1) +d_{\W^c_{f_0}}(y,y^\prime) $$$$\le \epsilon_1 e^{C_Gd_{\tilde \W^c_{f_0}}(x,z)}\cdot l(\gamma_1) +\epsilon_1.$$

Since $l(\gamma_1)\le R$, take $\epsilon_1\le \frac{\epsilon}{2R+2}$ and take $z=\tilde{g}(x)$, we get the desired result.

\end{itemize}
\end{proof}

\section{Existence of partially hyperbolic elements in the rigid case}\label{eph}

In this section, we show that if there exists some $g\in\CZ^r_0(f)$ in a different direction from $f$, then the centralizer $\CZ^r_0(f)$ contains ``many'' topologically normally hyperbolic elements. This produces a ``genuinely higher rank'' action on $G/\Gamma$, analogous to a higher-rank restriction of the Weyl chamber flow. Such higher-rank actions are expected to be rigid, based on the results of Damjanovi\'c and Katok \cite{DK}, and Vinhage and Wang \cite{VinWang}.

\begin{defi}
 A restriction $\alpha_0$ of the Weyl chamber flow on $X=G/\Gamma$ is called \emph{genuinely higher rank} if its lifting $\tilde{\alpha}_0$ to $G$ has no homogeneous rank-$1$ factor.
\end{defi}

We say that an action $\alpha$ is a \emph{topological perturbation} of an affine action $\alpha_0$ if it preserves a family of ``Lyapunov foliations'' and acts on them dynamically like $\alpha_0$. 

\begin{defi}\label{topper}
 Let $X=G/\Gamma$ be a quotient of a genuinely higher-rank semisimple Lie group. We say that a continuous action $\alpha:\Z^2 \to \mathrm{Homeo}(X)$ is an \emph{$\epsilon,c$-topological perturbation} of a restriction $\alpha_0: \Z^2\to D$ of the Weyl chamber flow if there exists a family of continuous foliations $\W^i_\alpha$ and a center foliation $\W^c_\alpha$ that are locally transverse and preserved by the entire action $\alpha$, and constants $0<\epsilon\ll c<1$, such that:
 \begin{itemize}
 \item The foliations $\W^i_\alpha$ and $\W^c_\alpha$ are $C^0$-close to the corresponding Lyapunov foliations $\W^i_{\alpha_0}$ and center foliation $\W^c_{\alpha_0}$ of the Weyl chamber flow.

 \item For any $a\in \Z^2$, $\alpha(a)$ is projectively $\epsilon/2$-close to $\alpha_0(a)$.
 
 \item For any $a\in \Z^2$ such that $\alpha_0(a)$ is outside the $\epsilon$-cone of the Weyl chamber wall $\ker\;\chi_i$, $\alpha(a)$ contracts or expands $\W^i_\alpha$ exponentially at an exponential rate $c\cdot\chi_i(\alpha_0(a))$, i.e., for any $x\in X$, $y\in \W^i_\alpha(x)$, there exists $N>0$ (that may depend on $x,y$), such that for any $|n|>N, \,\mathrm{sign}(n)=-\mathrm{sign}(\chi_i(\alpha_0(a))),$ $$d(\alpha(na)x,\alpha(na)y)\le e^{c\chi_i(\alpha_0(a))n}.$$
 
 \item The contraction or expansion rate of $\alpha(a)$ along $\W^c_\alpha$ is $\le \epsilon^2|\alpha_0(a)|$ for any $a\in\Z^2$, i.e., for any $x\in X$, $y\in \W^c(x,loc)$ and any $n\in \Z$, $$e^{-\epsilon^2|\alpha_0(a)||n|}d(x,y)\le d(\alpha(na)x,\alpha(na)y)\le e^{\epsilon^2|\alpha_0(a)||n|}d(x,y).$$

 \item For each pair of roots $\chi_i,\chi_j$ such that $\chi_i|_{\alpha_0}\notin \R\cdot \chi_j|_{\alpha_0}$, there exist $a\in \Z^2$ inside each chamber given by $\chi_i,\chi_j$, such that $\alpha_0(a)$ is outside the $\epsilon$-cone of the Weyl chamber wall $\ker\;\chi_i$.
 \end{itemize}
\end{defi}

We now prove that $f$ is a topological perturbation of a genuinely higher-rank restriction of the Weyl chamber flow.

\begin{prop}\label{ph} 
Let $f$, $f_0$ satisfy the conditions of Proposition \ref{gph}. For any $0<\epsilon<0.01$, if $d_{C^1}(f,f_0)$ is sufficiently small, then there exists $\bar \epsilon\le \epsilon$, such that $f$ is part of an action $\alpha:\Z^2 \to \mathrm{Diff}^\infty(X)$ that is a $\bar \epsilon,c$-topological perturbation of a genuinely higher-rank restriction $\alpha_0$ of the Weyl chamber flow, where $c$ depends only on $f_0$.
\end{prop}
\color{black}
Later, in Section \ref{alphaph}, we will show that this action is in fact partially hyperbolic and that the coarse Lyapunov foliations agree with the fine topological foliations described above.

Heuristically, $g$ is projectively close to some translation $g_0$ in each center leaf. The translation $g_0$ exponentially contracts or expands the fine stable and unstable leaves of the Weyl chamber flow, due to properties of the Cartan subalgebra. Assuming that the geometry of the leaves of $f$ is similar to that of $f_0$, $g$ should also exponentially contract or expand the fine stable and unstable leaves of $f$, implying that $g$ is partially hyperbolic. The signature of the exponents is determined by the Weyl chamber in which $g_0$ lies.

We now try to make a rigorous argument with this picture in mind.

\subsection{Canonical choice of the topological foliations}\label{sec:choicealphafolia}
We now construct the topological foliations we shall use for $\alpha$, which we will show are uniformly expanded or contracted by many elements of $\CZ^r_0(f)$.

For any non-zero eigenvalue $\chi_i:\mathfrak{h}\to \R$ of the adjoint action $\mathrm{ad}|_{\mathfrak{h}}$ on the Lie algebra $\mathfrak{g}$, let $\W^i_{f_0}$ denote the foliation given by the unique integration of the eigenspace $\mathfrak{g}_i$, and let $\W^{i,c}_{f_0}$ denote the foliation given by the unique integration of $\mathfrak{g}_i \oplus \mathfrak{h}$ in $G$. 

We define 
\[
\W^{i,c}_\# = \phi(\W^{i,c}_{f_0})
\] 
as the topological leaves given by the leaf conjugacy $\phi$, and set 
\[
\W^i_\# := \W^{i,c}_\# \cap \W^{s/u}_f.
\]
We claim that $\W^i_\#$ is a topological sub-foliation of $\W^{s/u}_f$. We assume without loss of generality that $\chi_i(f_0)<0$. Then in a local piece of leaf $\W^{c,s}_f(x,loc)$, the leaves of $\W^i_\#$ are exactly  $\W^i_\#(x,loc)=\{\pi_s(\phi(y)): y\in \W^i_{f_0}(\phi^{-1}(x),loc)\}$, where $\pi_s$ is the local projection $\W^{c,s}_f(x,loc)\to \W^{s}_f(x,loc)$, given by the holonomy along center foliations. Moreover, since $\phi$ is the leaf conjugacy and $\W^s_f$ and $\W^c_f$ form a local product structure, the map $\pi_s\circ \phi$ is injective. This shows that $\W^i_\#$ a topological sub-foliation of $\W^{s/u}_f$ whose leaves are $C^0$-close and locally homeomorphic to leaves of $\W^i_{f_0}$.

For any $g\in \CZ^r_0(f)$, $g$ preserves the topological foliation $\W^i_\#$ because $g$ fixes the center leaves, hence fixing $\W^{i,c}_\#$, and also preserves the foliations $\W^s_f$ and $\W^u_f$.

The following sections will establish exponential contraction and expansion along these topological foliations.

\subsection{Exponential contraction on leaves}

We now prove that $g\in\CZ^r_0(f)$ acts by contraction and expansion as expected on the topological leaves $\W^i_\#$ in the case that some $g$ is projectively close to a translation.

\begin{lem}\label{gcontr} 
For any $\epsilon>0$, let $g\in \CZ^r_0(f)$ be projectively $\epsilon/2$-close to a translation $g_0$ not in the $\epsilon$-cone of the Weyl chamber wall $\ker(\chi_i)$. Then for every $x\in X$ and $y\in \W^i_\#(x)$, there exists $N>0$ such that for any $n>N$:
\[
d(g^n(x),g^n(y)) \le e^{\chi_i(g_0) \eta \cdot n/4}, \quad \text{if } \chi_i(g_0)<0,
\] 
\[
d(g^{-n}(x),g^{-n}(y)) \le e^{-\chi_i(g_0) \eta \cdot n/4}, \quad \text{if } \chi_i(g_0)>0.
\] 
Here $\eta<1$ is the bi-H\"older constant of the leaf conjugacy $\phi$.
\end{lem}

\begin{proof}[Proof of Lemma \ref{gcontr}]
See Figure \ref{lypest} for a more intuitive explanation.

\begin{figure}[h]
 \centering
 \includegraphics[width=0.8\textwidth]{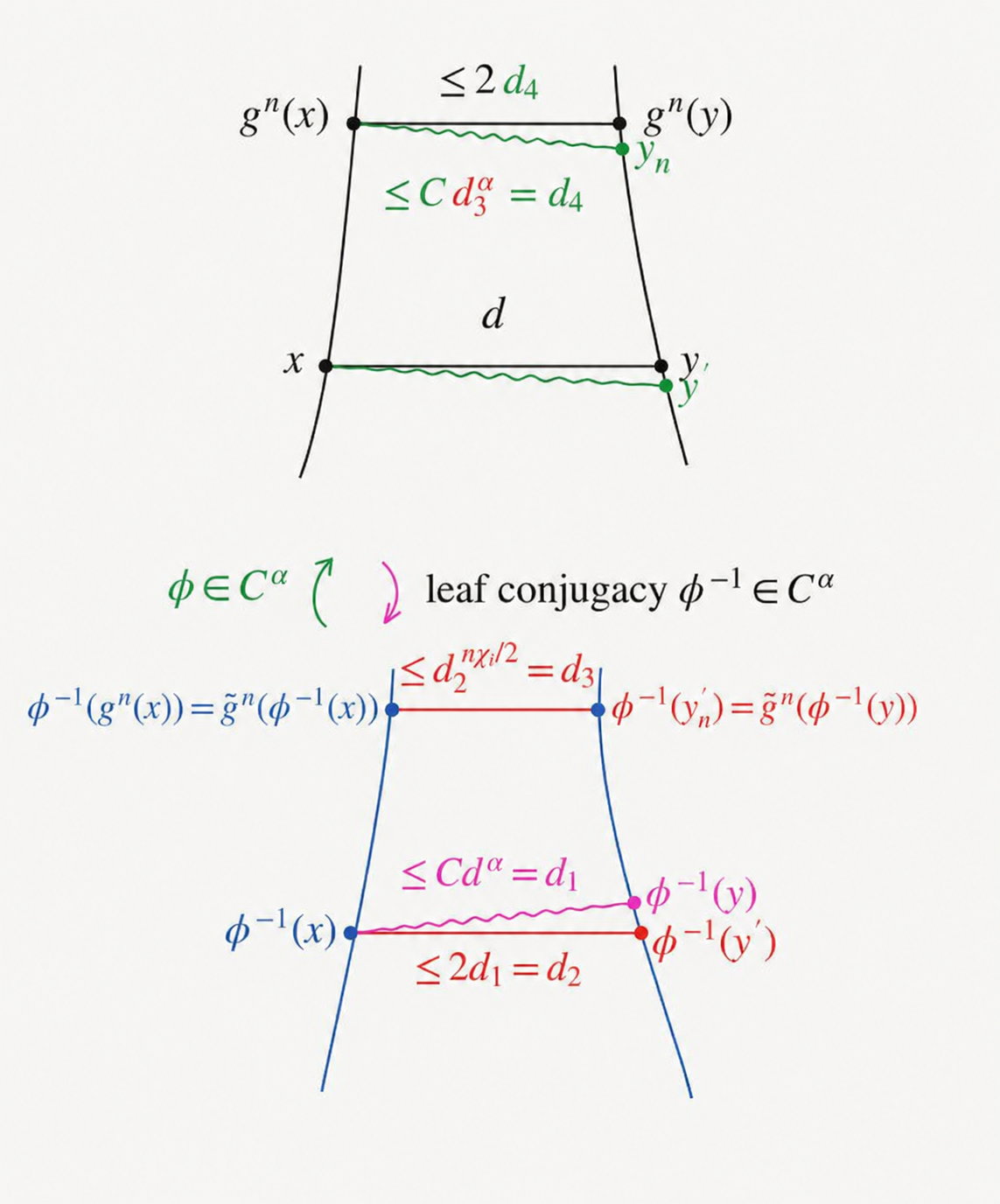}
 \caption{Contraction rate of $g$ along $\W^i_\#$}
 \label{lypest}
\end{figure}

We restrict our discussion to the case $\chi_i(g_0)<0$; the other case follows by considering $g^{-1}$ instead of $g$.

Denote by $\rho(g_0):=\log(g_0(x)x^{-1})$ the translation vector of $g_0$ in $\W^c_f\simeq \R^k$. Denote by $\tilde{g}$ the lifting of $\phi^{-1} g\phi$ where $\phi$ is the leaf conjugacy.

 For any $n\in \N$, by Corollary \ref{trorf} and Lemma \ref{transadd}, we have 
\begin{equation}\label{neartrans}
|\tilde{g}^n(z) - z - n \rho(g_0)| \le \epsilon n \|\rho(g_0)\|/2
\end{equation}
for $z \in X$. Since the $g_0$ is not in the $\epsilon$-cone of the Weyl chamber wall $\ker(\chi_i)$, we have $\chi_i(t) \le \chi_i(g_0)/2$ for any $t \in \mathfrak{h}$ with $d(t,\rho(g_0))<\epsilon\|\rho(g_0)\|/2$. By the geometry of the $f_0$ foliations, for any $z \in X$ and any $w \in \W^i_{f_0}(z)$, we have
\[
d(\hat{g}^n(z), \W^c_{f_0}(w)\cap \W^i_{f_0}(\hat{g}^n(z))) \le e^{n \chi_i(g_0)/2} d(z,w).
\]

Using the H\"older continuity of the leaf conjugacy $\phi$, any $x \in X$ and every $y \in \W^i_\#$ with $d(x,y)\le r$, set 
\[
y' = \phi(\W^i_{f_0}(\phi^{-1}x)) \cap \W^c_f(y), \quad 
y'_n = \phi(\W^i_{f_0}(\phi^{-1}g^n(x))) \cap \W^c_f(g^n(y)).
\] 

Suppose $\W^i_{\#}$ is a subfoliation of $\W^*_f,*=s,u$. We note that the angle between $E^c_f,E^*_f$ are close $\pi/2$, so there exists $r>0$, such that if $d_{\W^{c*}_f}(z,w)<r$, then we have $d(z,\W^c(w,loc))\ge 2 d_{\W^*_f}(x,\W^*_f(z, loc)\cap\W^c(w,loc)) $. Similar equations hold for $f_0$ for the same reason. 

This shows that, for any $y\in \W^i_\#(x)$, $d_{\W^{c*}_f}(x,y)\le r$, we have
\[
d(g^n(x), g^n(y)) \le 2 d(g^n(x), y'_n) \le 2 C d(\hat{g}^n(\phi^{-1}x), \phi^{-1}(y'_n))^\eta
\]
\[
\le 2 C (e^{n \chi_i(g_0) /2} d(\phi^{-1}(x), \phi^{-1} y'))^\eta
\le 4 C e^{n \eta \chi_i(g_0) /2} d(\phi^{-1}(x), \phi^{-1}(y))^\eta\]\[
\le 4 C^2 e^{n \eta \chi_i(g_0) /2} d(x,y)^{\eta^2}.
\]

When $d(x,y)$ is larger, by taking intermediate pieces of length $r$, and choosing $n>N$, where $N$ is sufficiently large, we obtain the desired estimate.
\end{proof}

\subsection{Small exponent in the center }
In this section, we show that elements $g\in \CZ^r_0(f)$ have very small exponential expansion or contraction along the center foliation of $f$.

\begin{prop}\label{prop:smallexp}
 Given constants $\epsilon>0$, $R>1$ and a base point $x_0\in X$. There exists $\delta>0$, such that if $d_{C^1}(f,f_0)<\delta$, then for any $g\in \CZ^r_0(f)$, such that $d_{\tilde\W^c_f}(\tilde g(\tilde x_0),\tilde x_0)<R$, we have $$e^{-\epsilon |n|}\|v\|\le \|Dg^n(v)\|\le e^{\epsilon |n|}\|v\|$$ for any $n\in \Z$, any $x\in X$ and any $v\in E^c_f(x)$.
\end{prop}

\begin{proof}
Fix a base point $x_0\in X$. We note that each leaf of $\tilde \W^c_f$ is isomorphic to $\R^k$, where $k=\mathrm{rank} (G)$, so we may treat $Dg|_{E^c_f}(y)$ just as matrices for $y\in X$. We denote by $A=Dg|_{E^c_f}(x_0)$. 

 For any $y\in X$, we pick a $f$-$su$-path $\gamma$ from $x_0$ to $y$ of uniformly bounded legs and length. Then the holonomy $h^f_\gamma: \tilde \W^c_f(x_0) \to \tilde \W^c_f(y)$ satisfies $h^f_\gamma(g(z))=g(h^f_\gamma(z))$ for any $z\in \tilde \W^c_f(x_0)$. Since $d_{\tilde\W^c_f}(\tilde g(\tilde x_0),\tilde x_0)<R$, the holonomy path from $\tilde g(\tilde x_0)$ to $\tilde g(\tilde y)$ given by the holonomy $h^f_\gamma$ is also of uniformly bounded legs and length depending only on $G$ and $R$.
 
 Therefore by Theorem \ref{c1hol}, and because the differentials of holonomies of $f_0$ are the identity, for any $\epsilon_1>0$, there exists $\delta>0$ such that we have $$\|(Dh^f_\gamma(\tilde x_0))^{-1}-\mathrm{Id}\|,\|Dh^f_\gamma(\tilde g(\tilde x_0))-\mathrm{Id}\|\le \epsilon_1.$$

 Again using $h^f_\gamma(g(z))=g(h^f_\gamma(z))$ and plugging in $z=x_0$, we get $$Dg|_{E^c_f}(y)=Dh^f_\gamma(\tilde{g}(\tilde x_0))\cdot Dg|_{E^c_f}(x_0)\cdot (Dh^f_\gamma(\tilde x_0))^{-1}.$$

 Therefore, we show that the differential of $g$ restricted to $E^c_f$ is almost constant, i.e., we have 
 \begin{multline}\label{equ:Dg bound}
 \|Dg|_{E^c_f}(y)-A\|= \|Dh^f_\gamma(\tilde{g}(\tilde x_0))\cdot A\cdot (Dh^f_\gamma(\tilde x_0))^{-1}-A\| \\
 \le\|(Dh^f_\gamma(\tilde{g}(\tilde x_0))-\mathrm{Id})\cdot A\cdot (Dh^f_\gamma(\tilde x_0))^{-1}\|+\|A\cdot ((Dh^f_\gamma(\tilde x_0))^{-1}-\mathrm{Id})\|\\\le 3\epsilon_1\|A\|.
 \end{multline}

 Moreover, for any $\epsilon_0>0$, if $d_{C^1}(f,f_0)$ is sufficiently small, then by Proposition \ref{z2:dtof0}, we have $d(\tilde g(y),g_0(y))\le \epsilon_0 \,d(g_0(x_0),x_0)$, for any $y\in X$. 
 
 Combining the bound on the differential and the bound on the map, we now claim $\|A\|\le \frac{1}{1-3\epsilon_1}$. Indeed, for any $x\in G$, identifying $\tilde \W^c_f(x)\simeq \R^k$, then for any $v\in \R^k,\|v\|=1$, let $y=x+tv$, then we have $$\|g(y)-g(x)-tAv\|=\|\int_{s=0}^t(Dg|_{E^c_f(x+sv)}-A)(v)\cdot ds\|\le 3\epsilon_1 \|A\|t.$$ On the other hand, $$\|g(y)-g(x)-tAv\|\ge \|g_0(y)-g_0(x)-tAv\|-\|g(y)-g_0(y)\|-\|g(x)-g_0(x)\|$$ $$=\|y-x-tAv\|-\|g(y)-g_0(y)\|-\|g(x)-g_0(x)\|$$$$=t\|Av-v\|-\|g(y)-g_0(y)\|-\|g(x)-g_0(x)\|$$$$\ge t\|(A-\mathrm{Id})v\|-2\epsilon_0R$$ Combining these and let $t\to \infty$, we get $\|A-\mathrm{Id}\|\le 3\epsilon_1 \|A\|$, which implies that $\|A\|\le \frac{1}{1-3\epsilon_1}$. Therefore, by Equation \ref{equ:Dg bound} we have $\|Dg|_{E^c_f}(y)\|\le \frac{1+3\epsilon_1}{1-3\epsilon_1}$ for any $y\in X$. Take $\epsilon_1$ small enough so that $\frac{1+3\epsilon_1}{1-3\epsilon_1}<e^\epsilon$, we get the upper bound on the growth.

 The same argument applied to $g^{-1}$ instead of $g$ gives the lower bound on the growth.
\end{proof}

\subsection{Weyl chamber picture and choice of $\alpha_0$}\label{sec:weylchamchoice}

For any $\Z^2$-restriction $\alpha_1$ of the Weyl chamber flow, we consider the 2-planes $P$ in $\mathfrak h$ spanned by the two generators, and the restriction of the Weyl chamber walls $\ker \,\chi_i,1\le i\le K$ in $P$. If $P$ does not lie in any of the Weyl chamber walls, then the restrictions
$\ker\chi_i\cap P$ give a finite collection of lines
$\ell_1,\ldots,\ell_K\subset P$.
We use the canonical cyclic order on the projective line of $P$. For each ordered pair $(i,j)$, let $\theta_{ij}\in[0,\pi)$ denote the oriented angle
from $\ell_i$ to $\ell_j$ in this cyclic order, with $\theta_{ii}=0$. For any $\Z^2$ restriction $\alpha_1$, we denote by $\theta(P)=\{\theta_{ij}\}$ the angles between the lines $\ell_i$ in $P$.

Consider the set $\mathcal P$ which is the set of $2-$planes in $\mathfrak{h}$ that contain $f_0$. For any pair of walls $i,j$, the angle $\theta_{ij}:\mathcal P\to \R_{\ge 0}$ is a piecewise-smooth function that has no local minimum except zero as $P$ ranges in $\mathcal P$. 

Let $Z_{ij}=\{P\subset \mathcal{P}:\theta_{ij}(P)=0\}$ be the zero set of $\theta_{ij}$. Then $Z_{ij}$ is simply a hyperplane in $\mathcal{P}\simeq P(\R^{k-1})$. Furthermore, there exists a constant $C_1>1$ such that $\frac{1}{C_1}\cdot d(P,Z_{ij})\le \theta_{ij}(P)\le C_1\cdot d(P,Z_{ij})$.

For each subset $I\subset \{(i,j): 1\le i<j\le K\}$, we denote by $Z_I=\cap_{(i,j)\in I}Z_{ij}$. Then each $Z_I$ is also a projected subspace of $\mathcal{P}\simeq P(\R^{k-1})$, or the empty set. Let $t=\max\{|I|:Z_I\neq \varnothing\}$.

Fix $0<\epsilon<1$ as in Proposition \ref{ph}, then we have the following.

\begin{lem}\label{lem:anglecontrol}
 There exists $\epsilon/C_1>\epsilon_t>\epsilon_{t-1}>\cdots>\epsilon_1$, such that if $d(P,Z_I)<\epsilon_{|I|}$ and either $d(P,Z_{J})\ge\epsilon_{|J|}$ for any $J\supsetneq I$, or $|I|=t$; then there exists $P^\prime \in Z_{I}$, such that $d(P,P^\prime)=d(P,Z_I)$, while $d(P^\prime,Z_{ij})\ge 8C_1\epsilon_{|I|}$, for any $(i,j)\notin I$. 
\end{lem}
\begin{proof}
 We shall pick $\epsilon_i$ one by one, starting with $\epsilon_t$.
 
 We denote by $B(Y,r)$ the open $r$-neighborhood of $Y$ in $ \mathcal P$.
 
 We first pick $\epsilon_t$. The sets $Z_{I},|I|=t$ is compact and $Z_{I}\cap Z_{ij}=\varnothing$ if $(i,j)\notin I$. Therefore, for each $Z_{I}$, \begin{equation}\label{equ:emint}
 B(Z_{I},r)\cap (\cup_{(i,j)\notin I} Z_{ij})=\varnothing
 \end{equation} when $r$ is sufficiently small. Since there are only finitely many $Z_{I}\neq\varnothing,|I|=t$, we may pick $\epsilon_t<\epsilon/C_1$ small enough, so that Equation \ref{equ:emint} holds for $r=8C_1\epsilon_t$ for any $|I|=t$. 

 If we already picked $\epsilon_t,\cdots,\epsilon_{s+1}$, we take $\epsilon_s$ as follows. For each $|I|=s$, the set $Z_{I}^\prime=Z_{I}-\cup_{J\supsetneq I} B(Z_{J},\epsilon_{|J|})$ is compact, and disjoint from any $Z_{ij}, (i,j)\notin I$. Therefore, \begin{equation}\label{equ:emint2}
 B(Z_{I}^\prime,r)\cap (\cup_{(i,j)\notin I} Z_{ij})=\varnothing
 \end{equation} when $r$ is sufficiently small. Again, since there are only finitely many such sets, we may pick $\epsilon_s<\epsilon_{s+1}$ small enough, so that Equation \ref{equ:emint2} holds for $r=8C_1\epsilon_s$.

 Picking $\epsilon_t$ to $\epsilon_1$ like this, we have for any $P\in \mathcal P$, satisfying the assumption, then $P\in B(Z^\prime(I),\epsilon_{|I|})$, so there exists $P^\prime\in Z^\prime_{I}\subset Z_{I}$, such that $d(P^\prime, \cup_{(i,j)\notin I} Z_{ij})\ge 8C_1\epsilon_{|I|}$ which proves our claim.
\end{proof}

We note that all the constants defined above depend only on $G$, $f_0$ and $\epsilon$. We choose $\delta$ small enough, so that any $g\in \CZ^r_0(f)$ is projectively $\frac{\epsilon_1}{8C_1}$-close to the left translation by $(\tilde g(x)x^{-1})$ if $d_{\tilde \W^c_f}(\tilde g(x),x)\le 3\rho(f_0)$. (by Proposition \ref{z2:dtof0}).

Now we suppose $g\in \CZ^r_0(f)$ is projectively $0.01$-close to a translation outside the $0.1$-cone of $f_0$ in every simple component of the Lie algebra. Let $\alpha=\langle f,g\rangle$. We take $g_0$ to be the left translation of $(\tilde g(x)x^{-1})$. 

Let $\mathcal P$ be the plane generated by $\langle f_0,g_0\rangle$. 

If $\theta_{ij}\ge \epsilon_1/C_1$ for any $i\neq j$, then we take $\alpha_0=\langle f_0,g_0\rangle$, and $\bar{\epsilon}=\frac{\epsilon_1}{4C_1}$. 

Then $g$ is projectively $ \bar\epsilon/2$-close to $g_0$ and $f$ is arbitrarily close to $f_0$, by Lemma \ref{transadd} and our assumption $g$ is not in the $0.1$-cone of $f_0$, we have for any $a\in \Z^2$, $\alpha(a)$ is projectively $\bar\epsilon/2$-close to $\alpha_0(a)$.

Moreover, by our choice of $\bar{\epsilon}$, each pair of Weyl chamber walls of $\alpha_0$ has angle at least $ \epsilon_1/C_1=4\bar \epsilon$. This shows that in each Weyl chamber of $\alpha_0$, there exists $a\in \Z^2$ outside the $\bar\epsilon$-cone of the walls. 

If $\theta_{ij}< \epsilon_1/C_1$ for some pair $(i,j)$, then we have $d(P,Z_{ij})< \epsilon_1$. We pick $|I|$ to be a maximal subset of $\{(i,j):1\le i,j\le K\}$, such that $d(P,Z_I)< \epsilon_{|I|}$ (the set of such $I$ is non-empty by our assumption). Then by Lemma \ref{lem:anglecontrol}, there exists $P^\prime \in Z_{I}$, such that $d(P,P^\prime)=d(P,Z_{I})$ and $d(P^\prime, Z_{ij})\ge 8C_1\epsilon_{|I|}$. Take $\bar \epsilon=2\epsilon_{|I|}$. 

Pick $\alpha_0=\langle f_0,g_0^\prime\rangle$ to be the action generated by $f_0$ and $g_0^\prime$, where $g_0^\prime\in P^\prime$ realizes the minimal distance to $g_0$ in $P^\prime$. By our assumption, $g$ is projectively $ \epsilon_{|I|}/4$-close to $g_0$, which implies that $g$ is projectively $ \frac{\bar\epsilon}{20}$-close to $g_0^\prime$ (since $g_0$ is not in the $0.1$-cone of $f_0$). Since $f$ is also in an arbitrarily small cone of $f_0$ when $\delta$ is sufficiently small, by Lemma \ref{transadd} and our assumption $g$ is not in the $0.1$-cone of $f_0$, we have for any $a\in \Z^2$, $\alpha(a)$ is projectively $\bar\epsilon/2$-close to $\alpha_0(a)$. 

Moreover, in the plane $P^\prime$, by Lemma \ref{lem:anglecontrol}, each non-zero angle $\theta_{ij}(P^\prime)\ge\frac{d(P^\prime,Z_{ij})}{C_1}\ge 8\epsilon_{|I|}=4\bar\epsilon$, therefore, for each Weyl chamber of $\alpha_0$, there exists $a$ such that $\alpha_0(a)$ lies in that chamber and $\alpha_0(a)$ is not in the $\bar\epsilon$-cone of the walls of that chamber.

\subsection{Proof of Proposition \ref{ph}}

 Now we give a proof of Proposition \ref{ph}.

\begin{proof}[Proof of Proposition \ref{ph}]

 Take $\alpha=\langle f,g\rangle $ to be the $\Z^2$ action generated by $f$ and $g$, and take $\W^i_\alpha = \W^i_\#$ as constructed in section \ref{sec:choicealphafolia}. By construction, $\W^i_\#$ is $C^0$-close to $\W^i_{\alpha_0}$ because the leaf conjugacy is $C^0$-close to the identity and $\W^{s/u}_f$ is $C^0$-close to $\W^{s/u}_{f_0}$.

 We construct $\alpha_0$ and pick $\bar\epsilon<\epsilon$ as in Section \ref{sec:weylchamchoice}. We proved in Section \ref{sec:weylchamchoice} that for any $a\in \Z^2$, $\alpha(a)$ is projectively $\bar\epsilon/2$-close to $\alpha_0(a)$.

 Pick $c=\eta/4$, where $\eta$ is the bi-H\"older constant, which can be chosen to depend only on $f_0$. Then Lemma \ref{gcontr} provides exponential contraction and expansion along the Lyapunov foliations at rate $c\cdot \chi_i(\alpha_0(a))$, if $\alpha_0(a)$ is not in the $\epsilon$-cone of the Weyl chamber wall $\ker\chi_i$. 

 Moreover, we have $d_{\tilde \W^c_f}(\tilde g(\tilde x_0),\tilde x_0)<3\rho(f_0)$. Therefore, Proposition \ref{prop:smallexp} implies that when $\delta$ is sufficiently small, we always have $e^{-\frac{\bar\epsilon^2 \|\rho(f_0)\|}{100}n}\le \|Dg^n|_{E^c_f}v\|\le e^{\frac{\bar\epsilon^2 \|\rho(f_0)\|}{100}n}$ for any $\|v\|=1$, independent of the choice of $g$. We have the same result for $f$. Therefore, for any element $a\in \Z^2$, $\alpha(a)=f^kg^\ell$, we have $e^{-\frac{\bar\epsilon^2 \|\rho(f_0)\|}{100}(|k|+|l|)n}\le \|D\alpha(a)^n|_{E^c_f}v\|\le e^{\frac{\bar\epsilon^2 \|\rho(f_0)\|}{100}(|k|+|l|)n}$. Since $g$ is not in the $0.1$-cone of $f_0$, and $\|\rho(f_0)\|\le \|\rho(\alpha_0(0,1))\|\le 2\|\rho(f_0)\|$, we get from here that the exponent of $\alpha(a)$ along $E^c_f$ is smaller than $\bar\epsilon^2|\alpha_0(a)|$.

 In Section \ref{sec:weylchamchoice}, we established that there are elements not in the $\bar\epsilon$-cone of the walls in every Weyl chamber cone of $\alpha_0$. This proves that $\alpha$ is a $(\bar\epsilon,c)$-topological perturbation of $\alpha_0$.

 Finally, note that we picked $g$ to be not in the $0.1$-cone of $f_0$ in each simple component of $G$, and $g$ is $\bar\epsilon$-close to $\alpha_0(0,1)$, where $\bar\epsilon<0.01$. This shows that $\alpha_0(0,1)$ is not in the $0.05$-cone of $f_0=\alpha_0(1,0)$ in each simple component of $G$, and so $\alpha_0$ is genuinely higher rank.
\end{proof}
\color{black}

\section{Rigidity of Weyl chamber flow}\label{wcf}

In this section, we conclude the proof of Proposition \ref{gph} using results on rigidity of higher-rank restrictions of the Weyl chamber flow, as established in \cite{DK}, \cite{Vinhage}, \cite{VinWang}, etc.

Recall that a semisimple Lie group $G$ is \emph{genuinely higher rank} if each of its simple components has rank $\ge 2$. Let $V \subset \mathfrak{h}$ be a subgroup of the Cartan subalgebra inducing an action
\[
\alpha_0: V \to \mathrm{Diff}(G/\Gamma), \quad v \mapsto L_{\exp(v)},
\]
which is a restriction of the Weyl chamber flow. We call this restriction \emph{genuinely higher rank} if its lifting to $G$ has no rank 1 factor. It is proved in Theorem 3.1 of \cite{VinWang} that a $C^1$-small smooth perturbation of a genuinely higher-rank restriction of the Weyl chamber flow $\alpha_0$ on $X = G/\Gamma$ is $C^\infty$ conjugate to a restriction of a Weyl chamber flow, provided $\alpha_0$ includes a generic element.

In our setting, we take a $\Z^2$-action generated by $f$ and $g \in \CZ^r_0(f)$ not in the same direction as $f$, as in Section \ref{eph}. As noted in Proposition \ref{ph}, $\alpha$ is not a priori a $C^1$-small perturbation of a restriction $\alpha_0$ of the Weyl chamber flow, so we cannot directly apply Theorem 3.1 of \cite{VinWang}. However, by Proposition \ref{ph}, $\alpha$ is a topological perturbation of a genuinely higher-rank action $\alpha_0$ in the sense of Definition \ref{topper}.

Therefore, the methods of \cite{VinWang} can still be applied. Below we present the outline, highlighting only the arguments that require adjustment.

\subsection{Cocycle formulation}

Given a group action $\alpha_1: A \to \mathrm{Diff}(X)$, a cocycle $\beta: A \times X \to D$ is a map satisfying
\[
\beta(ab, x) = \beta(a, \alpha_1(b)(x)) \, \beta(b, x), \quad \forall a, b \in A,
\]
taking values in the diagonal group $D$. A cocycle is cohomologous to a constant if there exists a homomorphism $s: A \to D$ and a transfer map $H: X \to D$ such that
\[
\beta(a, x) = H(\alpha_1(a)(x)) \, s(a) \, H(x)^{-1}, \quad \forall a \in A, x \in X.
\]

We consider the following cocycle:

\begin{prop}
Let $\alpha$ be the $\Z^2$-action as in Proposition \ref{ph}, and let $\hat{\alpha}$ denote the leaf conjugacy of $\alpha$:
\begin{equation}\label{hatalpha}
\hat{\alpha}(a) = \phi^{-1} \alpha(a) \phi,
\end{equation}
where $\phi$ is the leaf conjugacy from $f_0$ to $f$. Then the map
\begin{equation}\label{cocbeta}
\hat{\beta}(a, x) = \widetilde{\hat{\alpha}(a)}(\tilde x) \, \tilde x^{-1} \, \alpha_0(a)^{-1}
\end{equation}
is a H\"older cocycle over $\hat{\alpha}$ taking values in the Cartan subgroup $D$. Here, $\alpha_0(a)$ denotes the unique element in $D$ such that $\alpha_0(a)(x) = \alpha_0(a) \cdot x$, $\widetilde{\hat{\alpha}(a)}$ is the lift of the homeomorphism $\hat{\alpha}(a)$ to $G$ that fixes the center leaves of $f_0$, and $\tilde x$ is a lifting of $x$.
\end{prop}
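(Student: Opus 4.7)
The plan is to verify in turn the three content assertions of the proposition: that the value $\hat\beta(a,x)$ lies in the maximal torus $D$, that $\hat\beta$ satisfies the cocycle identity over $\hat\alpha$, and that $\hat\beta$ is H\"older in $x$. Throughout I would first pass to a finite-index sub-action of $\alpha$ using Lemma \ref{gfixgamma}, so that $\hat\alpha(a)_{*}=\mathrm{id}$ on $\Gamma$ for every $a\in\Z^{2}$; this makes the canonical center-leaf-fixing lift $\widetilde{\hat\alpha(a)}:G\to G$ equivariant under the right $\Gamma$-action, so that $\widetilde{\hat\alpha(a)}(x)\,x^{-1}$ descends to a well-defined function of $x\in G/\Gamma$. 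For the first assertion, I would simply note that the $f_0$-center leaf through $x\in G$ is the left coset $Dx$; since the lift preserves this coset, $\widetilde{\hat\alpha(a)}(x)\,x^{-1}\in D$, and since $\exp(\rho(\alpha(a)))\in D$ as well, the product defining $\hat\beta$ lies in the subgroup $D$.

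For the cocycle identity the first step is to check that $\rho:\Z^{2}\to\mathfrak h$ is an additive homomorphism. Since $f$ and $g$ commute, a short computation shows $|(\widetilde{hg})(x)-x-(\tilde h(x)-x)-(\tilde g(x)-x)|$ is uniformly bounded in $x$, so the Birkhoff time averages add and $\rho$ is indeed a homomorphism. Writing $y=\widetilde{\hat\alpha(b)}(x)$ and using $\widetilde{\hat\alpha(ab)}=\widetilde{\hat\alpha(a)}\circ\widetilde{\hat\alpha(b)}$, I get
\[
\widetilde{\hat\alpha(ab)}(x)\,x^{-1}=\bigl(\widetilde{\hat\alpha(a)}(y)\,y^{-1}\bigr)\bigl(\widetilde{\hat\alpha(b)}(x)\,x^{-1}\bigr).
\]
Multiplying on the right by $\exp(-\rho(\alpha(ab)))=\exp(-\rho(\alpha(a)))\exp(-\rho(\alpha(b)))$ and using commutativity of $D$ to interleave factors yields $\hat\beta(ab,x)=\hat\beta(a,\hat\alpha(b)(x))\,\hat\beta(b,x)$, where the identification $\hat\beta(a,y)=\hat\beta(a,\hat\alpha(b)(x))$ uses well-definedness on $G/\Gamma$ established above.

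Finally, H\"older regularity of $\hat\beta(a,\cdot)$ will follow because $\hat\alpha(a)=\phi\circ\alpha(a)\circ\phi^{-1}$, where $\phi$ is bi-H\"older by Theorem \ref{leafconj} and $\alpha(a)$ is smooth; its center-leaf-fixing lift is therefore bi-H\"older on a compact fundamental domain, and $x\mapsto\widetilde{\hat\alpha(a)}(x)\,x^{-1}$ is this bi-H\"older map composed with the smooth group operations of $G$. The main obstacle I foresee is making precise that the choice of center-fixing lift is consistent under composition, i.e.\ that $\widetilde{\hat\alpha(ab)}=\widetilde{\hat\alpha(a)}\circ\widetilde{\hat\alpha(b)}$ holds on the nose rather than up to a deck transformation. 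The normalization of fixing each $D$-coset, combined with the condition $\hat\alpha(a)_{*}=\mathrm{id}$ coming from Lemma \ref{gfixgamma}, should pin down a unique continuous lift, but I would need to verify carefully that the assignment $a\mapsto\widetilde{\hat\alpha(a)}$ into the group of $D$-coset-preserving homeomorphisms of $G$ is genuinely a group homomorphism, not merely a crossed one.
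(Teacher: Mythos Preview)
Your proposal is correct and follows essentially the same approach as the paper: the paper's proof consists solely of the cocycle-identity computation, which is line-for-line what you wrote. You are simply more thorough, supplying the easy verifications (values in $D$, additivity of $\rho$, H\"older regularity via bi-H\"olderness of the leaf conjugacy) that the paper leaves implicit; the obstacle you flag about consistency of the lift is resolved exactly as you suspect, since the center-fixing lift is unique (each $D$-coset is connected and simply connected, so a $D$-coset-preserving homeomorphism of $G/\Gamma$ with trivial action on $\Gamma$ has a \emph{unique} continuous lift fixing each $D$-coset of $G$), and the composition of two such lifts is again center-fixing, hence equals the canonical lift of the composition.
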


\begin{proof}
The translation $\widetilde{\hat{\alpha}(a)}(\tilde x) \, \tilde x^{-1}\in D$ is independent of the lifting $\tilde x$, since our choice of $\alpha$ consists of elements in $\CZ^r_0(f)$.

We only need to check that $\hat{\beta}$ satisfies the cocycle identity. Using the definition and the fact that $D$ is abelian:
\[
\begin{aligned}
\hat{\beta}(ab, x) &= \widetilde{\hat{\alpha}(ab)}(\tilde x) \, \tilde x^{-1} \, \alpha_0(ab)^{-1} \\
&= \widetilde{\hat{\alpha}(a)}(\widetilde{\hat{\alpha}(b)}(\tilde x)) \, \tilde x^{-1} \, \alpha_0(b)^{-1} \alpha_0(a)^{-1} \\
&= \widetilde{\hat{\alpha}(a)}(\widetilde{\hat{\alpha}(b)}(\tilde x)) \, (\widetilde{\hat{\alpha}(b)}(\tilde x))^{-1} \alpha_0(a)^{-1} \cdot (\widetilde{\hat{\alpha}(b)}(\tilde x) \, \tilde x^{-1}) \, \alpha_0(b)^{-1} \\
&= \hat{\beta}(a, \hat{\alpha}(b)(x)) \cdot \hat{\beta}(b,  x).
\end{aligned}
\]
This proves the claim.
\end{proof}

We now state the cocycle rigidity result:

\begin{thm}\label{cocrig}
Let $\hat{\alpha}$ be the $\Z^2$-action given in Equation \eqref{hatalpha}. Then any H\"older continuous cocycle over $\hat{\alpha}$
\[
\beta: \Z^2 \times X \to D
\]
that is $C^0$-close to a constant is cohomologous to a constant via a H\"older continuous transfer map $H$.
\end{thm}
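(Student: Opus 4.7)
The plan is to follow the Damjanovi\'c--Katok scheme for cocycle rigidity of genuinely higher rank abelian actions, as implemented in \cite{VinWang}, but staying at the H\"older/topological level since our Lyapunov foliations $\W^i_\#$ are only $C^0$-close to the algebraic foliations of $\alpha_0$, not smooth perturbations. The fact that the target group $D$ is abelian is crucial: it lets us freely add and subtract values of $\beta$ along different legs of a path.

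First I would build the transfer map one coarse Lyapunov foliation at a time. Fix $\W^i_\#$; by Proposition \ref{ph} there is some $a=a_i\in\Z^2$ such that $\hat{\alpha}(a)$ contracts $\W^i_\#$ exponentially at a rate $\chi_i<0$. For $x,y$ in the same leaf of $\W^i_\#$ I would set
\[
H_i(x,y)=\sum_{n=0}^{\infty}\bigl(\beta(a,\hat{\alpha}(a)^n y)-\beta(a,\hat{\alpha}(a)^n x)\bigr),
\]
which converges uniformly since $\beta$ is H\"older and the iterates $\hat{\alpha}(a)^n x,\hat{\alpha}(a)^n y$ approach each other at rate $e^{n\chi_i}$. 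A standard telescoping argument shows $H_i$ satisfies a leafwise coboundary identity for $\beta(a,\cdot)$; and using the cocycle equation together with commutativity of $\hat{\alpha}$, one checks that $H_i(x,y)$ is independent of the choice of $a$ in the (open) half-space of elements that contracts $\W^i_\#$. This also gives $\hat{\alpha}$-equivariance of $H_i$.

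Next I would assemble these pieces into a global transfer map. Fix a basepoint $x_0$. Because $f$ is accessible (Proposition \ref{faccess}) and the stable/unstable foliations split further into the coarse Lyapunov foliations $\W^i_\#$, every $x\in X$ is the endpoint of a concatenation $x_0=z_0,z_1,\ldots,z_N=x$ whose legs lie in various $\W^{i_k}_\#$. Tentatively I would set
\[
H(x)=\sum_{k=1}^N H_{i_k}(z_{k-1},z_k).
\]
The main obstacle, and the heart of the argument, will be showing that $H$ is independent of the chosen concatenation, i.e.\ that the sum vanishes around any closed $su$-cycle. Following \cite{DK} and \cite{VinWang}, a general closed cycle reduces to a finite list of \emph{basic cycles} arising from the Lie-bracket relations among the root spaces $\mathfrak{g}_\lambda$ of $\mathfrak{g}$; for each basic cycle one picks a single element of $\Z^2$ whose interior of contracting Weyl chamber contains all the foliations appearing in the cycle, and the telescoping sums then collapse to an algebraic identity in $D$ which closes up because its infinitesimal (root-theoretic) version does. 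Transposing the argument from the smooth algebraic setting of \cite{VinWang} to our topological one requires replacing the smooth-holonomy estimates there by the uniform $C^0$-closeness of $\W^i_\#$ to $\W^i_{\alpha_0}$ and by the $C^0$-smallness of $\beta$ from a constant; this is the step I expect to be the most delicate, since one must quantify how a closed cycle of $\W^i_\#$-legs differs from the corresponding closed cycle of $\W^i_{\alpha_0}$-legs and absorb the error into the contraction estimate.

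Once $H$ is defined and continuous on $X$, the coboundary equation $\beta(b,x)=H(\hat{\alpha}(b)x)\,s(b)\,H(x)^{-1}$ is a formality: the $\hat{\alpha}$-equivariance of each $H_i$, together with the cocycle identity for $\beta$, shows that the expression $H(\hat{\alpha}(b)x)^{-1}\beta(b,x)H(x)$ is constant along each $\W^i_\#$-leaf for every $i$, hence constant on $X$ by accessibility, and this constant is the homomorphism $s(b)$. Continuity of $H$ is inherited from the uniform convergence of the defining series for each $H_i$ and from continuity of the local holonomy structure used to join points by $su$-paths.
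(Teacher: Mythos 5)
Your scheme is at bottom the same as the paper's: the leafwise sum $H_i(x,y)=\sum_{n\ge 0}\bigl(\beta(a,\hat{\alpha}(a)^ny)-\beta(a,\hat{\alpha}(a)^nx)\bigr)$ is exactly the potential $p_\beta(x,y)$ entering the $su$-cycle functional $P_\beta$, and the equivalence between well-definedness of your path-sum $H$ and triviality of $P_\beta$ on cycles is Proposition \ref{consttrivialfunctional}. The gap is in the step you yourself identify as the heart of the argument: proving that the sum vanishes around every closed cycle.

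Concretely, two things are missing. First, you only discuss cycles that reduce to ``basic cycles arising from Lie-bracket relations,'' i.e. cycles whose lift to $G$ is still closed. But $C_{x_0}(\hat{\alpha})/C^G_{x_0}(\hat{\alpha})\cong\Gamma$, so even after all such cycles are killed, $P_\beta$ still induces a homomorphism $\Gamma\to D$ coming from cycles that wind nontrivially in $G/\Gamma$; nothing in your argument rules these out. The paper disposes of them with Proposition \ref{latticefin} (triviality of homomorphisms from a higher-rank lattice into $D$, via superrigidity), using the $C^0$-closeness of $\beta$ to a constant to kill any residual finite part --- a hypothesis your proof never invokes, which is itself a warning sign. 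Second, for the contractible cycles, your reduction (``pick one element of $\Z^2$ whose contracting chamber contains all the foliations in the cycle'') only handles \emph{stable} cycles; a general contractible cycle mixes legs that no single element contracts simultaneously, and showing that $C^G_{x_0}/S_{x_0}$ admits no nontrivial homomorphism into a locally compact group is the content of Proposition \ref{minper}, which rests on a Steinberg-type presentation of $G$ rather than on bracket relations of individual root pairs. Moreover, transferring that statement from $\alpha_0$ to $\hat{\alpha}$ is not just an error-absorption estimate: one needs the bijection $C^G_{x_0}(\alpha_0)\leftrightarrow C^G_{x_0}(\hat{\alpha})$ matching stable subgroups (Proposition \ref{bijalpha}). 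Without these two ingredients the well-definedness of your global $H$ is not established.
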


Proposition \ref{gph} is a direct corollary of Theorem \ref{cocrig} applied to the cocycle $\hat{\beta}$ in Equation \eqref{cocbeta}. The proof follows similarly to Theorem 1.1 in \cite{DK}.

\begin{proof}[Proof of Proposition \ref{gph}]
Take $\hat{\beta}$ as in Equation \eqref{cocbeta}. By Proposition \ref{ph} and Theorem \ref{cocrig}, $\hat{\beta}$ is cohomologous to a constant via a H\"older continuous transfer map $H$. Define
\[
h_1: X \to X, \quad h_1(x) = H(x)^{-1} \cdot x.
\]
Then for any $\hat{g} = \hat{\alpha}(a)$, $a \in \Z^2$,
\begin{equation}\label{chomg}
h_1 \circ \hat{g}(x) = s(a) \, \alpha_0(a) \cdot h_1(x), \quad \forall x \in X.
\end{equation}

We claim that $h_1$ is a homeomorphism.
Since we chose $\alpha(a)\in \CZ^r_0(f)$ for any $ a\in \Z^2$, we may lift $h_1$ to the cover $G$. We note that if $h_1(x)=h_1(y)$, then $h_1(h^s(x))=h_1(h^s(y))$ for any $s$-holonomy $h^s$ for some $\hat{g}\in A$. This is because the semiconjugacy maps fixes the center foliations and sends topological stable foliations to stable foliations. Therefore $h_1(h^{(n)}_{x\to y}(x))=h_1(x)$ for any $su$-path of $\hat{f}$ from $x$ to $y$ and any iterate $n$, that is, the whole $h_{x\to y}$ orbit of $x$ is in the preimage $h_1^{-1}(\{h_1(x)\})$ which is bounded. However, since the holonomy group acts freely on a generic center leaf of $G$ by Proposition \ref{hologroup}, that is impossible unless $x=y$. Moreover, by our construction, $h_1$ is bounded from identity when lifted to $G$, since the coboundary $H(x)$ is uniformly bounded. This shows that $h_1$ must also be surjective.

 Then, since $\hat{g}$ is H\"older conjugate to an element of the Weyl chamber flow on $X$ for any $g=\alpha(a)$, in particular, $\hat{f}$, and hence $f$ is also H\"older conjugate to an element of the Weyl chamber flow. We shall show in Theorem \ref{parthyp} that $\alpha$ is partially hyperbolic. 
 
 Therefore, the leaves of coarse Lyapunov foliations of $\alpha$ are $C^\infty$ smooth. Then by standard normal form theory on algebraic manifolds, we may upgrade the conjugacy $h_1$ to being smooth. The smoothness of $h_1$ along the coarse Lyapunov foliations follows as in Step 4 of Section 2.2.3 in \cite{KS}. The global smoothness follows from Theorem 2.1 of \cite{KSpat}. A proof is provided in Appendix \ref{hsmooth}.
\end{proof}

\subsection{Lyapunov cycles in general}

We begin with some analysis of the group of Lyapunov cycles for an action $\alpha$ with topological Lyapunov foliations.

A path in $X$ whose legs lie entirely within topological Lyapunov foliations of $\alpha$ is called a \emph{Lyapunov path} of $\alpha$. We denote such a path by its endpoints $[x_0, x_1, \dots, x_k]$, where $x_i \in \W^j_\alpha(x_{i-1})$ for some $j$ and all $1 \le i \le k$. Here, 
\[
\W^j_\alpha := \bigcap_{a: \chi_j(a) < 0} \W^s_{\alpha(a)}
\] 
are the Lyapunov foliations of $\alpha$.

A Lyapunov path of $\alpha$ is called an \emph{$\alpha$-cycle} if it starts and ends at the same point. We denote by $C_{x_0}(\alpha)$ the set of $\alpha$-cycles based at $x_0$, and by $C^G_{x_0}(\alpha)$ the set of $\alpha$-cycles whose lifting to $G$ remains closed.

A $su$-cycle in $C_{x_0}(\alpha)$ is said to be \emph{stable} if there exists $g \in \mathrm{Im}(\alpha)$ such that the entire path lies in the stable manifold $\W^s_g(x_0)$, generated by $\W^i_\alpha$ with $\chi_i(g) < 0$. We denote by $S_{x_0}(\alpha)$ the closure of the normal subgroup of $C_{x_0}(\alpha)$ generated by stable cycles. Note that stable cycles are contractible (their image under sufficiently large iteration of $g$ lies in a small ball), so we always have
\[
S_{x_0}(\alpha) \subset C^G_{x_0}(\alpha).
\]

\begin{defi}[$su$-cycle functional]
Given a cocycle $\beta$ of a group action $\alpha$ on $X$ with topological Lyapunov foliations, define the $su$-cycle functional
\[
P_\beta: C_{x_0}(\alpha) \to D
\]
by
\[
P_\beta: [x_0, x_1, \dots, x_k = x_0] \mapsto \prod_{i=0}^{k-1} p_\beta(x_i, x_{i+1}),
\]
where 
\[
p_\beta(x, y) = \lim_{n \to \infty} \beta(na, x)^{-1} \beta(na, y)
\]
for $y \in \W^i_\alpha(x)$ and $a \in A$ such that $\chi_i(\alpha(a)) < 0$.
\end{defi}

By H\"older continuity of $\beta$ and exponential contraction for $y\in \W^i_\alpha$, it can be shown that $p_\beta(x,y)$ is well-defined and does not depend on the choice of a contracting $a\in A$. (See Proposition 2 of \cite{DK05}).

The following proposition shows that a cocycle is cohomologous to a constant if and only if its $su$-cycle functional is trivial. The proof is identical to Proposition 4 and Proposition 10 in \cite{DK05}, even though the original statement concerns partially hyperbolic actions rather than actions with ``topological Lyapunov foliations".

\begin{prop}\label{consttrivialfunctional}
Let $\alpha: A \to \mathrm{Diff}(X)$ be an action with accessible topological Lyapunov foliations. Then a H\"older cocycle $\beta: A \times X \to D$ is cohomologous to a constant via a H\"older continuous transfer map $H$ if and only if $P_\beta: C_{x_0}(\alpha) \to D$ is trivial for some $x_0 \in X$.
\end{prop}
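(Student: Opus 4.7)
The plan is to prove the two directions separately, essentially following the scheme of Propositions 4(1) and 10(1) in \cite{DK05} but replacing their use of stable/unstable foliations by our ``topological Lyapunov foliations,'' relying on transitivity (in place of accessibility) of these foliations to connect points.

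For the ``only if'' direction I would first verify that $p_\beta$ admits a clean expression when $\beta$ is a coboundary. Writing $\beta(a,x) = H(\alpha(a)x)\,s(a)\,H(x)^{-1}$ and using that $D$ is abelian so $s(na)$ cancels, one gets
\begin{equation*}
p_\beta(x,y) = \lim_{n\to\infty} H(x)\,H(\alpha(na)x)^{-1}H(\alpha(na)y)\,H(y)^{-1} = H(x)H(y)^{-1},
\end{equation*}
where the last equality uses continuity of $H$ together with the fact that $d(\alpha(na)x,\alpha(na)y)\to 0$ along an $\W^i_\alpha$-leg with $\chi_i(\alpha(a))<0$. The $P_\beta$ product then telescopes around any closed Lyapunov path and gives the identity.

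For the ``if'' direction, assume $P_\beta$ vanishes on $C_{x_0}(\alpha)$. I would define the candidate transfer map by picking any Lyapunov path $\gamma=[x_0,y_1,\dots,y_k=y]$ from $x_0$ to $y$ and setting
\begin{equation*}
H(y) := \prod_{i=0}^{k-1} p_\beta(y_i,y_{i+1}).
\end{equation*}
Transitivity of the Lyapunov foliations ensures such a $\gamma$ exists for every $y\in X$, and triviality of $P_\beta$ ensures $H(y)$ does not depend on the choice of $\gamma$ (the difference between any two paths is an element of $C_{x_0}(\alpha)$). The cocycle identity for $\beta$ together with the definition of $p_\beta$ as a limit gives, for every $a\in A$ and every leg $[y_i,y_{i+1}]$, that the functional $P_\beta$ applied to the closed path consisting of the leg $[y_i,y_{i+1}]$ followed by the $\alpha(a)$-image of $[y_{i+1},y_i]$ forces $p_\beta(\alpha(a)y_i,\alpha(a)y_{i+1}) = \beta(a,y_{i+1})\,p_\beta(y_i,y_{i+1})\,\beta(a,y_i)^{-1}$. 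Multiplying these identities along $\gamma$ and using that $\beta(a,x_0)$ only depends on $a$, one obtains a constant $s(a)\in D$ such that $\beta(a,y) = H(\alpha(a)y)\,s(a)\,H(y)^{-1}$; the cocycle relation for $\beta$ then forces $s$ to be a homomorphism.

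The main obstacle I expect is the continuity of $H$. The definition is clearly H\"older along each leaf of $\W^i_\alpha$, with H\"older constant controlled by the Lyapunov contraction/expansion rate (this is where H\"olderness of $\beta$ is used in the definition of $p_\beta$ as a uniformly convergent limit). Extending this to genuine continuity on $X$ requires the same kind of uniform ``local holonomy'' control used in accessibility arguments: given $y'$ close to $y$, one needs to connect $y'$ to $y$ by a Lyapunov path of uniformly bounded total length whose legs vary continuously with $y'$, so that $H(y')=H(y)\cdot\prod p_\beta(\cdot,\cdot)$ with a small correction. For the actions considered in the paper this follows from Lemma \ref{ctshol} exactly as in the proof of Proposition 4(1) in \cite{DK05}, and this is the step where the argument has to be written out most carefully since the original reference is stated for partially hyperbolic actions rather than actions with merely topological Lyapunov foliations.
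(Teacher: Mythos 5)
Your proposal is correct and follows essentially the same route as the paper, which simply defers to Propositions 4(1) and 10(1) of \cite{DK05}: compute $p_\beta(x,y)=H(x)H(y)^{-1}$ for a coboundary in one direction, and in the other define $H$ by integrating $p_\beta$ along Lyapunov paths, using triviality of $P_\beta$ for path-independence and transitivity for existence, with continuity of $H$ as the one step genuinely needing the local-holonomy control you identify. (Your equivariance identity has the conjugating factors $\beta(a,y_i)$ and $\beta(a,y_{i+1})$ transposed relative to the paper's cocycle convention, but since $D$ is abelian this only replaces $H$ by $H^{-1}$ and is harmless.)
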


By definition, any stable cycle lies in the kernel of the $su$-cycle functional:

\begin{lem}
We have 
\[
S_{x_0}(\alpha) \subset \ker P_\beta
\]
for any H\"older cocycle $\beta$ of $\alpha$.
\end{lem}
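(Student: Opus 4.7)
The plan is to prove the lemma by a direct telescoping argument, exploiting the fact that the target group $D$ is abelian and that on a stable cycle a single element of $A$ simultaneously contracts every leg.

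First I would fix a stable cycle $c=[x_0,x_1,\ldots,x_k=x_0]$, so by definition there exists $a_0\in A$ with $g=\alpha(a_0)$ such that every leg $[x_i,x_{i+1}]$ lies in some Lyapunov foliation $\W^{j_i}_\alpha$ with $\chi_{j_i}(\alpha(a_0))<0$. Because the entire cycle is contained in $\W^s_g(x_0)$, iteration of $g$ contracts each leg exponentially fast. Since $\beta$ is H\"older and takes values in a finite-dimensional Lie group $D$, the expression $\beta(na_0,x_i)^{-1}\beta(na_0,x_{i+1})$ converges as $n\to\infty$, so the individual limits
\[
p_\beta(x_i,x_{i+1})=\lim_{n\to\infty}\beta(na_0,x_i)^{-1}\beta(na_0,x_{i+1})
\]
all exist and can be computed using the \emph{same} element $a_0$. (One should remark here that although the definition of $p_\beta(x,y)$ a priori depends on a choice of $a$ with $\chi_i(\alpha(a))<0$, the limit is independent of this choice, so using $a_0$ uniformly on all legs does not change the value of $P_\beta(c)$.)

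Next I would use that $D$ is abelian to interchange the product over legs with the limit, and telescope:
\[
P_\beta(c)=\prod_{i=0}^{k-1}p_\beta(x_i,x_{i+1})
=\lim_{n\to\infty}\prod_{i=0}^{k-1}\beta(na_0,x_i)^{-1}\beta(na_0,x_{i+1})
=\lim_{n\to\infty}\beta(na_0,x_0)^{-1}\beta(na_0,x_k),
\]
which equals $e$ since $x_k=x_0$. Therefore every stable cycle lies in $\ker P_\beta$.

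Finally, to upgrade this from stable cycles to the full group $S_{x_0}(\alpha)$, I would check that $\ker P_\beta$ is a closed normal subgroup of $C_{x_0}(\alpha)$: normality is automatic because $D$ is abelian and $P_\beta$ descends to a homomorphism into $D$, and closedness follows from the continuous dependence of the cocycle $\beta$ and hence of $p_\beta(x,y)$ on its endpoints together with continuity of multiplication in $D$. Once $\ker P_\beta$ is seen to be closed and normal, it must contain the closed normal subgroup generated by stable cycles, which is exactly $S_{x_0}(\alpha)$. The only subtlety that could cause trouble is the uniformity of the convergence in the telescoping step — if the legs $[x_i,x_{i+1}]$ vary, one needs uniform H\"older control — but for a fixed cycle this is not an issue, and for the density/closure step it is handled by continuity of $p_\beta$ on compact sets.
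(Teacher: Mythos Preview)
Your argument is correct and is exactly the telescoping computation the paper has in mind when it says ``by definition of the $su$-cycle functional, any stable cycle must be in the kernel of the functional'' before citing Vinhage. The paper gives no further detail, so you have simply spelled out what ``by definition'' means here; the independence of $p_\beta$ from the choice of contracting element and the passage to the closed normal subgroup are the only points needing comment, and you handle both.
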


\begin{proof}
For a stable path $\gamma = [x_0, x_1, \dots, x_k = x_0] \in S_{x_0}(\alpha)$, pick $a \in \Z^2$ such that $x_{i+1} \in \W^s_a(x_i)$ for all $0 \le i \le k-1$. By definition of the $su$-functional, we have
\[
 p_\beta(\alpha(a) x_i, \alpha(a) x_{i+1}) =\beta(a,x_i) p_\beta(x_i, x_{i+1})\beta(a,x_{i+1})^{-1}  \quad \forall i.
\]
Thus,
\[
P_\beta(\gamma) = P_\beta(\alpha(a)\gamma) = P_\beta(\alpha(na)\gamma) \quad \forall n \in \N.
\]
Since $\beta$ is continuous and $d(\alpha(na)x_i, \alpha(na)x_{i+1}) \to 0$ as $n \to \infty$, we have $P_\beta(\alpha(na)\gamma) \to id$. Therefore, $\gamma \in \ker P_\beta$, and since $S_{x_0}(\alpha)$ is generated by all stable cycles, we obtain
\[
S_{x_0}(\alpha) \subset \ker P_\beta.
\]
\end{proof}

Hence, $P_\beta: C_{x_0}(\alpha) \to D$ factors through the quotient $C_{x_0}(\alpha)/S_{x_0}(\alpha)$.

\subsection{Lyapunov cycles of $\alpha_0$ and $\hat{\alpha}$}

To show that the image of $C_{x_0}(\hat{\alpha})/S_{x_0}(\hat{\alpha})$ is small, we compare it with the corresponding set for $\alpha_0$.

It is proved in Section 7 of \cite{VinWang} that the $su$-functional of the contractible cycles of $\alpha_0$ is trivial:

\begin{prop}[Theorem 7.2, \cite{VinWang}]\label{minper}
Let $\alpha_0: \Z^2 \to D \subset \mathrm{Diff}(X)$ be a generic restriction of the Weyl chamber flow on $X = G/\Gamma$. Then
\[
C^G_{x_0}(\alpha_0)/S_{x_0}(\alpha_0)
\]
is \emph{minimally almost periodic}, i.e., it admits no continuous homomorphism into locally compact groups.
\end{prop}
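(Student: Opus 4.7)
My plan is to translate $C^G_{x_0}(\alpha_0)/S_{x_0}(\alpha_0)$ into an algebraic object built from the root subgroups of $G$, then use Steinberg-type commutator relations together with the genuine higher-rank hypothesis to show that the resulting topological group is topologically perfect, and finally promote this to the non-existence of any continuous homomorphism into a locally compact group.

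First I would identify $C^G_{x_0}(\alpha_0)$ with the kernel of the natural continuous homomorphism $\pi \colon F \to G$, where $F$ is the (topological) free product of the root subgroups $G_\lambda$, $\lambda \in \Phi$. Indeed, a Lyapunov path with leg from $y_{i-1}$ to $y_i = g_i \cdot y_{i-1}$ for some $g_i \in G_{\lambda_i}$ lifts to a closed loop in $G$ precisely when $g_k \cdots g_1 = e$, using that the Lyapunov foliations of $\alpha_0$ are exactly the left-coset foliations by the root subgroups. Under this identification, $S_{x_0}(\alpha_0)$ becomes the closed normal subgroup generated by $\ker(\pi|_{F^-(a)})$ as $a$ ranges over $\Z^2$, where $F^-(a)$ is the sub-free-product over $\Phi^-(a) := \{\lambda : \chi_\lambda(\alpha_0(a)) < 0\}$. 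The image of $\pi|_{F^-(a)}$ is the unipotent subgroup $U^-(a) \subset G$, and $\ker(\pi|_{F^-(a)})$ is generated by the nilpotent Steinberg relations
\[
[x_\lambda(t), x_\mu(s)] = \prod_{i,j \ge 1} x_{i\lambda+j\mu}\bigl(N^{\lambda,\mu}_{ij}\,t^i s^j\bigr)
\]
for $\lambda, \mu \in \Phi^-(a)$.

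The key step is then to show that the quotient $Q := C^G_{x_0}(\alpha_0)/S_{x_0}(\alpha_0)$ is topologically perfect, using the genuine higher-rank hypothesis. For any root $\lambda$ in a rank $\ge 2$ simple factor of $\mathfrak{g}$, there exist roots $\mu, \nu \in \Phi$ with $\lambda = \mu + \nu$, and one may choose $a \in \Z^2$ generically so that $\lambda, \mu, \nu \in \Phi^-(a)$. The Steinberg identity above, read in $Q$, expresses each generator $x_\lambda(t) \in G_\lambda$ as a commutator $[x_\mu(\cdot), x_\nu(\cdot)]$ multiplied by factors already killed in $Q$; propagating through the cycle description yields $Q = \overline{[Q,Q]}$, so every continuous homomorphism $\phi \colon Q \to A$ into an abelian locally compact group is trivial. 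To extend to an arbitrary locally compact target $H$, I would argue inductively along the closed solvable radical and the connected-component chain of $\overline{\phi(Q)}$: each successive abelian layer vanishes by topological perfection, and any residual semisimple quotient would restrict to a continuous finite-dimensional representation of the root subgroups compatible with the Steinberg relations in $Q$, which by rigidity of representations of the $\R$-split simple factors of $G$ forces the image to be trivial.

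The main obstacle I anticipate is the passage from the purely algebraic Steinberg relations to relations actually lying in the \emph{closed} normal subgroup $S_{x_0}(\alpha_0)$: one must verify that the Steinberg commutator identities can be realized as $C^0$-limits of stable cycles, uniformly in the root-group parameters. This is precisely where generic choice of $a \in \Z^2$ avoiding Weyl-chamber walls in each simple factor, plus genuine higher rank in every simple factor, are essential. Making this topologically robust, and handling roots $\lambda$ (e.g.\ certain short roots in non-simply-laced systems) that are not literal sums of two other roots, will require a careful case analysis over the irreducible root systems arising in $\mathfrak{g}$.
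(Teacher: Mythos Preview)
The paper does not give its own proof of this proposition; it is quoted directly from \cite{VinWang}. Your outline follows essentially the same strategy used there: identify $C^G_{x_0}(\alpha_0)$ with $\ker(\pi\colon F\to G)$ where $F$ is the topological free product of the root subgroups, identify $S_{x_0}(\alpha_0)$ with the closed normal subgroup generated by the nilpotent (Steinberg-type) relations arising inside each stable subalgebra, and then exploit genuine higher rank to express every root-subgroup element as a commutator modulo those relations. That is the correct framework, and your anticipated obstacle (realizing the Steinberg identities as honest stable cycles, with the attendant case analysis over root systems) is exactly where the work lies in the source.

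The one genuine gap in your plan is the passage from topological perfection of $Q$ to the non-existence of continuous homomorphisms into an \emph{arbitrary} locally compact group. Topological perfection kills only homomorphisms into abelian (hence, by your radical induction, solvable) targets; your ``residual semisimple quotient'' step is not an argument as written, since a topologically perfect group can certainly map nontrivially into a non-abelian simple Lie group. For the application in this paper (Corollary~\ref{trifunc}) the target is the abelian group $D$, so topological perfection already suffices and your argument is enough. If you want the statement in full generality, the route in \cite{VinWang} is not to argue abstractly from perfection but to show more precisely that $Q$ is a quotient of a Steinberg-type central extension whose structure over $\R$ is known, from which minimal almost periodicity follows.
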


Applied to the $su$-cycle functional, we have the following corollary for quotients of simple Lie group.
\begin{cor}\label{trifunc}
 Let $\alpha_0:\Z^2\to D\subset \mathrm{Diff}^\infty(X)$ be a genuinely higher rank restriction of the Weyl chamber flow on $X=G/\Gamma$, then the $su$-cycle functional $P_\beta$ of any cocycle $\beta$ of $\alpha_0$ is trivial on $C^G_{x_0}(\alpha_0)$.
\end{cor}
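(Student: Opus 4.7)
The plan is to realize the periodic cycle functional as a continuous homomorphism into a locally compact group, and then invoke minimal almost periodicity from Proposition \ref{minper} to force it to be trivial.

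First I would verify that $P_\beta : C_{x_0}(\alpha_0) \to D$ is a group homomorphism. This follows from the cocycle equation together with the fact that $D$ is abelian: concatenation of two Lyapunov cycles based at $x_0$ is sent to the product of the two values of $P_\beta$ on each sub-cycle, because each local factor $p_\beta(x,y) = \lim_{n\to\infty}\beta(na,x)^{-1}\beta(na,y)$ transforms multiplicatively under concatenation. Next, since $S_{x_0}(\alpha_0)\subset \ker P_\beta$ by the lemma quoted before the corollary, $P_\beta$ descends to a homomorphism
\[
\bar P_\beta : C_{x_0}(\alpha_0)/S_{x_0}(\alpha_0) \longrightarrow D,
\]
and in particular restricts to a homomorphism $\bar P_\beta : C^G_{x_0}(\alpha_0)/S_{x_0}(\alpha_0) \to D$.

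Next I would check that $\bar P_\beta$ is continuous in the natural topology on cycles (two cycles being close when their corresponding vertices are close in $X$ and their legs have comparable lengths). Hölder continuity of $\beta$, together with exponential contraction along the Lyapunov leaves under suitable elements of $\alpha_0$, guarantees that the limit defining $p_\beta(x,y)$ converges uniformly in $(x,y)$ on bounded pieces of a Lyapunov leaf, so each factor $p_\beta$ depends continuously on its endpoints, and finite products of such factors give a continuous map on cycles of bounded combinatorial complexity. Thus $\bar P_\beta$ is a continuous homomorphism into the diagonal subgroup $D$, which is a connected abelian Lie group and hence locally compact.

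Finally, Proposition \ref{minper} tells us that $C^G_{x_0}(\alpha_0)/S_{x_0}(\alpha_0)$ is minimally almost periodic, meaning every continuous homomorphism from it into a locally compact group is trivial. Applying this to $\bar P_\beta$ yields $P_\beta \equiv e_D$ on all of $C^G_{x_0}(\alpha_0)$, as desired. The step most requiring care is the continuity argument in the middle: one has to topologize the cycle space so that both Proposition \ref{minper} and the continuity of $\bar P_\beta$ hold simultaneously, but since $P_\beta$ is manifestly continuous on cycles of bounded length and complexity and the minimal almost periodicity of Proposition \ref{minper} is stated in precisely this topology, the compatibility is automatic in our setting.
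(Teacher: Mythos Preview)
Your proposal is correct and follows exactly the approach the paper intends: the paper treats Corollary \ref{trifunc} as an immediate consequence of Proposition \ref{minper} applied to the $su$-cycle functional, and you have simply spelled out the details (that $P_\beta$ is a continuous homomorphism which descends modulo $S_{x_0}(\alpha_0)$, hence must be trivial by minimal almost periodicity). The paper gives no separate proof beyond the sentence ``Applied to the $su$-cycle functional, we have the following corollary,'' so your expansion is entirely in line with the paper's own argument.
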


We can also show the same for $\alpha$-cycles.

With some abuse of notation, we lift $\hat{\alpha}$ and $\alpha_0$ to the universal cover of $X$ without changing notations. We define a natural map $P$ from topological Lyapunov paths of $\hat{\alpha}$ to Lyapunov paths of $\alpha_0$, taking an $\hat{\alpha}$ Lyapunov path with endpoints $[x_0,x_1,...,x_k]$ to the unique $\alpha_0$-Lyapunov path $[y_0,y_1,..., y_k]$ such that $y_0=x_0$, $y_k\in \tilde{W}^c_{f_0}(x_k)$ and $y_i\in \tilde{W}_{\alpha_0}^j(y_{i-1})$ if $x_i\in \tilde{W}_{\hat{\alpha}}^j(x_{i-1})$ for any $1\le i \le k$. We may similarly define an inverse map $Q$ from Lyapunov paths of $\alpha_0$ to Lyapunov paths of $\hat{\alpha}$.

Note that a priori, a Lyapunov cycle of $\hat{\alpha}$ may not be mapped to a cycle of $\alpha_0$. However, we notice that any stable cycle of $\hat{\alpha}$ must be mapped to a stable cycle of $\alpha_0$. Thus using Proposition \ref{minper}, we conclude that the contractible $\alpha_0$-cycles are also mapped to contractible $\alpha$-cycles. Proving the statement for the inverse map is harder. The proof can be found in Section 12 of \cite{VinWang}.
\begin{prop}[Theorem 12.2 \cite{VinWang}] \label{bijalpha}
 The canonical maps $P$ and $Q$ are bijections from $C^G_{x_0}(\alpha_0)$ to $C^G_{x_0}(\hat{\alpha})$ and the bijection sends $S_{x_0}(\alpha_0)$ to $S_{x_0}(\hat{\alpha})$.
\end{prop}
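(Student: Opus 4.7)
My plan is to work throughout in the universal cover $G$ (since $C^G_{x_0}$ is defined by the condition that lifts remain closed) and to break the proof into three parts: (i) $P$ and $Q$ are mutually inverse at the level of individual Lyapunov paths, (ii) they preserve the cycle property, and (iii) they match stable cycles. Part (i) is a combinatorial check: a $\hat\alpha$-path $[x_0,\dots,x_k]$ and its image $P([x_0,\dots,x_k]) = [y_0,\dots,y_k]$ have the same sequence of leaf types $(j_1,\dots,j_k)$, and since each $\alpha_0$-leaf $\tilde{\W}^{j_i}_{\alpha_0}(y_{i-1})$ is the coset $G_{j_i} y_{i-1}$ while the center leaves are transverse cosets of $D$, the requirement $y_i \in \tilde{\W}^c_{f_0}(x_i)$ uniquely determines $y_i$ given $y_{i-1}$. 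The map $Q$ is constructed symmetrically and inverts $P$ leg by leg.

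For part (ii), define the \emph{drift functional} $\Delta \colon C^G_{x_0}(\hat\alpha) \to \mathfrak{h}$ by writing the endpoint of $P(\gamma)$ as $y_k = x_0 \exp(\Delta(\gamma))$; this lies in $x_0 D$ by construction, and $\Delta(\gamma) = 0$ means exactly that $P(\gamma)$ is a cycle. I would first check that $\Delta$ is a homomorphism: the image of a concatenation $\gamma_1 \cdot \gamma_2$ equals $P(\gamma_1)$ followed by a $P(\gamma_2)$-shaped path translated to begin at $y_{k_1}$, and since the $\alpha_0$-leaves are left cosets of the root subgroups $G_{j}$ and $D$ is abelian, the endpoints multiply. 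Next, $\Delta$ vanishes on stable cycles: if $\gamma$ lies in a single $\tilde{\W}^s_{\hat\alpha(a)}(x_0)$, then $P(\gamma)$ lies in the corresponding unipotent orbit $U^-_a \cdot x_0$ where $U^-_a = \exp\bigl(\bigoplus_{\chi_j(a)<0} \mathfrak{g}_j\bigr)$; since the root subspaces are transverse to $\mathfrak{h}$, we have $U^-_a \cap D = \{e\}$, so $y_k \in x_0 D \cap U^-_a x_0 = \{x_0\}$, giving $\Delta(\gamma) = 0$. Thus $\Delta$ descends to a continuous homomorphism $C^G_{x_0}(\hat\alpha)/S_{x_0}(\hat\alpha) \to (\mathfrak{h},+)$.

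The main obstacle is then to conclude $\Delta \equiv 0$, and this is where I expect the delicate work to lie. My plan is to leverage Proposition \ref{minper}, which states $C^G_{x_0}(\alpha_0)/S_{x_0}(\alpha_0)$ is minimally almost periodic. To do this I would first extend $P$ to the larger category of \emph{pseudo-cycles}, namely Lyapunov paths whose endpoints lie on a common center leaf; on this larger space $P$ is tautologically well-defined (cycles on one side become pseudo-cycles on the other), and dividing out by stable (pseudo-)cycles gives a natural isomorphism
\[
C^G_{x_0}(\hat\alpha)/S_{x_0}(\hat\alpha) \;\xrightarrow{\;\sim\;}\; C^G_{x_0}(\alpha_0)/S_{x_0}(\alpha_0),
\]
since the discrepancy between being an $\hat\alpha$-cycle versus an $\alpha_0$-cycle lives entirely in the center factor that is killed in the quotient. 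Minimal almost periodicity then forces any continuous homomorphism of the left-hand side into the locally compact abelian group $\mathfrak{h}$ to be trivial, yielding $\Delta \equiv 0$ and hence $P(C^G_{x_0}(\hat\alpha)) \subset C^G_{x_0}(\alpha_0)$. The symmetric argument for $Q$ gives the inverse inclusion, so $P$ and $Q$ restrict to mutually inverse bijections. Finally, part (iii) is built into the argument: stable cycles on either side land inside unipotent orbits on the other, which are contained in stable manifolds of the corresponding generic element, proving $P(S_{x_0}(\hat\alpha)) = S_{x_0}(\alpha_0)$ and completing the proof.
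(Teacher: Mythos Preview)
Your drift-functional approach works cleanly for one direction but not the other, and this asymmetry is exactly what the paper flags. For $Q$, define the analogous drift $\Delta' \colon C^G_{x_0}(\alpha_0) \to \mathfrak h$; your argument that stable cycles land in unipotent orbits (hence close up) is correct on both sides, so $\Delta'$ vanishes on $S_{x_0}(\alpha_0)$, descends to $C^G_{x_0}(\alpha_0)/S_{x_0}(\alpha_0)$, and Proposition~\ref{minper} applies \emph{directly} to force $\Delta' \equiv 0$. This matches the paper's sketch of the easy direction. Your parts (i) and (iii) are also fine.

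The genuine gap is in the $P$-direction, which the paper explicitly says is harder and defers to \cite{VinWang}. Proposition~\ref{minper} concerns $\alpha_0$, not $\hat\alpha$; you would need minimal almost periodicity of $C^G_{x_0}(\hat\alpha)/S_{x_0}(\hat\alpha)$, and your pseudo-cycle bridge does not establish it. The claim that the discrepancy ``lives entirely in the center factor that is killed in the quotient'' begs the question: your own $\Delta$ is a continuous homomorphism from $C^G_{x_0}(\hat\alpha)/S_{x_0}(\hat\alpha)$ into the center direction $\mathfrak h$, so that direction is visibly \emph{not} killed unless $\Delta$ already vanishes. Pseudo-cycles also do not concatenate (endpoints need not match), so the asserted isomorphism of quotients has no formulation that does not presuppose the conclusion. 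Per the remark following the proposition, the argument in \cite{VinWang} for this direction relies on the $C^0$-closeness of the $\hat\alpha$- and $\alpha_0$-foliations, an ingredient your proposal never invokes; that geometric input is what breaks the apparent symmetry between $P$ and $Q$.
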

\begin{rmk}
 We note that in \cite{VinWang} the proof only used the smallness of the $C^0$ distance between the $\hat{\alpha}$-foliations and the $\alpha_0$-foliations, which in our setting is guaranteed by the smallness of $C^1$-distance between $f$ and $f_0$ and $C^0$-smallness of the leaf conjugacy.
\end{rmk}

As a corollary, we see that the holonomy group of $\hat{\alpha}$ is the same as the holonomy group of $\alpha_0$, thus it acts freely and transitively on the center leaves. 
\begin{prop}[Theorem 6.1 \cite{DK}; Corollary 5.6 \cite{Vinhage}]\label{hologroup}
 The group of $su$-holonomies of $\hat{\alpha}$ acts freely and transitively on a generic center leaf $Dx_0$.
\end{prop}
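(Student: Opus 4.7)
The strategy is to reduce the statement to the corresponding fact for the algebraic model $\alpha_0$ using the cycle bijection of Proposition \ref{bijalpha}, following the approach of \cite{DK,Vinhage}. The key principle is that the $su$-holonomy action of a cycle on a center leaf factors through $C^G_{x_0}/S_{x_0}$, since stable cycles act trivially by the uniform contraction along Lyapunov leaves. Thus Proposition \ref{bijalpha} should identify the $\hat{\alpha}$-holonomy group on $Dx_0$ with the $\alpha_0$-holonomy group on $Dx_0$, where the latter is explicitly computable.

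First I would handle the model. Lifting to $G$, the $su$-leaves of $\alpha_0$ are cosets of the root subgroups $G_{\lambda_i} = \exp(\mathfrak{g}_{\lambda_i})$, so every $su$-path based at $x_0$ that returns to $Dx_0$ takes the form $[x_0, g_1 x_0, g_2 g_1 x_0, \ldots, g_k \cdots g_1 x_0]$ with $g_i \in G_{\lambda_i}$ and $d := g_k \cdots g_1 \in D$, and the induced holonomy on $Dx_0$ is left translation by $d$. Because the root subgroups together with $D$ generate $G$ (Iwasawa-type decomposition), every $d \in D$ is realized, so the $\alpha_0$-holonomy group on $Dx_0$ is precisely $D$. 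The action is transitive by construction, and free because $D$ acts freely on $G$ by left translation.

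Next I would transfer to $\hat{\alpha}$. A non-stable $\hat{\alpha}$-$su$-cycle $\eta$ based at $x_0$ maps under $P$ to an $\alpha_0$-path $P(\eta)$ from $x_0$ to some $dx_0 \in Dx_0$, and the correspondence $\eta \mapsto d$ descends to the quotient by $S_{x_0}$ thanks to Proposition \ref{bijalpha}. This yields a well-defined homomorphism from the $\hat{\alpha}$-holonomy group on $Dx_0$ to $D$, which is bijective by the same bijection together with the observation that stable cycles act trivially on the center leaf in both models. Freeness of the $\hat{\alpha}$-action follows: if a non-trivial cycle $\eta$ induced the identity on $Dx_0$, then $P(\eta)$ would end at $x_0$ and be a stable cycle, whence $\eta \in S_{x_0}$. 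For transitivity, given any $y = dx_0 \in Dx_0$, pick an $\alpha_0$-path from $x_0$ to $dx_0$ realizing $d$; its $Q$-image is an $\hat{\alpha}$-path from $x_0$ to some point of $Dx_0$, and the induced homomorphism identifies this endpoint with $y$ via the inverse of the conjugation above, so the $\hat{\alpha}$-holonomy orbit of $x_0$ covers all of $Dx_0$.

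The main obstacle will be the compatibility of Step 2 with the geometric action on $Dx_0$: the map $P$ preserves endpoints only up to the center foliation, so one must carefully argue that the induced endpoint assignment $\eta \mapsto \bar y \in Dx_0$ is the genuine $\hat{\alpha}$-holonomy image of $x_0$. This is where the topological perturbation hypothesis of Proposition \ref{ph} is used: the $C^0$-closeness of the Lyapunov foliations ensures that the endpoint of the $\hat{\alpha}$-path $Q(\gamma_0)$ depends continuously on $\gamma_0$ and varies over a group-closed subset of $Dx_0$ that, via the bijection with the $D$-action of the model, must in fact be all of $Dx_0$.
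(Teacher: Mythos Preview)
Your overall strategy coincides with the paper's: it too treats this proposition as a consequence of the cycle bijection (Proposition~\ref{bijalpha}), reducing to the algebraic model where the holonomy group is $D$ acting by left translation, and otherwise defers to \cite{DK,Vinhage}. Your computation of the $\alpha_0$-holonomy on $Dx_0$ is correct and matches what is implicit there.

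However, your freeness argument contains a genuine gap. You write that if a non-trivial cycle $\eta$ induced the identity on $Dx_0$, then $P(\eta)$ would end at $x_0$ ``and be a stable cycle, whence $\eta\in S_{x_0}$.'' The first clause is fine (Proposition~\ref{bijalpha} sends $\hat\alpha$-cycles to $\alpha_0$-cycles), but the second is unjustified: landing in $C^G_{x_0}(\alpha_0)$ does not put $P(\eta)$ in $S_{x_0}(\alpha_0)$, and Proposition~\ref{minper} only says $C^G_{x_0}/S_{x_0}$ is minimally almost periodic, not trivial. More importantly, the logical direction is inverted: freeness requires showing that \emph{every} $\hat\alpha$-cycle has trivial holonomy on $Dx_0$, not that cycles with trivial holonomy lie in $S_{x_0}$.

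The underlying issue is that Proposition~\ref{bijalpha} as stated identifies cycle groups and stable-cycle subgroups, but you are using it as if it intertwined the holonomy \emph{actions} on $Dx_0$. That stronger statement---that the endpoint map $\eta\mapsto h^{\hat\alpha}_\eta(x_0)$ corresponds under $P,Q$ to $\gamma\mapsto h^{\alpha_0}_\gamma(x_0)$---is exactly what is needed, and it does not follow formally from the bijection on cycles alone. You flag this in your last paragraph (``$P$ preserves endpoints only up to the center foliation''), but the continuity/closedness sketch there does not close the gap. This is precisely the content that the paper outsources to \cite{DK} (Theorem~6.1) and \cite{Vinhage} (Corollary~5.6), where the path-group structure is analyzed directly; to make your argument self-contained you would need to reproduce that step.
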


Therefore, the $su$-cycle functional $P_\beta$ factors through $C_{x_0}/ C^G_{x_0}=\Gamma$. However, since $G$ is semisimple with factor of rank at least $2$, it has property $(T)$, and any lattice $\Gamma$ must map virtually trivially into $D$.

\begin{prop}\label{latticefin}
 Let $\Gamma$ be a cocompact lattice in a semisimple, $\R$-split Lie group $G$ with simple components of rank at least two, then any homomorphism $\Gamma\to D$ must be virtually trivial.
\end{prop}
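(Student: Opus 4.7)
The plan is to invoke Kazhdan's property (T) for the higher-rank lattice $\Gamma$. The first step is to observe that $G$ is a simple Lie group of real rank at least $2$, so by Kazhdan's theorem $G$ has property (T); since $\Gamma$ is a lattice in $G$, property (T) passes to $\Gamma$.

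Next, I would appeal to the Delorme--Guichardet theorem, which gives the cohomological characterization of property (T): $H^1(\Gamma,\pi)=0$ for every unitary representation $\pi$. Applying this with $\pi$ the trivial representation on $\R^k$ yields $\mathrm{Hom}(\Gamma,\R^k)=0$. Since $G$ is $\R$-split, the Cartan subgroup $D=\exp(\mathfrak{h})$ is a vector group, i.e., isomorphic as a Lie group to $\R^k$. Hence every homomorphism $\rho:\Gamma\to D$ is trivial.

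An alternative approach bypasses property (T) and uses the Margulis Normal Subgroup Theorem: since $G$ is simple of real rank $\geq 2$, every normal subgroup of the lattice $\Gamma$ is finite or of finite index. The kernel $\ker\rho$ is normal in $\Gamma$, so either (a) $\ker\rho$ has finite index, in which case the image $\rho(\Gamma)\subset D$ is finite, and must be trivial because $D\cong\R^k$ is torsion-free; or (b) $\ker\rho$ is finite, which would make $\Gamma$ virtually abelian, contradicting the non-amenability of $\Gamma$ inherited from the non-amenable group $G$. Only case (a) is possible, giving the result.

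No significant obstacle is expected; both arguments are classical applications of higher-rank rigidity, and the key input in either case (property (T), or the Margulis Normal Subgroup Theorem) is the real-rank-at-least-$2$ hypothesis on $G$, together with the torsion-freeness of $D\cong\R^k$.
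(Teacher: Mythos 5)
Your proof is correct, and it reaches the conclusion by a genuinely different (though closely allied) route from the paper. The paper's justification is a one-line appeal to Margulis superrigidity, from which one extracts the standard corollary that a higher-rank lattice has finite abelianization, and then uses that $D=\exp(\mathfrak{h})\cong\R^k$ is torsion-free. Your first argument replaces superrigidity with property (T) plus Delorme--Guichardet: this is the most economical input for exactly this statement, since vanishing of $H^1(\Gamma,\C)=\mathrm{Hom}(\Gamma,\C)$ for the trivial representation is precisely the finiteness of $\mathrm{Hom}(\Gamma,\R^k)$ that is needed, and it avoids invoking the full strength of superrigidity. Your second argument via the Normal Subgroup Theorem is also valid; the only cosmetic point is that a finite kernel makes $\Gamma$ finite-by-abelian rather than literally ``virtually abelian,'' but such a group is amenable, which is all your contradiction with the non-amenability of the lattice requires. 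Both of your arguments, like the paper's, hinge on the real-rank-at-least-$2$ hypothesis together with the torsion-freeness of the vector group $D$, so nothing is lost or gained in generality; what the property (T) route buys is a self-contained cohomological statement matched exactly to the claim, while the paper's citation of superrigidity keeps the exposition aligned with the Margulis-style rigidity machinery used elsewhere in Section 7.
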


Therefore, Theorem \ref{cocrig} follows as a direct consequence of the propositions above. 

\begin{proof}[Proof of Theorem \ref{cocrig}]
 By Corollary \ref{trifunc}, Proposition \ref{bijalpha} and Proposition \ref{latticefin}, the image of the $su$-cycle functional must be virtually trivial. Therefore by Proposition \ref{consttrivialfunctional} the $su$-functional of the cocycle $\beta$ must be trivial up to a constant power, but since $\beta$ is $C^0$ close to a constant, the image of the $su$-functional is also $C^0$ close to identity, so it must be trivial, and thus $\beta$ must be cohomologous to the constant.
\end{proof}

\subsection{The action $\alpha$ is partially hyperbolic}\label{alphaph}

In this section, we prove the following.
\begin{thm}\label{parthyp}
 Let $f\in \mathrm{Diff}^\infty(X)$ be a $C^1$-perturbation of a generic element of the Weyl chamber flow. Let $\alpha:A\times X\to X$ be a smooth abelian action containing $f$, that is conjugate to a restriction of the Weyl chamber flow $\alpha_1: A\times X\to X$ via a  bi-H\"older conjugacy map $h$, which sends $\W^c_{\alpha_1}$ to $\W^c_{\alpha}$ and is homotopic to the identity. Then an element of $\alpha$ is partially hyperbolic if the corresponding element of $\alpha_1$ is a generic element of the Weyl chamber flow.
\end{thm}

\begin{proof}

 Let $h$ be the bi-H\"older conjugacy. For $g=\alpha(a)$, we define the topological stable foliation of $g$ to be 
 $$\W^{s,\#}_g=h(\W^s_{\alpha_1(a)}).$$ (In fact, if $\alpha_1(a)$ is not close to any of the Weyl chamber wall, then these foliations agree with $\langle \W^i_\# : \chi_i(\alpha_1(a))<0 \rangle,$
 which is the joint integration of the topological fine stable foliations of $g$ given in Section \ref{sec:choicealphafolia}.) Similarly, we define $\W^{u,\#}_g$ to be the topological unstable foliations of $g$.

 Let $\chi=\frac{\eta_1}{2} \cdot \chi(\alpha_1(a))$ where $\eta_1<1$ is bi-H\"older exponent of $h$, and $\chi(\alpha_1(a))<0$ is the contraction rate of $\alpha_1(a)$ in its stable bundle. Then by the H\"older conjugacy, we have for any $y\in \W^{s,\#}_g(x)$, \begin{equation}\label{equ:gcontract}
 d(g^n(x),g^n(y))\le e^{\chi n} d(x,y)^{\eta_1^2} 
 \end{equation} when $n\gg1$.

 Moreover, the H\"older conjugacy gives the following dynamical characterization of the topological stable and unstable foliations of $g$: 
 
 \begin{equation}\label{equ:dyncha}
 \begin{split}
 \W^{s,\#}_g(x)=\{y\in X: \limsup_{n\to \infty } \;d(g^n(x),g^n(y))=0\}\\
 \W^{u,\#}_g(x)=\{y\in X: \limsup_{n\to -\infty }\;d(g^n(x),g^n(y))=0\}
 \end{split}
 \end{equation}

 Furthermore, by the H\"older conjugacy, and using that $\alpha_1(a)$ is an isometry along $\W^c_{f_0}$, for any $x\in X,\,y\in \W^c_f(x)$, there exists $C>1$ that depends on $x,y$, such that \begin{equation}\label{equ:cendyn}
     \frac{1}{C}\le d(g^n(x),g^n(y))\le C,\quad  \forall n\in \Z.
 \end{equation}
 \color{black}

\textbf{Step 1:} We first show that the foliation $\W^{s,\#}_g(x)$ agrees with the Pesin stable foliation at any $x\in X$ that is Pesin-regular for an ergodic measure $\nu$ of $g$. 

Let $\nu$ be an ergodic Borel measure of $g$ and let $W^{s,\nu}_g, W^{u,\nu}_g$ be the Pesin stable and unstable foliations corresponding to $\nu$ as given in Theorem \ref{pesin}. By Equation \ref{equ:gcontract} and the dynamical characterization of Pesin foliations, we have that at every $\nu$-generic point, the topological foliation $\W^{s,\#}_g$ subfoliates $\W^{s,\nu}_g$, $\W^{u,\#}_g$ subfoliates $\W^{u,\nu}_g$. 

Moreover, the Lyapunov exponent of $g$ along $\W^c_f$ is zero. Indeed, $g$ is H\"older conjugate to $\alpha_1(a)$, which is a translation along the center, so we have $d_{\W^c_f}(g^n(x),g^n(y))$ is uniformly bounded above and below as $|n|\to \infty$ for any $y\in \W^c_f(x)$, $y\neq x$. Therefore, for any $g$-ergodic measure $\nu$, the Lyapunov exponent of $Dg|_{E^c_f}$ is zero.

Therefore, comparing the dimensions of the foliations, we see that 
$$\W^{s,\#}_g = \W^{s,\nu}_g$$
for $\nu$-generic $x\in X$.

\textbf{Step 2:} We show the Lyapunov exponents along $\W^{s,\#}_g$ are $\le \chi$ for every $g$-ergodic measure.

By Equation \ref{equ:gcontract}, for any $x, y \in \W^{s,\#}_g$, we have 
$$d(g^n(x), g^n(y)) \le e^{n(\chi+\epsilon)}$$ 
for $\epsilon \ll 1$ and $n > N(x,y)$. On the other hand, let $\beta_i$ be a Lyapunov exponent of $g$ with respect to any ergodic measure $\nu$, with corresponding Oseledets bundle $E^i_g(x)$. Then, by the graph transformation argument in \cite[Theorem 3.16]{PuShergo}, there exist disks $D_i(x) \subset \W^{s,\nu}_g$ with $T_x D_i = E^i_g(x)$ and $N > 0$ such that 
$$e^{(\beta_i-\epsilon)n} \le d(g^n(x), g^n(y)) \le e^{(\beta_i+\epsilon)n}$$ 
for $n > N$ and $y \in D_i(x)$. Therefore, we have $\chi + \epsilon \ge \beta_i - \epsilon$, which implies 
$$\beta_i \le \chi + 2\epsilon$$ 
for any Lyapunov exponent $\beta_i$ along $\W^{s,\#}_g(x)$ and any $\epsilon > 0$.

\textbf{Step 3:} We use normal form theory to construct a continuous bundle $E^s_g$ tangent to $\W^{s,\#}_g$.

We consider the finest dominated splitting of $f_0$ (which may be bigger than the splitting of the root spaces) which is $$TX=E^1_{f_0}\oplus E^2_{f_0}\oplus \cdots \oplus E^c_{f_0}\oplus \cdots \oplus E^t_{f_0}.$$

Since $f$ is a $C^1$-small perturbation of $f_0$, it admits a corresponding dominated splitting, $$TX=E^1_f\oplus E^2_f\oplus \cdots \oplus E^c_f\oplus \cdots \oplus E^t_f.$$ The dominated splitting is preserved by $g$ that commutes with $f$. The non-central bundles $E^i_f, 1\le i\le t$ are continuous, and integrate to foliations $\W^i_f$ with smooth leaves. Moreover, $Df|_{E^i_f}$ satisfy the narrow band spectrum (see Section \ref{sec:normalform}) with critical regularity $<2$, since $f$ is a $C^1$-small perturbation of $f_0$.

Since $g$ is smooth and commutes with $f$, $g$ preserves the foliation $\W^i_f$. Therefore, by Theorem \ref{normalform}, there exist normal form coordinates $H_x:\W^i_f(x)\to E^i_f(x)$, such that $Q_x:=H_{g(x)}\circ g\circ H^{-1}_x$ is a linear map.

We denote $Q^{(n)}_x=H_{g^n(x)}\circ g^n\circ H^{-1}_x$, and define the subbundle $$E^{i,s}_{f,g}(x)=\{v\in E^i_f(x): \lim_{n\to \infty} \|Q^{(n)}_x(v)\|=0\}.$$ Then by the dynamical characterization of the foliations, we have $$H_x^{-1}(E^{i,s}_{f,g}(x))=\W^{s,\#}_g(x,loc)\cap \W^i_f(x,loc).$$ 
Since $\W^{s,\#}_g$ has continuous leaves, $\W^{s,\#}_g(x)\cap \W^i_f(x,loc)$ has constant topological dimension, and $H_x$ depends $C^r$ continuously on $x$, this shows that $E^{i,s}_{f,g}(x)$ is a continuous $g$-invariant bundle. 

Let $E^s_g=\oplus_{i=1}^t E^{i,s}_{f,g}$, then $E^s_g$ is a continuous $g$-invariant bundle, tangent to $\W^{s,\#}_g(x)$ at every $x\in X$.

\textbf{Step 4:} We show that contraction of $Dg|_{E^s_g}$ is uniform, then give similar estimates to the center and unstable bundle, and conclude that $g$ is partially hyperbolic.

We apply the following lemma given by the classical estimates on subadditive sequences. (See e.g. \cite{SCHREIBER1998334}.)

\begin{lem}\label{lem:subadd}
    Let $f:X\to X$ be a continuous map of a compact metric space, and let
$F:E\to E$ be a continuous linear cocycle over $f$, where
$p:E\to X$ is a continuous vector bundle over $X$.

Assume that for \emph{every} $f$-invariant ergodic probability measure $\nu$,
the top Lyapunov exponent of $F$ is
$\le \lambda .$
Then for every $\epsilon>0$, there exists $N\in \mathbb \N$ such that
\[
\|F^n(x)\|\leq e^{n(\lambda+\epsilon)}
\qquad \text{for all } x\in X, n\ge N.
\]
\end{lem}

By Step 2, the Lyapunov exponent of $Dg|_{E^s_g}$ is uniformly bounded by $\chi<0$ for any ergodic measure $\nu$ of $g$. Therefore, Lemma \ref{lem:subadd} shows that for any $\epsilon \ll 1$, there exists $N > 1$ such that
$$\|Dg^n \mid_{E^{s}_g(x)}\|\le e^{(\chi+\epsilon) n}$$
for any $n \ge N$ and $x \in X$. 

The same argument can be applied to $g^{-1}$ to construct $E^u_g$ and prove uniform expansion along $E^u_g$.

For $E^c_f$, the same argument as in Step 2 shows that the Lyapunov exponent of $Dg|_{E^c_f}$ is zero for any $g$-ergodic measure $\nu$: otherwise, by the same reasoning as in Step 2, a non-zero Lyapunov exponent implies exponential contraction or expansion along a submanifold of a center leaf, which contradicts the dynamical characterization given in Equation \ref{equ:cendyn}. Next, Lemma \ref{lem:subadd} shows that for any $\epsilon>0$, there exists $N>1$ such that $$\|Dg^n \mid_{E^{c}_f(x)}\|\le e^{\epsilon |n|}$$
for any $n\in \Z,\,|n| \ge N$ and $x \in X$. 

\color{black}

Moreover, the dimensions of the bundle agree with the topological dimension of the foliations, which add up to be ambient dimension, so we have a $g$-invariant splitting $$TX=E^s_g\oplus E^c_f\oplus E^u_g.$$

Combining with the uniform estimates on $Dg^N$ restricted to these bundles, this shows that $g$ is partially hyperbolic.
\end{proof}

\appendix
\color{black}
\section{Upgrading smoothness of the conjugacy}\label{hsmooth}

In this section, we prove that the conjugacy between $f$ and an element of the Weyl chamber flow given in Equation \ref{chomg} is smooth. 

For simplicity, we give the argument for an $\R^2$ action $\alpha$ instead of the $\Z^2$-action considered in Section \ref{cocrig}. The $\Z^2$-case follows by applying the \(\mathbb R^2\)-result to the
suspension of the \(\mathbb Z^2\)-action; see the discussion preceding
Corollary 4 in \cite{KSpat}.

\textbf{Step 1:} We first show that $h_1$ restricted to the coarse Lyapunov foliations of $\alpha$ and $\alpha_1$ is $C^\infty$, following Step 4 of Section 2.2.3 in \cite{KSpat}.

By \cite[Theorem 3]{Moore}, since $\alpha_1$ is genuinely higher rank, it is ergodic with respect to the Lebesgue measure on $X$. Let $\mu=(h_1^{-1})_*(\mathrm{vol})$, we see that $\alpha(a)$ is ergodic with respect to $\mu$ for any $a\neq 0$.

For a coarse Lyapunov foliation $\W^j_0=\cap \W^s_{\alpha_1(a)}$ which is the minimal intersection of stable foliations of elements in $\alpha_1$, there exists some $b\in \R^2$ such that $\alpha_1(tb)$ has Lyapunov exponent zero along $\W^j_0$. Actually, we know that $\alpha_1(tb)$ induces isometries between the leaves $\W^j_0(x)$ and $\W^j_0(\alpha_1(tb)(x))$. For a.e. $x\in X$, and any $y\in \W^j_0(x)$, there exists a subsequence such that $\alpha_1(t_n b)(x)\to y$ in $X$, as $n\to \infty$. We may pass to a subsequence and assume that the isometries $\alpha_1(t_n b): \W^j_0(x)\to \W^j_0(\alpha_1(tb)(x))$ converge to an isometry $\W^j_0(x)\to \W^j_0(y)$. We take $G_x$ to be the closure of the group generated by the set of isometries from $\W^j_0(x)\to \W^j_0(x)$ that are limits of $\alpha_1(t_n b)$. Then we see that $G_x$ acts transitively on $\W^j_0(x)$.

After conjugating with $h_1$, we see that $h_1^{-1}G_x h_1$ also acts on the coarse Lyapunov foliations $\W^j(h_1^{-1}(x))=\cap \W^s_{\alpha(a)}(h_1^{-1}(x))$ transitively, and we shall show that for all elements of $G_x$ the action is smooth. Let $g_n=h_1^{-1}\alpha_1(t_n b)h_1$, and suppose $\alpha_1(t_n b)$ converges to $T\in G_x$. Then we have $g_n$ are smooth isometries and converge to $h_1^{-1}Th_1$ in $C^0$-topology. On the other hand, since $f\in \alpha$ satisfies the narrow band spectrum when restricted to $\W^j$, and $g_n$ commutes with $f$, by Theorem \ref{normalform}, under the normal form coordinates, $g_n$ are polynomials of bounded degree and coefficients. Therefore, their $C^0$ limit $h_1^{-1}Th_1$ is also a polynomial. This shows that $h_1^{-1}G_x h_1$ that is generated by such $h_1^{-1}Th_1$ also consists of polynomials under the normal form coordinates, which implies they are $C^\infty$-smooth actions on $\W^j(h_1^{-1}(x))$.

For $y\in X$ that is a limit of $\mu$-regular points $x_n$ in $X$, we define $G_y: \W^j_0(y)\to \W^j_0(y)$ as the $C^0$-limit set of $T_n\in G_{x_n}$. Then since $h_1^{-1}G_{x_n}h_1$ act on $\W^j(x_n)$ by polynomials with bounded degree under the normal form coordinates, the group $h_1^{-1}G_y h_1$ also consists of polynomials, and hence $h_1^{-1}G_y h_1$ are $C^\infty$ diffeomorphisms acting transitively on $\W^j(h_1^{-1}(y))$. The action of $G_y$ on $\W^j_0(y)$ is also $C^\infty$, following the same reasoning.

This shows that for any $y\in X$, $h_1: \W^j(h_1^{-1}(y)) \to \W^j_0(y)$ is $C^\infty$, since it is a conjugacy between the smooth transitive group actions by $h_1^{-1}G_y h_1$ and $G_y$. 

\textbf{Step 2:} We show that $h_1$ is smooth in $X$. In Step 1, we showed that $h_1$, and correspondingly the transfer function $H:X\to D$, are smooth along each of the coarse Lyapunov foliations. We then apply the following theorem from \cite{KSpat}.

\begin{thm}[Theorem 2.1 \cite {KSpat}]
Let $D_1,D_2,...,D_k$ be $C^\infty$ plane fields on a manifold $M$. Suppose the Lie bracket of $\sum D_i$ generates $T_x M$ at every $x\in M$, and for any $j\in \N$, the dimension of the space spanned by the commutators of length $\le j$ is locally constant. Let $P$ be a distribution on $M$ such that for any $C^\infty$ vector field $X$ tangent to any $D_i$, $X^p(P)$ is continuous or locally $L^2$ for any $p\in \N$. Then $P$ is $C^\infty$ smooth on $M$.
\end{thm}

Applying this result to the tangent spaces of the coarse Lyapunov foliations of $\alpha_1$ and the function $H:X\to D$, we see that $H$, and hence $h_1$, is $C^\infty$ smooth on $X$.

\bibliography{cenrig}{}
\bibliographystyle{acm}

\end{document}